\documentclass{article}
\usepackage{amsmath,amsthm,amssymb,amscd}
\usepackage{tikz}
\usepackage[all,cmtip,color]{xy}
\usepackage[german,english]{babel}
\usepackage{amsmath}
\usepackage{amssymb}
\usepackage{hyperref}
\usepackage{mathrsfs} 
\usepackage{amssymb}
\usepackage{todonotes}
\usepackage{amssymb}
\usepackage[all,cmtip]{xy}
\usepackage{hyperref}
\usepackage[margin = 2.5 cm]{geometry}
\usepackage{colonequals,enumerate}
\usepackage{comment}
\usepackage{bbm}

\usepackage{authblk}
\usepackage{ dsfont }

\usepackage[export]{adjustbox}
\usepackage{graphicx}

\usepackage{enumitem}

\setlist[enumerate,1]{label={(\roman*)}}
\setlist[enumerate,2]{label={(\alph*)}}

\DeclareMathOperator{\Rb}{\mathbb{R}}
\DeclareMathOperator{\Cb}{\mathbb{C}}
\DeclareMathOperator{\Fb}{\mathbb{F}}
\DeclareMathOperator{\Xc}{\mathcal{X}}

\DeclareMathOperator{\Oc}{\mathcal{O}}

\newcommand{\Cc}{\mathcal{C}}
\newcommand{\Gb}{\mathbb{G}}

\newcommand{\Dc}{\mathcal{D}}

\DeclareMathOperator{\N}{\mathsf{N}}

\DeclareMathOperator{\Hom}{\mathsf{Hom}}

\DeclareMathOperator{\St}{\mathsf{st}}

\DeclareMathOperator{\sm}{sm}

\DeclareMathOperator{\IC}{IC}
\DeclareMathOperator{\CO}{\mathcal{O}}

\DeclareMathOperator{\inv}{\mathsf{inv}}

\DeclareMathOperator{\A}{{\mathsf{A}}}

\DeclareMathOperator{\colim}{\mathsf{colim}}
\renewcommand{\lim}{\mathsf{lim}}

\newcommand{\C}{\mathsf{C}}

\newcommand\varto[1]{\mathrel{\hbox to #1pt{\rightarrowfill}}}

\DeclareMathOperator{\Mod}{\mathsf{Mod}}

\DeclareMathOperator{\Nc}{\mathsf{N}}

\DeclareMathOperator{\Sb}{\mathbb{S}}

\DeclareMathOperator{\Lie}{Lie}

\DeclareMathOperator{\Ab}{\mathbb{A}}

\DeclareMathOperator{\Zb}{\mathbb{Z}}

\DeclareMathOperator{\Xb}{\mathbb{X}}
\DeclareMathOperator{\Mb}{\mathbb{M}}

\DeclareMathOperator{\Gal}{Gal}
\DeclareMathOperator{\Aut}{\mathsf{Aut}}
\DeclareMathOperator{\Map}{\mathsf{Map}}
\DeclareMathOperator{\G}{\mathbb{G}}

\DeclareMathOperator{\GL}{GL}
\DeclareMathOperator{\PGL}{PGL}

\DeclareMathOperator{\Sp}{Sp}
\DeclareMathOperator{\M}{{\mathsf{M}}}
\DeclareMathOperator{\Hs}{{\mathsf{H}}}

\DeclareMathOperator{\X}{\mathcal{X}}

\newcommand{\Mc}{\mathcal{M}}

\DeclareMathOperator{\Tr}{\mathsf{Tr}}
\DeclareMathOperator{\Fr}{Fr}

\DeclareMathOperator{\vol}{\mathsf{vol}}
\DeclareMathOperator{\Spec}{\mathsf{Spec}}

\DeclareMathOperator{\Sym}{Sym}

\DeclareMathOperator{\Oo}{\mathcal{O}}

\DeclareMathOperator{\rank}{rk}

\DeclareMathOperator{\Yc}{\mathcal{Y}}

\DeclareMathOperator{\Qb}{\mathbb{Q}}

\newcommand{\BA}{{\mathbb{A}}}

\newcommand{\BC}{{\mathbb{C}}}
\newcommand{\BD}{{\mathbb{D}}}

\newcommand{\BF}{{\mathbb{F}}}
\newcommand{\BG}{{\mathbb{G}}}

\newcommand{\BL}{{\mathbb{L}}}
\newcommand{\BM}{{\mathbb{M}}}
\newcommand{\BN}{{\mathbb{N}}}

\newcommand{\BQ}{{\mathbb{Q}}}

\newcommand{\BX}{{\mathbb{X}}}

\newcommand{\BZ}{{\mathbb{Z}}}

\newcommand{\Bw}{\mathbbm{w}}

\newcommand{\CB}{{\mathcal B}}
\newcommand{\CC}{{\mathcal C}}

\newcommand{\CF}{{\mathcal F}}
\newcommand{\CG}{{\mathcal G}}

\newcommand{\CM}{{\mathcal M}}

\newcommand{\CT}{{\mathcal T}}

\DeclareMathOperator{\Res}{\mathsf{Res}}

\DeclareMathOperator{\Pic}{\mathsf{Pic}}
\DeclareMathOperator{\BPic}{\mathbb{P}\mathsf{ic}}

\newcommand{\FA}{\A}

\newcommand{\BPS}{\mathcal{BPS} }

\DeclareMathOperator{\Ext}{\mathsf{Ext}}

\DeclareMathOperator{\Stab}{\mathsf{Stab}}

\DeclareMathOperator{\Rep}{\mathsf{Rep}}

\newcommand{\defeq}{\colonequals}
\let\into\hookrightarrow

\theoremstyle{definition}
\newtheorem{definition}{Definition}[section]
\newtheorem{construction}[definition]{Construction}
\newtheorem{rmk}[definition]{Remark}

\theoremstyle{plain}
\newtheorem{theorem}[definition]{Theorem}
\newtheorem{proposition}[definition]{Proposition}
\newtheorem{corollary}[definition]{Corollary}

\newtheorem{lemma}[definition]{Lemma}

\newtheorem{example}[definition]{Example}

\begin{document}

\author[1]{Michael Groechenig\thanks{\url{michael.groechenig@utoronto.ca}}}
\author[2]{Dimitri Wyss\thanks{\url{dimitri.wyss@epfl.ch}}}
\author[3]{Paul Ziegler\thanks{\url{paul.ziegler@mathematik.uni-regensburg.de}}}
\affil[1]{Department of Mathematics, University of Toronto}
\affil[2]{\'Ecole Polytechnique F\'ed\'erale de Lausanne}
\affil[3]{Fakult\"at f\"ur Mathematik, Universit\"at Regensburg}

\title{$\chi$-independence for K3-surfaces via $p$-adic integration\let\thefootnote\relax\footnotetext{M.G. was supported by an NSERC discovery grant and an Alfred P. Sloan
fellowship. D.W. was supported by the Swiss National Science Foundation [No. 218340]. P.Z. was supported by the German Research Foundation (Project Number 547483711).}}

\renewcommand\Authands{ and }
\maketitle 

\begin{abstract}
This article provides a proof of a previously unknown case of Toda's $\chi$-independence conjecture by reduction to non-archimedean local fields. Our strategy is based on a novel comparison of Frobenius-traces for BPS sheaves on moduli spaces of objects in 2-Calabi-Yau categories and the integral of the complex-exponentiated Hasse invariant of the obstruction gerbe. This result applies to many cases of interest, including Nakajima quiver varieties, moduli of Higgs bundles and moduli of sheaves on K3 surfaces. Along the way, we describe the local structure of these moduli stacks and spaces over a base of large mixed characteristic.
\end{abstract}

\tableofcontents

\section{Introduction}

The purpose of this note is to prove Toda's $\chi$-independence conjecture \cite[Conjecture 1.2]{To17} for the BPS cohomology of moduli spaces of 1-dimensional sheaves on smooth projective surfaces $X$ with trivial canonical bundle, that is either a K3 or abelian surface.

Let $\beta$ be an ample, base-point free curve class on $X$ and $\chi \in \BZ$. We write $\BM_{\beta,\chi}$ for the moduli stack of pure $1$-dimensional sheaves on $X$ that are Gieseker-semistable with respect to a fixed polarization, have support of class $\beta$ and Euler-characteristic $\chi$. Using dimensional reduction on the local Calabi-Yau 3-fold $X \times \BA^1$ \cite{To17} defines a perverse sheaf, in fact a mixed Hodge module, $\BPS_{\M_{\beta,\chi}}$ on the good moduli space $\M_{\beta,\chi}$, which encodes the mathematical definition of  Gopakumar-Vafa invariants in the presence of strictly semistable objects. The invariants are essentially given as dimensions of graded pieces in the perverse filtration on the cohomology $H^*(\M_{\beta,\chi},\BPS_{\M_{\beta,\chi}})$ induced by the proper Hilber-Chow morphism $ \M_{\beta,\chi} \to B=\mathbb{P} H^0(X,\CO_X(\beta))$. Our first results is the following: 

\begin{theorem}[$\chi$-independence for the perverse filtration]\label{thm:main-2} Let $\beta$ be an ample and base-point free curve class and $\chi,\chi'\in \Zb$. Then there exists an isomorphism of graded vector spaces
\[H^*(\M_{\beta,\chi},\BPS_{\M_{\beta,\chi}}) \cong H^*(\M_{\beta,\chi'},\BPS_{\M_{\beta,\chi'}}),  \]
respecting the perverse filtration. 
\end{theorem}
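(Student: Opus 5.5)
We will reduce the statement to a comparison of point counts over finite fields, relative to the Hilbert--Chow base $B=\mathbb{P}H^0(X,\CO_X(\beta))$, and then invoke purity and the decomposition theorem. Write $h_\chi\colon \M_{\beta,\chi}\to B$ for the proper support morphism. Since $h_\chi$ is proper and $\BPS_{\M_{\beta,\chi}}$ is a pure Hodge module (it is a direct summand of pushforwards of intersection complexes coming from the 2-Calabi--Yau local structure), the complex $Rh_{\chi*}\BPS_{\M_{\beta,\chi}}$ decomposes by the decomposition theorem. The perverse filtration on $H^*(\M_{\beta,\chi},\BPS)$ is then governed by the perverse cohomology sheaves ${}^pH^i(Rh_{\chi*}\BPS)$ on $B$, which are semisimple. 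It therefore suffices to show that for each $i$ the semisimple perverse sheaves ${}^pH^i(Rh_{\chi*}\BPS_{\M_{\beta,\chi}})$ and ${}^pH^i(Rh_{\chi'*}\BPS_{\M_{\beta,\chi'}})$ on $B$ are isomorphic. Taking hypercohomology then gives a graded isomorphism respecting the perverse filtration.

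By a spreading-out argument we choose a finitely generated $\Zb$-algebra over which $X$, the polarization, $\beta$ and the moduli spaces are defined. We pass to finite fields $\Fb_q$ of large characteristic and use Chebotarev density together with purity. A semisimple pure perverse sheaf on $B$ is determined, up to isomorphism, by its Frobenius trace functions at all $\Fb_{q^n}$-points of $B$. This is a Deligne/Laumon-type argument: semisimple pure complexes with equal trace functions have isomorphic semisimplifications. Using purity, we separate the perverse degrees by weight. The claim is thereby reduced to an equality of traces: for every finite field $k$ and every $b\in B(k)$,
\[
\sum_{x\in h_\chi^{-1}(b)(k)}\Tr(\Fr_x,\BPS_{\M_{\beta,\chi},x}) = \sum_{x\in h_{\chi'}^{-1}(b)(k)}\Tr(\Fr_x,\BPS_{\M_{\beta,\chi'},x}).
\]
Here we use that the relative cohomology is pure, so the traces of the individual ${}^pH^i$ are recovered from these fibrewise sums.

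The key input is the comparison announced in the abstract. It says that the Frobenius trace of the BPS sheaf, summed over a fibre, equals a $p$-adic integral over the corresponding $\Oc_F$-points of the moduli space, for a local field $F$ with residue field $k$. The integrand is the complex-exponentiated Hasse invariant of the obstruction gerbe (a $\G_m$-gerbe on the stable locus). This turns both sides into integrals over fibres of $h$ lying over $\Oc_F$-points of $B$ reducing to $b$. Over the locus of integral, in particular smooth, curves in $|\beta|$, the fibres are torsors (compactified Jacobians) under $\Pic^0$ of the curve. Tensoring with a line bundle of degree $\chi'-\chi$ defined over $F$ (on a suitable open set of $B$) identifies the fibres. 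In general, the Brauer/Tate-duality description of the gerbe shows how the Hasse invariant twists.

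The main obstacle is this step, in the form needed for the integral of the Hasse invariant. One must show that the two integrals agree over the generic locus, where the Hasse invariants differ by a pairing with the class obstructing the existence of a degree-$(\chi'-\chi)$ line bundle. This should follow from Tate duality for abelian varieties, as in the Groechenig--Wyss--Ziegler proof of topological mirror symmetry: both integrals equal the volume of the torsor fibres twisted by characters that cancel after integration. One must also check that the complement of the generic locus has measure zero. This holds because $p$-adic integrals only see the $F$-points of a dense open set.

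Finally, the identification of traces must hold for all fields in the spread-out family. Purity of $\BPS$, which comes from the 2CY local structure (étale locally a Nakajima quiver variety, whose BPS sheaf is known to be pure), then allows us to conclude via the Chebotarev-type argument and the decomposition theorem, as described in the first paragraph.
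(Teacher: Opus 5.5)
Your architecture matches the paper's. You spread out and reduce to an equality of Frobenius traces on the stalks of $h_{\chi*}\BPS$ and $h_{\chi'*}\BPS$ at $b\in B(k)$; the paper does this by citing \cite[Theorem 6.6]{GWZ24}, and your weight/Grothendieck-group argument for it is fine. You then convert these traces into gerbe-corrected integrals via Theorem \ref{thm:key}, and use Fubini over $B^{sm}$ so everything reduces to comparing integrals over the Picard torsors $\Pic^{\chi+g-1}_{C_{\tilde b}}(F)$ and $\Pic^{\chi'+g-1}_{C_{\tilde b}}(F)$. That last comparison is the actual content beyond Theorem \ref{thm:key}, and you leave it as ``should follow from Tate duality'' with a mechanism that does not work as stated. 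So there is a genuine gap, with two missing pieces.

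The first is the measures. A line bundle of degree $\chi'-\chi$ on $C_{\tilde b}$ defined over $F$ need not exist. Even when it does, you must check that the fibrewise forms $\omega_b$ induced by the symplectic forms on $\M_{\beta,\chi}$ and $\M_{\beta,\chi'}$ correspond under it. The paper shows both come from one translation-invariant form on $\Pic^0_{C_{\tilde b}}$, using the Hecke correspondence (Lemma \ref{ThetaPullbacks}, $p_1^*\theta=p_2^*\theta$) and modification at a point $c\in C_{\tilde b}(F)$. Such a point exists after a finite extension of $F$, which is harmless for a statement about forms.

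The second, and more serious, is the gerbe. If $\Pic^{\chi+g-1}_{C_{\tilde b}}(F)\neq\emptyset$, then $e^{2\pi i\alpha_{\chi+g-1}}$ is a constant $c$ times the character $e^{2\pi i\alpha_0}$ of $\Pic^0_{C_{\tilde b}}(F)$. When that character is trivial, nothing cancels under integration and you get $c$ times the volume. So you must prove $c=1$, i.e.\ that the gerbe vanishes identically on such fibres. The paper compares with $\chi'=1$, where $(\beta,1)$ is primitive, the gerbe extends over $\M_{\beta,1}$ and $e^{2\pi i\alpha}\equiv 1$. It then splits into two cases.
\begin{enumerate}
\item If $\Pic^{g}_{C_{\tilde b}}(F)=\emptyset$, then by \cite[Lemma 5.0.6]{COW21} the character $e^{2\pi i\alpha_0}$ is non-trivial, so both sides vanish.
\item If $\Pic^{g}_{C_{\tilde b}}(F)\neq\emptyset$, then all $\Pic^d_{C_{\tilde b}}(F)$ are non-empty. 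Moreover $\alpha_{\chi+g-1}\equiv 0$, because its order divides $g$ (again \cite{COW21}) and also divides $l$ \cite[Lemma 5.18]{GWZ24}, where $(\beta,\chi)=l(\beta_0,\chi_0)$ with $(\beta_0,\chi_0)$ primitive. Since the intersection form on a K3 or abelian surface is even, $g=\beta^2/2+1=l^2\beta_0^2/2+1$ is prime to $l$.
\end{enumerate}
This coprimality is exactly where the geometry of $X$ enters beyond the 2CY structure, and your sketch does not have it. Without it, the fibrewise integrals could a priori differ by a root of unity.
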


We can also consider $H^*(\M_{\beta,\chi},\BPS_{\M_{\beta,\chi}})$ as a Hodge structure in which case we obtain:

\begin{theorem}[$\chi$-independence for Hodge numbers]\label{thm:main}
 Let $\beta$ be an ample and base-point free curve class and $\chi,\chi'\in \Zb$. Then, the BPS Hodge numbers satisfy for all $p,q\geq 0$ the following relation
\[
h^{p,q}(\M_{\beta,\chi},\BPS_{\M_{\beta,\chi}}) = h^{p,q}(\M_{\beta,\chi'},\BPS_{\M_{\beta,\chi'}}).
\]
\end{theorem}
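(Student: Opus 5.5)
We plan to deduce the equality of Hodge numbers from an equality of point counts over finite fields, in the spirit of the $p$-adic integration approach to topological mirror symmetry, combined with purity. The plan has four steps.

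\textbf{Step 1: spread out.} We spread out $X$, the polarization, the class $\beta$, the moduli stacks $\BM_{\beta,\chi},\BM_{\beta,\chi'}$, their good moduli spaces, the Hilbert--Chow morphisms to $B$, and the BPS sheaves over a finitely generated $\Zb$-algebra $R$ of large residue characteristic. By the local structure results over a mixed characteristic base, the formation of $\BPS_{\M_{\beta,\chi}}$ commutes with specialization. This produces $\ell$-adic perverse sheaves whose Frobenius traces at $\Fb_q$-points of $\M_{\beta,\chi}$ are well defined.

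\textbf{Step 2: express the traces as $p$-adic integrals.} We use the paper's comparison theorem for 2-Calabi--Yau moduli. For a local field $F$ with residue field $\Fb_q$, the sum of Frobenius traces of $\BPS_{\M_{\beta,\chi}}$ over $\M_{\beta,\chi}(\Fb_q)$ equals, up to a normalization $q^{-\dim/2}$, the $p$-adic integral over $\M_{\beta,\chi}(\mathcal{O}_F)$ of the complex-exponentiated Hasse invariant of the obstruction gerbe, taken against the canonical measure. Because everything is fibered over $B$, we do this relative to $B$: fix $b\in B(\mathcal{O}_F)$ and compare the fibre integrals.

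\textbf{Step 3: compare $\chi$ and $\chi'$ fibrewise.} Over the locus of $b$ whose spectral curve $C_b$ is smooth (or integral), the fibres of both moduli spaces are torsors under $\Pic^0(C_b)$, namely $\Pic^{d}$ and $\Pic^{d'}$. Over $F$, Tate duality for abelian varieties gives the equality of the twisted volumes: the Hasse-invariant twist on one side matches the torsor class on the other, exactly as in the Hitchin case. Since the locus of smooth spectral curves has full measure in $B(\mathcal{O}_F)$, the integrals agree. Hence the point counts agree, including the twist by the perverse filtration. To capture that, we run the argument for the pushforward $Rh_*\BPS$ to $B$, comparing Frobenius traces stalkwise at every $b\in B(\Fb_{q^n})$ for all $n$. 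By Chebotarev together with the decomposition theorem, this gives equality of the semisimplified perverse cohomology sheaves ${}^pH^i(Rh_*\BPS)$ in the Grothendieck group, which yields Theorem~\ref{thm:main-2}.

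\textbf{Step 4: pass to Hodge numbers.} The BPS cohomology is pure, since $\M$ is proper over $B$ and $B$ is projective. Equality of Frobenius traces over all finite extensions and all good specializations then gives equality of the $E$-polynomials by Katz's theorem, in the version for pure cohomology with coefficients in a pure Hodge module. Purity recovers $h^{p,q}$ from the $E$-polynomial; in particular the weight is the total degree, giving Theorem~\ref{thm:main}. A subtlety is that the BPS sheaf is only a Hodge module, so we need a Katz-type statement for cohomology of pure complexes. We would handle this through the decomposition $\BPS=\IC$ on the stable locus plus contributions from the deepest strata, or more directly through the BPS sheaf's description as a summand of $R\pi_*$ of a smooth stack's constant sheaf.

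\textbf{Main obstacle.} The main obstacle is Step 3 over the non-integral spectral curves, where strictly semisemistable points live. There the fibres are not $\Pic$-torsors, and the full-measure argument needs the comparison theorem to hold as an equality of integrable functions on all of $B(\mathcal{O}_F)$. The first thing we would try is to show that the complement of integral curves is measure zero in $B(\mathcal{O}_F)$, which follows since it is a proper Zariski-closed subset. The BPS sheaf contributions there are then invisible to the integral. What remains is to check that the Step 2 comparison does not require extra boundary corrections on that locus.
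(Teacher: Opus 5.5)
Your architecture matches the paper's. You spread out, apply Theorem~\ref{thm:key} at every $x\in h_{\beta,\chi}^{-1}(b)(k)$ so that $\Tr(\varphi_b|(h_{\beta,\chi*}\BPS)_b)$ becomes a gerbe-twisted integral over $h_{\beta,\chi}^{-1}(B(\Oc_F)_b)$, and use Fubini to reduce to $\tilde b\in B(\Oc_F)_b\cap B^{sm}(F)$. You then compare on $\Pic$-torsors and return to $D_c(B)$ and Hodge numbers by the purity and Chebotarev argument of \cite[Theorem 6.6]{GWZ24}.

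\textbf{The gap is in Step 3.} Duality over $F$ only tells you that, on the $J(F)$-torsor $\Pic^{\chi+g-1}_{C_{\tilde b}}(F)$, the function $e^{2\pi i\alpha}$ is a constant $c$ times a character. It also tells you the character is non-trivial exactly when $\Pic^1_{C_{\tilde b}}(F)=\emptyset$, which settles the case where both integrals vanish. When $\Pic^1(F)\neq\emptyset$ you still have to show $c=1$, and duality cannot do this. For a curve over a local field with period $1$ and index $2$ one gets $c=(-1)^{\chi+g-1}$, which would give a sign discrepancy between $\chi$ and $\chi'$.

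The paper closes this with an arithmetic input your sketch lacks. The order of $\alpha_{\chi+g-1}$ divides $g$ \cite[Lemma 5.0.6]{COW21}. It also divides $l$, where $(\beta,\chi)=l(\beta_0,\chi_0)$ with $(\beta_0,\chi_0)$ primitive \cite[Lemma 5.18]{GWZ24}; this bound is not a consequence of duality on $\Pic_{C_{\tilde b}}$ over $F$. Finally, $\gcd(g,l)=1$ because $g=\tfrac12\beta^2+1=l^2\cdot\tfrac12\beta_0^2+1$, using evenness of the intersection form. Normalising $\chi'=1$, where all points are stable and $e^{2\pi i\alpha}\equiv 1$ on $\Oc_F$-points, makes the comparison one-sided.

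You also omit a second step. You must show that the fibre measures $|\omega_b|$ for $\chi$ and $\chi'$ are induced by the same Haar measure on $\Pic^0_{C_{\tilde b}}$. This is not formal here; the paper proves it in Subsection~\ref{VolumeFormSubsec} via the Hecke correspondence and Lemma~\ref{ThetaPullbacks}.

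\textbf{Your ``main obstacle'' is misdiagnosed.} Theorem~\ref{thm:key} is an equality on the residue disc $\M(\Oc_F)_x$ of every $x\in\M(k)$, strictly semistable or not, so no boundary correction is needed. What has measure zero is the set of $\Oc_F$-points whose generic-fibre curve is singular. The strictly semistable points of the special fibre are not ``invisible'': up to measure zero, their residue discs consist of $\Oc_F$-points lying over smooth curves $\tilde b\in B^{sm}(F)$. That is exactly where the twisted integral records the non-trivial stalks of $\BPS$. Also restrict to smooth rather than integral curves, since over singular integral curves the fibres are compactified Jacobians, not $\Pic^0$-torsors.
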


In fact, we prove relative versions of these results, see Theorem \ref{chiind}. These were also obtained by Davison--Hennecart--Kinjo--Schiffmann--Vasserot simultaneously by means of a different argument \cite{DHSSV26}.

\subsubsection*{Overview of previous results}

Even though Toda's $\chi$-independence conjecture is expected to hold for a much wider class of moduli problems, lately, the K3 case has received particular attention. The Euler characteristic version of Theorem \ref{thm:main} was proven by Maulik--Thomas in \cite[Theorem 1.2]{maulik2019sheaf} and a K-theoretic refinement follows from \cite{Th24}.

In \cite{KK21}, the $\chi$-independence conjecture was established for the moduli problem of Higgs bundles on a curve $X$. Due to the interpretation of Higgs bundles as coherent sheaves on the cotangent bundle $T^*X$, this case is often described as the \emph{local curve case}. 

\begin{theorem}[Kinjo--Koseki]\label{thm:kinjo-koseki}
Let $\BM_{Dol}(C;r,d)$ denote the moduli stack of semistable Higgs bundles of rank $r$ and degree $d$ on a smooth proper curve $C$ and $\M_{Dol}(C;r,d)$ its good moduli space. Then, for fixed $r$ and arbitrary integers $d,d'$ there exists an isomorphism
\[H^*(\M_{Dol}(C;r,d),\BPS_{\M_{Dol}(C;r,d)}) \cong H^*(\M_{Dol}(C;r,d'),\BPS_{\M_{Dol}(C;r,d')}).\]
\end{theorem}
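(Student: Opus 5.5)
We would prove the Kinjo--Koseki statement with the $p$-adic integration strategy used in this paper, rather than with the original sheaf-theoretic argument. The idea is to compare the BPS cohomologies for degrees $d$ and $d'$ by counting points over finite fields, and then upgrade an equality of weighted point counts to an isomorphism of graded vector spaces.

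\textbf{Step 1 (spreading out).} Spread out $C$, the stacks $\BM_{Dol}(C;r,d)$, $\BM_{Dol}(C;r,d')$, their good moduli spaces, and the Hitchin maps $h_d\colon \M_{Dol}(C;r,d)\to \mathcal{A}$ and $h_{d'}\colon \M_{Dol}(C;r,d')\to \mathcal{A}$ over a finitely generated ring $R\subset \Cb$. After inverting finitely many primes, the complexes $h_{d,*}\BPS$ and $h_{d',*}\BPS$ are defined over $R$ and are compatible with base change to finite fields $\Fb_q$. These complexes are pure, since the Hitchin map is proper. So by the decomposition theorem and Chebotarev density, it suffices to show that the two complexes have equal Frobenius traces at every $\Fb_q$-point $a\in\mathcal{A}(\Fb_q)$, for all sufficiently large $q$.

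\textbf{Step 2 (traces as $p$-adic integrals).} Next we express the stalk trace of $h_{*}\BPS$ at $a$ as a $p$-adic integral over the fibre $h^{-1}(a)$, after lifting to a local field $F$ with residue field $\Fb_q$. The integrand is the complex-exponentiated Hasse invariant of the obstruction gerbe (a $\mu_n$- or $\Gb_m$-gerbe on the stable locus), integrated against the canonical measure on $\M(\Oo_F)$. This is exactly the comparison advertised in the abstract for 2-Calabi--Yau moduli, and here it applies to $T^*C$. In carrying this out we rely on the local structure results: étale-locally, $\M_{Dol}$ near a polystable point looks like a Nakajima quiver variety over a mixed-characteristic base. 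Combined with the BPS/integral comparison for quiver varieties, this gives the local identity, which we then glue.

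\textbf{Step 3 (comparing $d$ and $d'$).} Over the generic locus of $\mathcal{A}$, i.e.\ smooth spectral curves, both fibres are torsors under the Prym or Picard variety $P_a$. The integrals then agree by the standard Groechenig--Wyss--Ziegler argument: the Hasse invariant of the gerbe and the torsor class pair to a Tate-duality character, and the integrals of $P_a$-torsors twisted by such characters match after exchanging $d$ and $d'$. Over all of $\mathcal{A}(\Oo_F)$ we would use that $\Oo_F$-points of the Hitchin base generically land in the smooth locus, together with continuity of the integrals. The integrals agree fibrewise on a dense open set of $\Oo_F$-points, hence the Frobenius traces agree on the relevant points. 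The real requirement, however, is equality of traces at every $\Fb_q$-point of $\mathcal{A}$, including singular spectral curves. For these we would integrate over the tube of $\Oo_F$-points reducing to $a$ and invert this tube-averaging via purity; the plan is to recover the trace at $a$ from tube integrals by a Möbius-type inversion over extensions $\Fb_{q^k}$.

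\textbf{Main obstacle.} We expect the hardest part to be Step 2 at strictly semistable points. There the BPS sheaf is not the intersection complex, and the gerbe is only defined on the stable part. Our first approach would be to use the étale-local quiver model to reduce to the quiver case, where BPS sheaves are known to be intersection complexes, or sums of them, on Nakajima varieties. We would then verify the trace–integral identity there by support decomposition.
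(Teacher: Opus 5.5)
There is a genuine gap: your Step 2 is circular relative to this paper. The paper does not prove this statement. It quotes it from \cite{KK21} and uses it as an \emph{input} to the very trace--integral comparison you invoke.

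Concretely, Theorem \ref{thm:key} is reduced, via Proposition \ref{prop:K3-linear} and Corollary \ref{ploc}, to Lemma \ref{lemma:key} for Frobenius-twisted quiver varieties. That lemma is proved by induction. When a nontrivial $\sigma$-invariant stability condition exists, one passes to $\N^{\theta,\tau}_{DQ,d}$ and uses Proposition \ref{wc} to reduce to smaller twisted quivers. The induction bottoms out exactly when Frobenius permutes the vertices transitively, e.g.\ a $g$-loop quiver with $d\geq 2$. There the paper proceeds as follows:
\begin{enumerate}
\item it realises the singularity as the most singular point $y$ of a Hitchin fibre $h_{r,0}^{-1}(a)$ (Construction \ref{const:spectral-curve});
\item it obtains the identity at $y$ by subtracting the other points of the fibre from the identity for the whole fibre;
\item it obtains the whole-fibre identity by moving to a degree coprime to $r$: for the integral via \cite{COW21}, and for the trace via Corollary \ref{cih}, i.e.\ via the Kinjo--Koseki theorem itself.
\end{enumerate}
So the local identity at strictly semistable points of $\M_{Dol}(C;r,d)$ is exactly what you need when $\gcd(r,d)\neq 1$. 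In this paper it is a \emph{consequence} of the statement you are trying to prove.

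Your fallback in the ``main obstacle'' paragraph does not close the gap. Knowing that the BPS sheaf on a local model is an IC sheaf (or built from such) tells you nothing about the Frobenius trace of its stalk at the origin. Nor does it give you the gerbe-corrected integral over the tube $\N(\Oc_F)_0$. The paper stresses that these integrals are exceedingly hard to evaluate directly for precisely these terminal quivers, which is why it resorts to Higgs bundles. A non-circular version of your strategy would need one of two things:
\begin{itemize}
\item an independent evaluation of $\int_{\N^\tau_{DQ,d}(\Oc_F)_0} e^{2\pi i\alpha}|\omega|$ for twisted quivers with a single Frobenius orbit of vertices; or
\item some other independent replacement for Corollary \ref{cih}.
\end{itemize}
The proposal supplies neither.

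Two lesser remarks. First, over $\Fb_q$ the local models are Frobenius-\emph{twisted} quiver varieties. The linearisation must be descended from an \'etale cover (Proposition \ref{prop:symp-linear}), and ``\'etale-locally a Nakajima variety'' does not give you that. Second, the M\"obius-type inversion in Step 3 is unnecessary. Once the local identity is available, summing it over $h^{-1}(a)(\Fb_q)$ and applying Grothendieck--Lefschetz identifies the tube integral over $h^{-1}(a)$ with $q^{-\dim\M}\Tr(\varphi_a\,|\,(h_*\BPS)_a)$. Fubini then reduces this to fibres over generic $\tilde a\in\mathcal{A}(\Oc_F)_a$, exactly as in the proof of Theorem \ref{chiind}.
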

When $d$ and $d'$ are coprime to $r$, the resulting moduli spaces are smooth and BPS cohomology equals singular cohomology. In the coprime situation, this degree-independence result was conjectured by Hausel \cite[Conjecture 3.2]{Hausel:zr} and independently proven by Mellit in \cite{Me20} and the authors in \cite{GWZ20a}.

Beyond the case of varieties with trivial canonical bundles, the Fano case has received attention. Cohomological $\chi$-independence for moduli of sheaves on a del Pezzo surface with one-dimensional support, was established by Maulik--Shen in \cite{MS20}. In this case, the BPS-cohomology agrees with the classical intersection cohomology of the moduli space $\M$ and the stack $\BM$ happens to be smooth. Both these properties fail in the case of varieties with trivial canonical bundles.

\subsubsection*{Methods}

The strategy is ultimately based on an application of $p$-adic integration in the spirit of Batyrev's paper \cite{batyrev1999birational} and the previous work of the authors. We emphasise that even though this appears to be the fourth paper in a series of articles, it can be read independently of its predecessors.

As a first step, one considers a spreading-out of the moduli stacks $\BM_{\beta,\chi}$ to a base $S$ of mixed characteristic, from which we obtain by a further base change algebraic stacks over the ring of integers $\Oc_F$ of a non-archimedean local field of characteristic $0$ (i.e., a finite extension $F/\Qb_p$ of the field of $p$-adic numbers).

The resulting moduli space $\M_{\beta,\chi}/\Oc_F$ is often a singular $\Oc_F$-scheme. However, its singularities are sufficiently moderate to define a canonical measure $|\omega_{\M_{\beta,\chi}}|$ on the set $\M_{\beta,\chi}(\Oc_F)$ of $\Oc_F$-rational points exploiting the symplectic structure on the smooth locus of $\M_{\beta,\chi}$. For points that belong to the smooth locus of $\M_{\beta,\chi}(\Oc_F)$, integrating with respect to this measure amounts to counting $\Fb_q$-points in the following precise sense:
$$\vol((\M_{\beta,\chi})^{\sm}(\Oc_F)) = \frac{|(\M_{\beta,\chi})^{\sm}(k)|}{q^{\dim \M_{\beta,\chi}}},$$
where $k=\Fb_q$ denotes the finite field defined to be the residue field of the local ring $\Oc_F$.

Near singular points, ramifications arise, and we are unable to offer a similar interpretation of the $p$-adic volume in complete generality. However, the situation improves considerably, once the volume is replaced by the following integral (which only differs from the volume near the singular locus):
$$\vol_{\alpha}(\M_{\beta,\chi}(\Oc_F)) = \int_{\M_{r,\chi}(\Oc_F)} e^{2\pi i \alpha} |\omega_{\M_{\beta,\chi}}|,$$
where $\alpha$ denotes the Brauer class of the obstruction gerbe (to the existence of a universal sheaf) defined over the stable locus of $\M_{\beta,\chi}$. The Brauer group of $F$ is isomorphic to $\Qb/\Zb$ by means of the Hasse invariant. This allows us to define the above exponential for all $\Oc_F$-points $x\in \M_{\beta,\chi}(\Oc_F)$ with the corresponding $F$-point belonging to the stable locus. Since the complement of this set is a zero set, this is sufficient to define $\vol_{\alpha}(\M_{\beta,\chi}(\Oc_F))$.
\begin{theorem}\label{thm:main-3}
For $\BM_{\beta,\chi}/\Oc_F$ obtained by reduction from a complex moduli of objects in a CY2 category, we have whenever the residue characteristic $p$ of $\Oc_F$ is sufficiently large that \[\vol_{\alpha}(\M_{\beta,\chi}(\Oc_F)) =  \frac{\Tr(\varphi|H^*(\BPS_{\M_{\beta,\chi}}))}{q^{\dim \M_{\beta,\chi}}}= q^{-\dim \M_{\beta,\chi}}\sum_{i=0}^{2\dim M_{\beta,\chi}} (-1)^i \Tr\big(\varphi|H^i(\M_{\beta,\chi},\BPS_{\BM_{\beta,\chi}})\big).\]
\end{theorem}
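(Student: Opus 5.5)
The plan is to reduce both sides of Theorem \ref{thm:main-3} to sums over the residue field points $x \in \M_{\beta,\chi}(k)$ and compare them point by point. On the $p$-adic side, the integral decomposes over residue discs: $\vol_\alpha(\M(\Oc_F)) = \sum_{x\in \M(k)} \int_{\mathrm{red}^{-1}(x)} e^{2\pi i\alpha}|\omega|$. On the cohomological side, the Grothendieck--Lefschetz trace formula gives $\Tr(\varphi|H^*(\BPS)) = \sum_{x \in \M(k)} \Tr(\varphi_x|\BPS_x)$; this uses properness over the base, or more precisely the fact that cohomology with compact support agrees with ordinary cohomology in this setting, which I would check separately, for instance via the proper map to $B$ or via purity. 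It therefore suffices to prove the local identity
\[
\int_{\mathrm{red}^{-1}(x)} e^{2\pi i\alpha}|\omega_{\M}| = q^{-\dim \M}\,\Tr(\varphi_x|\BPS_{\M,x})
\]
for every $x\in\M(k)$, once $p$ is large.

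The key input is the local structure of the moduli stack near a polystable point, which the abstract announces. A polystable point $x=\bigoplus_i E_i^{\oplus m_i}$ has stabilizer $G_x=\prod_i \GL_{m_i}$, and the associated Ext-quiver $Q_x$ is a preprojective/doubled quiver built from $\Ext^1(E_i,E_j)$. I would establish, uniformly over a mixed-characteristic base $S$ (hence over $\Oc_F$ for $p\gg0$), an étale/formal local isomorphism between $(\BM,x)$ and $(\mu^{-1}(0)/G_x,0)$ for the moment map of the quiver representation space; this is the Ext-quiver/formality theorem for CY2 categories, spread out. On the BPS side, the sheaf $\BPS_{\M}$ near $x$ is then identified with the BPS sheaf of the preprojective algebra of $Q_x$ at the origin, compatibly with Frobenius. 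By the support lemma, or Davison's description, its stalk is governed by $\mathcal{BPS}$ of $Q_x$. On the $p$-adic side, the integral over the residue disc is transported to the corresponding integral on the quiver moduli $\mu^{-1}(0)/\!\!/G_x$, where both the canonical measure and the gerbe class $\alpha$ are also defined, and $\alpha$ pulls back to the obstruction class of the quiver moduli.

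This reduces the theorem to the quiver case, namely a local version of Theorem \ref{thm:main-3} for Nakajima-type quiver varieties at the origin. There I would use the $\mathbb{G}_m$-contracting action, which lets the integral over the full $\Oc_F$-points determine the disc at $0$, together with the global version for quiver varieties. The global version can be proved by computing $\vol_\alpha$ via counting: stratify $\M(\Oc_F)\cap \M^{st}(F)$ according to the reduction type and use Hasse invariants, so that $e^{2\pi i\alpha}$ picks out exactly the absolutely stable (geometrically indecomposable) contributions. This should give a Kac-polynomial type count, $\sum$ over absolutely indecomposable reps, matching $q^{-d}\Tr(\varphi|H^*(\BPS))$ via the known theorem that BPS cohomology of preprojective algebras realises Kac polynomials (Davison; Hausel--Letellier--Rodriguez-Villegas).

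The main obstacle I expect is the $p$-adic computation at singular points: showing that the character $e^{2\pi i\alpha}$ exactly cancels the contributions of $F$-points whose reductions lie in deeper strata, except for those accounted for by the BPS stalk. My first attempt would be induction on the dimension vector, using the stratification by stabilizer type and the plethystic relation between stacky point counts of $\BM(k)$ and the BPS counts. I would then verify that the twisted integral satisfies the same plethystic recursion, which comes from Galois descent of polystable decompositions over unramified extensions together with the Hasse-invariant computation for division algebras.
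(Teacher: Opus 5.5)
Your outer structure agrees with the paper: reduce to the local identity at each $x\in\M(k)$ via Grothendieck--Lefschetz (projectivity of $\M_{\beta,\chi}$ handles your $H^*$ versus $H^*_c$ concern), then transport both sides to a quiver model (Proposition \ref{prop:K3-linear}, Corollary \ref{ploc}, Lemma \ref{lemma:loc-princ}). The gap is the step that carries all the weight: evaluating the gerbe-corrected integral on the residue disc at a singular point of the quiver model. You propose to compute $\vol_\alpha$ of quiver varieties directly, with $e^{2\pi i\alpha}$ ``picking out absolutely indecomposable contributions'', and to check that the integral satisfies the plethystic recursion of stacky point counts of $\BM(k)$. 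That needs an integration formula tying the $p$-adic measure to point counts on the stack, which is available only for smooth stacks. Here $[\mu^{-1}(0)/G]$ is singular, and the discs at singular points are exactly where the volume is not a point count. The paper stresses that these integrals are out of reach already for $g$-loop quivers, and the stacky formula (Remark after Theorem \ref{thm:key}) appears only as a \emph{consequence} of the theorem.

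The paper avoids direct evaluation altogether. It argues by induction, in two cases.
\begin{enumerate}
\item If a nontrivial Frobenius-invariant stability condition exists, it passes to the measure-preserving partial resolution $c\colon\N^{\theta,\tau}\to\N^{\tau}$. It uses $c_*\BPS=\BPS$ (Proposition \ref{wc}) and linearises at points of $c^{-1}(x)$ to get smaller quivers.
\item If Frobenius permutes the vertices transitively, it realises the model as the most singular point of a degree-$0$ Hitchin fibre (Construction \ref{const:spectral-curve}). The other points of the fibre are handled by induction. The whole fibre is compared with coprime degree, where both sides are naive point counts, using $\chi$-independence on both sides: Corollary \ref{cih} for BPS and Carocci--Orecchia--Wyss for the integrals.
\end{enumerate}
Without this global input, or an actual computation replacing it, your argument does not close.

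Your local model is also incorrect over $k$. The simple summands of $x\in\M(k)$ are generally defined only over extensions and are permuted by Frobenius. So the stabiliser is a non-split form, not $\prod_i\GL_{m_i}$, and the model is a twisted, $\sigma$-impartial quiver variety $\N^{\tau}_{DQ,d}$. Your Kac-polynomial identification would then need an unproven twisted analogue: traces of Frobenius composed with a diagram automorphism.

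Moreover, an \'etale-local isomorphism is not enough, because residue-disc integrals change under unramified base change, and Davison's charts generally contain points over $x$ only after enlarging the residue field. You need a formal symplectic linearisation defined over $\Oc_F$ itself. The paper obtains it by showing the $\Sb$-torsor $\Sigma_{\Xc,\bar{x}}$ is trivial, via $H^1(k,Z)=0$ for $p$ large (Theorem \ref{CentralizerThm}, Proposition \ref{prop:symp-linear}).

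Finally, the contracting $\Gb_m$-action relates the stalk at $0$ to $H^*(\N,\BPS)$, not to the $H^*_c$ that integrals detect. Subtracting the other discs by induction is the correct way to isolate the origin.
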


We refer the reader to Theorem \ref{thm:key} for a more precise and general version of the statement above, a proof is given in Subsection \ref{sub:vol-BPS}. It applies to arbitrary moduli stacks and spaces of objects in a CY2-category. In particular, it can be applied to Nakajima quiver varieties and moduli of rank $n$ and degree $0$ Higgs bundles on a curve $C$ and in fact these cases play an important role in the proof of this result.

The first step of the proof is based on rewriting the global formula above as a local statement, from which one reobtains the global assertion as a consequence of the Grothendieck-Lefschetz formula:
\begin{equation}\label{eqn:main-3}\int_{\M_{\beta,\chi}(\Oc_F)_{x}} e^{2\pi i \alpha}|\omega_{\M_{\beta,\chi}}| = \frac{\Tr(\varphi_{x}|(\BPS_{\M_{\beta,\chi}})_{x})}{q^{\dim \M_{\beta,\chi}}}.\end{equation}
In the formula above, $\M_{\beta,\chi}(\Oc_F)_{x}$ denotes the domain of $\Oc_F$-rational points of the moduli space, which specialise to a given $k$-point $x \in \M_{\beta,\chi}(k)$. On the right-hand side, we denote by $\BPS_{\M_{\beta,\chi}}$ the BPS sheaf, or more correctly, an object of the derived category $D^b_{const}(\M_{\beta,\chi},\overline{\BQ}_{\ell})$, to which we apply the Frobenius-trace, as familiar from the \emph{function-sheaf correspondence}. Reminiscent of the behaviour of these moduli of objects in CY2 categories over the complex numbers, one expects these moduli to be modeled on Nakajima quiver varieties, for large residue characteristic $p$. In particular, it suffices to prove Equation \eqref{eqn:main-3} for quiver varieties (and certain arithmetic twists, to be introduced in the following). Nevertheless, these integrals turn out to be exceedingly hard to evaluate, and a natural recursive approach to the proof of Equation \eqref{eqn:main-3} terminates in the case of relatively simple quivers like a $g$-loop quiver.

At this point, the proof of the local statement \eqref{eqn:main-3} reverts to a \emph{global} strategy, where the $\alpha$-corrected volumes for Nakajima quiver varieties and moduli of Higgs bundles interact with each other. The very quivers which lead to directly intractable $p$-adic integrals, also provide a local model for moduli spaces of Higgs bundles near singularities and global information about $p$-adic volumes for these spaces can be brought into play.
The $\chi$-independence result for Higgs bundles (see Theorem \ref{thm:kinjo-koseki}), proven by Kinjo--Koseki \cite{KK21} provides a key ingredient, and ultimately our proof of the K3 case should thus be framed as a reduction to the Higgs case.

Once this connection between BPS cohomology and the gerbe-corrected $p$-adic volume is established, we proceed by exploiting the $\chi$-independence of $\vol_{\alpha}$, which was already observed in \cite{COW21}, using an argument inspired by the authors' first article \cite{GWZ20a} that applied $p$-adic integration to establish the Hausel--Thaddeus conjecture and, simultaneously with Mellit's \cite{Me20}, proved degree-independence of the Betti numbers of the moduli space of Higgs bundles of rank $r$ and degree $d$ whenever $(r,d)=1$.

\begin{rmk}
In the precursor to this paper \cite{GWZ24} we proved an analogue of Theorem \ref{thm:main-3} for moduli of objects in hereditary abelian categories with symmetric Euler form (e.g., the stack of representations of a symmetric quiver, or moduli of semistable vector bundles on a curve). In this case, BPS cohomology of $\BM$ can be identified with intersection cohomology $IH^*(M)$ of the adequate moduli space. A direct proof of an analogous result of Theorem \ref{thm:main-3} was given in \emph{loc. cit.}, based on an explicit evaluation of the gerbe-corrected $p$-adic volume and a computation verifying that a numerical analogue of the Cohomological Integrality relation is satisfied. This relation was then applied to reprove the $\chi$-independence for Fano surfaces, which is the aforementioned result by Maulik--Shen \cite{MS20}.
While the precursor of this paper convinced the authors of the general feasibility to apply the method of $p$-adic integration to problems of interest in enumerative geometry, we emphasise that the results, ideas and methods of proof presented in the present article are independent of \emph{loc. cit.}
\end{rmk}

   The article is organised as follows: In Section \ref{linmod} we establish several results regarding the the local structure of moduli stacks, in particular with a symplectic structure, in mixed characteristics. This is applied to moduli of 2-Calabi-Yau categories in Section \ref{2CYsect}, where we also introduce BPS sheaves. Section \ref{bpsintsect} establishes the connection between BPS-invariants and $p$-adic integrals, Theorem \ref{thm:key},  and finally Section \ref{proofsect} contains the proof of the $\chi$-independence statements.

\medskip

\noindent\textit{Acknowledgements.} The authors believe that the main idea underlying this project would have never occurred to them without the pleasant working conditions provided by TUM and the group of Claudia Scheimbauer during a visit of the authors during which the key ideas of the argument presented here were developed. The gigantic slides in the building of mathematics and informatics deserve particular mention. 
We are grateful to Ben Davison,  Lucien Hennecart, Olivier Schiffmann, Tasuki Kinjo and \'Eric Vasserot for insightful discussions and coordination with the appearance of \cite{DHSSV26}. Furthermore we warmly thank Francesca Carocci,  Davesh Maulik, Tudor P\u{a}durariu, Sebastian Schlegel-Mejia and Tanguy Vernet for interesting conversations. Finally, it is a pleasure to thank the participants and organisers of the SwissMap conference \emph{Representations, Moduli and Duality} and the BIRS conference \emph{Geometry and arithmetic of algebraic stacks} for their helpful comments and encouragement after attending talks by the authors on the subject of this article.
\section{Linearisation of moduli}\label{linmod}

This section collects several results on the local structure of moduli stacks of objects in abelian and derived categories (satisfying additional assumptions). The essence of all results contained therein is well-known to experts. We record generalisations of these results in mixed characteristic, which are essential to the arithmetic treatment given in this paper.

\subsection{Preliminaries on quiver varieties and their twists}

Let $R$ be a ring. In this paper we will use categories of quiver representations in locally free $R$-modules, and certain \'etale twists of these categories. The formal development of these twists is best performed within a slightly modified framework for quivers introduced below. Similar notions abound in the literature and we lay no claims to originality here.

\begin{definition}
An \emph{quiver enriched in $R$-modules $Q_R$} (or short, an $R$-enriched quiver) is a pair $(Q_{0,R},Q_{1,R}=\oplus_{(x,y)\in Q_0^2}E_{xy})$ consisting of a set $Q_0$ and an $R$-module $Q_{1,R}$ graded by the set $Q_0^2$. We say that $Q_R$ is free (or locally free) if all $R$-modules $E_{xy}$ are free (or locally free).
\end{definition}

An example of an $R$-enriched quiver is depicted below in Figure \ref{fig:enter-label}. 
\begin{figure}[h]
    \centering
    \includegraphics[width=0.5\linewidth]{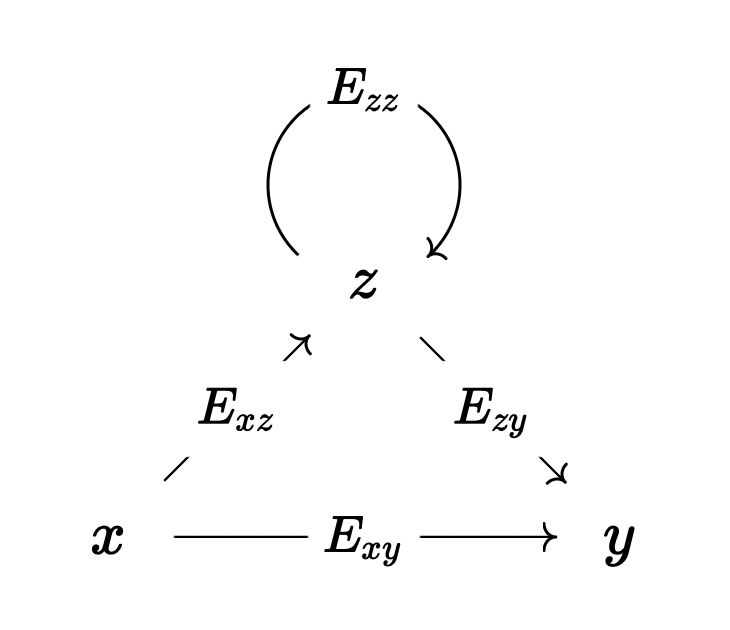}
    \caption{An $R$-enriched quiver with three vertices.}
    \label{fig:enter-label}
\end{figure}
To each quiver $Q$ in the classical sense of a directed graph, we may associate an $R$-enriched quiver $(Q)_R$ as follows. We define $Q_{0,R} = Q_0$ and for every pair of vertices $x,y$ we let $E_{xy}$ be the free $R$-module generated by the set of arrows from $x$ to $y$. The $R$-enriched quivers arising by means of this procedure are precisely those where the $R$-modules $E_{xy}$ are free. The set of directed edges $\vec{E}(x,y)$ from $x$ to $y$ is then in bijection with a basis for $E_{xy}$.

In the following, when describing graphs and quivers, we will adapt lax notation which identifies the isomorphism class of a finite set $\{1,\dots,n\}$ with the integer $n$. In this vein, we will describe a graph $G$ as a pair $(V,E)$, where $V$ is a set and $E\colon V \times V \to \BN$ is a symmetric function. 

For a quiver, or directed graph, we use the related notation $(V,\vec{E})$ where $\vec{E}\colon V \times V \to \BN$ is a function, without any symmetry constraints imposed.

\begin{definition}\label{defi:graph}
\begin{enumerate}
    \item[(a)] To a quiver enriched in locally free $R$-modules $Q_R$ we assign a quiver (a directed graph) by retaining the same set of vertices $Q_0$ and for every pair of vertices $x,y$ setting $\vec{E}(x,y) = \rank_R(E_{xy})$.
    \item[(b)] To a directed graph, we assign a graph by identifying pairs of edges with opposite orientation, yielding
    $$E(x,y)= E(y,x) = \max( \vec{E}(x,y) , \vec{E}(y,x))$$
    for $x\neq y$ and $E(x,x) = \lceil\frac{\vec{E}(x,x)}{2}\rceil$ for the number of loops at every vertex.
    \item[(c)] By composing these two steps, we associate a graph $G=G(Q_R)$ with a quiver $Q_R$ enriched in locally free $R$-modules. We will call $G$ the \emph{portrait} of $Q_R$.
\end{enumerate}
\end{definition}

\begin{definition}
A representation of an $R$-enriched quiver $Q_R$ consists of an $R$-module $M = \oplus_{x \in Q_0}M_x$ graded by $Q_0$, and for every pair of vertices $x,y \in Q_0$ a linear map
\[
E_{xy} \otimes M_x \to M_y.
\]
\end{definition}
It is clear from the above that for a quiver $Q$, there is a canonical equivalence of categories $\Rep_R(Q) \cong \Rep_R((Q)_R)$.

\begin{definition}
A morphism of $R$-enriched quivers $P_R \to Q_R$ is given by a map of sets $P_{0,R} \to Q_{0,R}$ and an $R$-linear morphism $P_{1,R} \to Q_{1,R}$ compatible with the gradings by $P_{0,R}$ respectively $Q_{0,R}$. We denote the set of morphism from $P_R$ to $Q_R$ by $\Hom(P_R,Q_R)$. Likewise, the group of automorphisms of an $R$-enriched quiver will be denoted by $\Aut(Q_R)$.
\end{definition}

\begin{rmk}
This viewpoint allows for a concrete description of the automorphism group $\Aut(Q_R)$ of an $R$-enriched quiver. It is to be understood as the subgroup of $\Aut_R(E)$ of $R$-linear automorphisms of the total edge module, satisfying the condition that for every pair $(i,j)\in I^2$ the direct factor $E_{ij}$ is isomorphically mapped onto another direct factor $E_{i',j'}$ for $(i'j')\in I^2$.
\end{rmk}

The notion of enriched quivers and their representations is only a tautological reformulation of the classical viewpoint on quivers. However, it turns out to be a convenient concept to describe the naturally arising twists of the quiver moduli. In the following, a Galois extension of rings simply denotes a regular \'etale extension $R/R'$ such that $\Spec R' \to \Spec R$ is a torsor under a finite abstract group.

\begin{definition}
Let $R'/R$ be a Galois extension of rings with Galois group $\Gamma$. A $\Gamma$-twisted $R$-enriched quiver is given by an $R'$-enriched quiver $Q_{R'}$ together with an element 
$\tau \in H^1_{\Gal}(\Gamma,\Aut(Q_{R'}))$.
\end{definition}

Not every enriched quiver can be twisted in a non-trivial manner, as non-triviality of the twist is closely linked to a non-trivial permutation of vertices.

\begin{lemma}[No twists for multiple Jordan quivers]\label{lemma:no-twist}
Let $R'/R$ be a Galois-extension of a \emph{local ring} $R$ and $Q_{R'}$ be an $R'$-enriched quiver with $|Q_0| = 1$. Then, we have $H^1_{\Gal}(\Gamma,\Aut(Q_{R'})) = 0$.    
\end{lemma}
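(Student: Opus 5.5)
With $|Q_0|=1$ there is a single vertex $x$, and $Q_{1,R'}=E_{xx}=:E$ is one $R'$-module. The plan is to identify $\Aut(Q_{R'})$ with $\GL_{R'}(E)$ and then reduce the vanishing of $H^1$ to Galois descent for modules over the local ring $R$.

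First, the identification. By the remark describing automorphisms, an automorphism is an $R'$-linear automorphism of the total edge module $E$ sending each graded piece $E_{ij}$ onto some piece $E_{i'j'}$. Since $Q_0^2=\{(x,x)\}$, this condition is vacuous, so $\Aut(Q_{R'})=\Aut_{R'}(E)$. The Galois group $\Gamma$ acts on it through the semilinear action on $E$. Here $E$ is understood with its natural $\Gamma$-semilinear structure, so that $E=E_0\otimes_R R'$ for an $R$-module $E_0$; this holds in particular when $Q_{R'}$ is free, or more generally when it is base-changed from $R$.

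Second, the descent step. A class $\tau\in H^1(\Gamma,\GL_{R'}(E))$ is the same thing as a new semilinear $\Gamma$-action on $E$, namely a descent datum for $\Spec R'\to\Spec R$. Since $R'/R$ is finite étale Galois, it is faithfully flat, so Galois descent gives an $R$-module $E'_0$ with $E'_0\otimes_R R'\cong E$ compatibly with the twisted action. The class $\tau$ is trivial exactly when $E'_0\cong E_0$.

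Third, the comparison of $E'_0$ with $E_0$. Faithfully flat descent preserves finite projectivity, and both are of rank $n=\rank_{R'}E$. Since $R$ is local, finite projective modules are free, so $E'_0\cong R^n\cong E_0$ and $\tau$ is trivial. Equivalently, the argument amounts to the standard identity $H^1(\Gamma,\GL_n(R'))=\ker\bigl(H^1_{et}(R,\GL_n)\to H^1_{et}(R',\GL_n)\bigr)$ together with Hilbert 90 in the form "vector bundles on $\Spec R$ are trivial for $R$ local".

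The main obstacle is making precise which $\Gamma$-action on $\Aut(Q_{R'})$ is meant. I will take it to come from a chosen descent of $Q_{R'}$ to $R$, which exists because the enriched quiver is (locally) free of constant rank on the single vertex and $R$ is local. I also need to handle the possibility that $R'$ is only semilocal; this is harmless, since faithfully flat descent does not require $R'$ to be local. One further subtlety is the finite generation of $E$, which the portrait construction implicitly assumes; I will assume it.
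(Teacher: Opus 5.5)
Your proposal is correct and follows the paper's argument. The paper also identifies $\Aut(Q_{R'})$ with $\GL_{R'}(E_{xx})$ for a single vertex, then appeals to Hilbert 90 for $\GL_n$, so that forms of a free module are Zariski-locally trivial and hence trivial over the local ring $R$. Your Galois-descent formulation, including the care about which $\Gamma$-action on $\GL_{R'}(E)$ is meant, is an unpacked version of that one-line appeal.
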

\begin{proof}
Since $Q_{R'}$ has a single vertex $x$ with $E_{xx} = E'$ as $R'$-module of arrows, we have $\Aut(Q_{R'}) = \GL_{R'}(E')$. It then follows from Hilbert 90 that the degree $1$ Galois cohomology group is Zariski-locally zero, as asserted above. In particular, for a local ring $R$, this Galois cohomology group vanishes.    
\end{proof}

\begin{rmk}
The lemma above serves as justification for introducing the notion of enriched quivers. The symmetry group of an $g$-Jordan quiver is the symmetric group $S_g$. The Galois cohomology group $H^1_{\text{\'et}}(R,S_g)$ is often non-zero, for instance this is the case for $R = \mathbb{F}_q$. As the above result shows, embedding $S_g$ into the linear group, leads to annihilation of this Galois cohomology class.      
\end{rmk}

\begin{definition} In the following we denote by $Q_R$ an $R$-enriched quiver and by $(Q_{R'},\tau)$ a pair consisting of an $R'$-enriched quiver and a Galois cocycle $\tau \in H^1(\Gamma,\Aut(Q_{R'}))$.
\begin{enumerate}
    \item[(a)] We denote by $RQ_R$ the $R$-linear path algebra of $R$, which is defined to be the $R$-algebra generated by the set $\{e_x \ |\ x \in Q_0\}$ and the $R$-module $E_R=\bigoplus_{(x,y)\in Q_0^2} E_{xy}$ subject to the following relations
    $$(e_x)^2 = e_x,\text{ } e_x\cdot{}v = v\text{ for }v \in E_{xy},\text{ } v \cdot{} e_y = v\text{ for }v\in E_{yx}\text{ and }w\cdot{}v = 0\text{ if }v\in E_{xy}\text{ and }w\in E_{zz'}\text{ for }y\neq z.$$ 
    \item[(b)] We denote by $RQ_{R'}^{\tau}$ the twisted path algebra. It is defined as the form of the path algebra $R'Q_{R'}$ given by the cocycle induced by the map $H^1(\Gamma,\Aut(Q_{R'})) \to H^1(\Gamma,\Aut(R'Q_{R'}))$. 
    \item[(c)] The $R$-linear abelian category $\Rep^{\tau}(Q_{R'})$ is defined to be the category of $RQ^{\tau}_{R'}$-modules.
\end{enumerate}    
\end{definition}

In the following, we will also denote the twisted path algebra by the shorthand $RQ^{\tau}$, and thereby suppress the $R'$-enrichment from notation. In this vein, we will also suppress the appearance of $R$ and $R'$ in the linear quivers in other constructions, whenever there is little risk that this abuse of notation might cause confusion.

\begin{definition}[Twists of moduli of quiver representations]\label{defi:twisted-rep}
Given a triple $(R'/R, Q_{R'}, \tau)$ consisting of a Galois extension $R'/R$, an $R'$-enriched quiver, and a twist $\tau \in H^1(\Gamma,\Aut(Q_{R'})$, we define $\BM_{Q_R,d}^{\tau}$ to be the stack, which assigns to an $R$-algebra $S$ the groupoid of $(RQ_{R'}^{\tau} \otimes_R S)$-representations on locally free $S$-modules with dimension vector $d$. 
\end{definition}

Another perspective on twisted representations is provided by the following explicit description of the stack of twisted representations $\BM^{\tau}_{Q_{R'}}$ as a \emph{twist} of the stack $\BM_{Q_R}$.

It is well-known from the classical viewpoint on representations of quivers that the stack $\BM_{Q_R,d}$ of $Q$-reprentations of a fixed dimension vector $d$ can be described as an explicit quotient stack
$$\BM_{Q_R,d} = [\Ab_{Q_R, d}/\Gb_{d}],$$
where $\Gb_{d} = \prod_{i \in Q_0} \GL_{d_i}$ and $\Ab_{Q_R,d}$ denotes the affine space
$$\prod_{(x,y)\in Q_0^2}\Spec \Sym \Hom(R^{d_y},R^{d_x}\otimes E_{xy}).$$

\begin{construction}\label{const:twisted-rep-stack}
For $R'/R$ as above, and $Q_{R'}$ an $R'$-enriched quiver, we obtain a natural Galois action of $\Gamma=\Gal(R'/R)$ on $\Gb_{Q_{R'},d}$ and $\Ab_{Q_{R'},d'}$, which is compactible with the action
$$\Gb_{d} \times \Ab_{Q_{R'},d} \to \Ab_{Q_{R'},d}.$$
It therefore makes sense to twist both the group scheme and the action by a Galois cocycle $\tau \in H^1(\Gamma,\Aut(Q_{R'}))$. We will denote the resulting quasi-split reductive group scheme over $R$ by $G^{\tau}_{Q,d}$ and the resulting affine space obtained by twist by $\Ab_{Q,d}^{\tau}$. One then has
\begin{equation}\label{eqn:mom-map}
\BM^{\tau}_{Q_{R},d} \cong [\Ab_{Q,d}^{\tau}/G^{\tau}_{Q,d}].
\end{equation}
\end{construction}

Given a quiver $Q$, the Euclidean space $\Rb^{Q_{0}}/\Rb = \Stab(Q)$ is referred to as the space of stability conditions for $Q$. This space comes with a wall and chamber decomposition, which should be thought of as a partition into convex equivalence classes $[\theta]$. For $\theta,\theta' \in \Stab(Q)$ satisfying $[\theta]=[\theta']$ we obtain that the resulting sub-categories of semistable objects
$$\Rep^{\theta-ss}(Q) = \Rep^{\theta'-ss}(Q)$$
agree.

If $(Q_{R'},\tau)$ is a linear enrichment of the quiver $Q$ together with a twist $\tau$ with respect to a Galois extension $R'/R$,  we may consider the resulting action of $\Gal(R'/R)$ on the set of vertices $Q_0$. This action induces an action of the same group on $\Stab(Q)$. For any $\theta$ with $[\theta]$ being a fixed point for the action, we may define
$$\Rep_{\theta-ss}^{\tau}(Q_R) \subset \Rep^{\tau}(Q_{R'}).$$
For any dimension vector $d \in (\Zb^{Q_0})^{\Gal(R'/R)}$ the moduli stack of objects in this $R$-linear category is denoted by $\BM_{Q,d}^{\theta,\tau}$.

\begin{lemma}
The $R$-stack $\BM_{Q,d}^{\theta,\tau}$ possesses an adequate moduli space $\M_{Q_R,d}^{\theta,\tau}$. There is an isomorphism $(\M_{Q,d}^{\tau})_{R'} \simeq \M_{Q_{R'},d}$.
\end{lemma}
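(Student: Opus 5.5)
The plan is to realise $\BM_{Q,d}^{\theta,\tau}$ as a GIT-type quotient stack and construct the adequate moduli space as a twisted GIT quotient, using Galois descent along $R'/R$.

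First, by Construction \ref{const:twisted-rep-stack}, $\BM^{\tau}_{Q,d}\cong[\Ab^{\tau}_{Q,d}/G^{\tau}_{Q,d}]$. The stability $\theta$ is chosen with $[\theta]$ Galois-fixed. We may replace $\theta$ within its class by a Galois-invariant integral representative, for instance the average of the Galois orbit, cleared of denominators; convexity of the class guarantees it stays in $[\theta]$. The associated character $\chi_\theta=\prod_x\det(g_x)^{\theta_x}$ of $\Gb_d$ over $R'$ is then $\Gamma$-invariant, and also invariant under the twisting automorphisms. This uses that $\theta$ is constant on orbits of the vertex permutation induced by $\tau$, while the linear parts act on edges and not on the $\GL_{d_x}$-determinants. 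Hence $\chi_\theta$ descends to a character of $G^{\tau}_{Q,d}$ over $R$.

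The $\theta$-semistable locus $\Ab^{\tau,\theta-ss}_{Q,d}$ is the open subscheme defined by the nonvanishing of semi-invariants of weight $\chi_\theta^n$ for $n>0$. Over $R'$ it is the usual King semistable locus, and it is Galois stable, so it descends. We then set
\[
\M^{\theta,\tau}_{Q_R,d}=\mathrm{Proj}\bigoplus_{n\ge0}\Gamma(\Ab^{\tau}_{Q,d},\Oc)^{G^{\tau}_{Q,d},\chi_\theta^n}.
\]
Alvarez-Mantilla's theory of adequate moduli spaces gives that $[\Ab^{\tau,\theta-ss}/G^{\tau}]\to\M^{\theta,\tau}$ is an adequate moduli space over an arbitrary base ring. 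This applies because $G^\tau$ is reductive, in fact geometrically reductive over $R$ by Seshadri. Equivalently, one may cite the general existence theorem for stacks with affine diagonal that are $\Theta$-reductive and S-complete in the adequate setting, but the GIT route is more concrete.

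For the base change statement, formation of invariants commutes with the flat base change $R\to R'$: taking invariants is a kernel, and flat base change preserves kernels. Moreover $(\Ab^{\tau}_{Q,d})_{R'}\cong\Ab_{Q_{R'},d}$ and $(G^\tau)_{R'}\cong\Gb_d$, compatibly with the actions and characters, since $\tau$ becomes trivial over $R'$. Thus the graded rings of semi-invariants agree after $\otimes_RR'$, and Proj commutes with this base change. This yields $(\M^{\tau}_{Q,d})_{R'}\simeq\M_{Q_{R'},d}$. Alternatively, one can invoke that adequate moduli spaces are compatible with flat base change.

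The main obstacle is the identification of the stack-theoretic semistable locus, defined via the category $\Rep^\tau_{\theta-ss}$, with the GIT semistable locus over a general base $R$, rather than over a field. Here I would check this fibrewise on geometric points, applying King's theorem over algebraically closed fields, since both loci are open and semistability is a pointwise condition. A secondary subtlety is choosing a Galois- and twist-invariant representative of $[\theta]$, which the averaging argument above handles.
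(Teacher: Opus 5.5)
Your proof is correct, but it takes a different route from the paper.

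\textbf{The paper's route.} The paper argues purely by descent. Over $R'$ the cocycle $\tau$ is trivial, so the base change of $\BM^{\theta,\tau}_{Q,d}$ is the untwisted stack of $\theta$-semistable representations, whose adequate moduli space is the King GIT quotient. The Galois descent datum on the stack transports, via the universal property of adequate moduli spaces, to a descent datum on that quotient along the finite \'etale cover $\Spec R'\to\Spec R$, and this datum is effective. Since being an adequate moduli space is local on the target, the descended space is an adequate moduli space over $R$, and $(\M^{\tau}_{Q,d})_{R'}\simeq\M_{Q_{R'},d}$ holds by construction.

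\textbf{Your route.} You construct the quotient directly over $R$ as a twisted GIT quotient. This obliges you to pick a Galois-invariant representative of $[\theta]$, descend the King character to $G^{\tau}_{Q,d}$, and check the base-change isomorphism by hand via flat base change of semi-invariants. In exchange you get an explicit model: $\M^{\theta,\tau}_{Q_R,d}$ is $\mathrm{Proj}$ of a ring of semi-invariants, hence projective over the affine twisted quiver variety $\N^{\tau}_{DQ,d}$. That is exactly the structure used later for the maps $p_\theta$ in Proposition \ref{wc} and $c$ in Lemma \ref{lemma:key}. The paper's route is shorter and never needs an invariant character.

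\textbf{Points to make explicit.}
\begin{enumerate}
\item Since $\theta$ may be real, first choose a rational point in its chamber for the fixed $d$; only finitely many rational walls are relevant. Then average over the Galois orbit and normalise so that $\theta\cdot d=0$. The normalisation is needed for $\chi_\theta$ to be trivial on the diagonal $\Gb_m$.
\item In your fibrewise comparison of semistable loci, you need that the GIT-semistable locus over $R$ restricts to the GIT-semistable locus on each geometric fibre. This is not automatic: it uses that a semi-invariant on a fibre lifts, up to a power, to a semi-invariant over $R$. That is Seshadri's geometric reductivity over an arbitrary base, or equivalently Alper's compatibility of adequate moduli spaces with base change up to adequate homeomorphism.
\item The theory of adequate moduli spaces you invoke is due to Alper.
\end{enumerate}
The paper needs the same identification of semistable loci over $R'$ and leaves it implicit in ``by construction''.
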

\begin{proof}
The asserted existence holds \'etale-locally by construction as a twist. The statement over the base $R$ follows from the universal property of adequate moduli spaces and a descent argument.
\end{proof}

In this paper, we will only consider twisted quiver representations where $\Gal(R'/R)$ is a finite cyclic group. In this case, the twist $\tau$ essentially corresponds to the choice of an element $\phi$ of a certain order in the group $\Aut(Q)$ of diagram automorphisms of the quiver. Furthermore, it will be natural to consider the following class of diagram automorphisms $\sigma \in \Aut(Q)$.
\begin{definition}\label{defi:impartial}
Let $G = (V,E)$ be a graph and $\sigma \in \Aut(G)$ be a graph automorphism. We say that $G$ is $\sigma$-\emph{impartial} if for every pair of distinct vertices $i,j \in V$ such that $i\neq \sigma(j)$ we also have the equation 
$$ E(i,j) = E(i,\sigma(j)) ,$$
where $E(i,j)=E(j,i)$ denotes the set of undirected edges $i \to j$ in $Q$ and we also require that the equation
$$E(i,i) =  E(i,\sigma^{\ell}(i))  -1 =  E(\sigma^{\ell}(i),\sigma^{\ell}(i))  $$
holds whenever $\sigma^{\ell}(i) \neq i$. 
\end{definition}

Impartiality of $\Gamma$ with respect to $\sigma$ implies that for two distinct vertices $i \neq j$ the number of edges connecting them only depends on the $\langle\sigma\rangle$-orbit of $i$ and $j$. Furthermore, it stipulates the multiplicity of loops at a vertex $i$ to be $1$ less than the multiplicity of edges connecting it to other vertices in the $\langle \sigma \rangle$-orbit. The latter relation reflects the fact that the Euler characteristic is  locally constant in flat families. We refer the reader to the end of the proof of Lemma \ref{lemma:sigma-impartial} for the geometric context.

\begin{example}
We give several examples and non-examples.
\begin{enumerate}
    \item Let $G$ be the graph given by a square with positive orientation. Let $\sigma$ be the rotation by angle $\pi$. Then, $G$ is $\sigma$-impartial. However, if $\sigma$ is chosen to be the rotation by $\frac{\pi}{2}$ then $\sigma$-impartiality no longer holds. This can be fixed, by replacing the square by a complete graph on four vertices.   
    \item Let $G$ be a complete graph on $n$-vertices and $\sigma$ an arbitrary permutation of the vertices. Then, $G$ is $\sigma$-impartial.
    \item Let $G$ be the Dynkin diagram $A_5$ with appropriately chosen orientations (e.g., all edges are oriented to point towards the centre of the graph). Let $\sigma$ be a non-identity automorphism given by a reflection in a line passing perpendicularly through the centre. Then, $G$ is not $\sigma$-impartial.
\end{enumerate}
\end{example}

The notion of $\sigma$-impartiality is related to a desirable property for twisted enriched quivers.

\begin{definition}\label{defi:sigma-impartial-linear}
Let $R'/R$ be a cyclic Galois  extension with $\sigma$ a generator of its Galois group. A pair $(Q_{R'},\tau)$ will then be called $\sigma$-impartial, if the induced automorphism on the portrait $G$ of $Q_{R'}$ (see Definition \ref{defi:graph}) is $\sigma$-impartial.  
\end{definition}

\subsection{Twists of Nakajima quiver varieties}

The construction of Nakajima quiver varieties can also be performed within the framework of $R$-enriched quivers and their twists. The reader familiar with the classical theory will notice that we do not include framings at vertices. Framed quiver varieties will not play a role in this paper. Thus, we construct quiver varieties either by following the algebraic process
\[
\text{Quiver $Q$} \rightsquigarrow \text{Double quiver $DQ$} \rightsquigarrow \text{Preprojective algebra }\Pi_Q^0 = kDQ/(\text{preproj. relations})
\]
or, by defining the Nakajima quiver variety $N_{DQ}$ attached to a quiver $Q$ and a dimension vector $d$ to be the GIT quotient
$$\mu^{-1}_{Q,d}(0)//\Gb_{d},$$
where $$\mu\colon T^*\Ab_{Q,d} \to \mathfrak{g}_{d}^{\vee} = \Lie(\Gb_d)^{\vee}$$ 
is the momentum map\footnote{We remind the reader that the commonly used word \emph{moment map} appears to be a mistranslation of the French term \emph{application moment}.} for the $\Gb_d$-action on $T^*\Ab_{Q,d}$. This GIT quotient also arises as the adequate moduli space of the Artin stack given by the cotangent stack $T^*\BM_{Q,d}$.

Two distinct quiver structures $Q_1$ and $Q_2$ on the same set of vertices may give rise to the same double quiver $DQ_1 = DQ_2 = DQ$. The resulting quiver variety $\N_{DQ}$  and stack $\BN_{DQ}$ only depend on the double quiver $DQ$ and are insensitive to the choice of $Q_1$ over $Q_2$. 

For the case of quivers enriched in locally free modules (and their twists), it is therefore natural to suppress reference to $Q$ altogether, when defining Nakajima quiver varieties. The starting point is the linear analogue of a double quiver, which we will refer to as \emph{symplectic quiver}.

\begin{definition}
Let $DQ_R=(DQ_{0},DQ_{1,R})$ be a quiver enriched in locally free $R$-modules. A \emph{symplectic structure} $\omega$ on $Q_R$ is given by a graded alternating $2$-form on the $Q_0$-graded edge module $Q_{1,R} = \bigoplus_{(x,y) \in Q_0^2} E_{xy}$, such that for every distinct pair $x\neq y$, $\omega$ yields an $R$-linear isomorphism
$$E_{xy} \simeq E_{yx}^{\vee},$$
and for every $x \in Q_0$, $\omega$ restricts to an $R$-linear non-degenerate form on $E_{xx}$. We denote by $\Sp_{\omega}(Q_R) \subset \Aut(Q_R)$ the subgroup of linear automorphisms preserving the symplectic form $\omega$.
\end{definition}

Every symplectic quiver enriched in locally free $R$-modules $(DQ_R,\omega)$ possesses a Lagrangian subquiver $Q_R$, which we simply understand to be a graded Lagrangian subspace of the edge module 
$$L \subset Q_{1,R}=\bigoplus_{(x,y)\in Q_{0}^2} E_{xy}.$$
The symplectic form then yields a canonical isomorphism $Q_{1,R} = L \oplus L^{\vee}$. Such a Lagrangian $L$ can be viewed as the edge module of a linear quiver $Q_R$, whose double quiver (in the enriched sense) is $DQ_R$.

For a triple $(DQ_{R'},\omega,\tau)$, where $\tau \in H^1(\Gamma,\Sp_{\omega}(DQ_{R'}))$, the associated Nakajima quiver stack and variety can be twisted by the cocycle $\tau$, for the simple reason that they are evidently $\Aut(Q_{R'})$-equivariant. More precisely, they are defined in the following manner. 

\begin{definition}
For a triple $(DQ_{R'},\omega,\tau)$, where $\tau \in H^1(\Gamma,\Sp_{\omega}(DQ_{R'}))$ we define the twisted Nakajima quiver stack $\BN^{\tau}_{DQ_R'}$ to be the $R$-stack given by the closed substack
$$[\mu^{\tau,-1}_{Q_{R'},d}(0)/G_d^{\tau}] \subset [\Ab_{DQ,d}^{\tau}/G_{d}^{\tau}],$$
where $\mu^{\tau}$ is the momentum map from \eqref{eqn:mom-map}.
We refer to the GIT quotient $\mu^{\tau,-1}_{Q_{R'},d}(0)//G_d^{\tau}$ as the twisted Nakajima quiver variety $\N_{DQ,d}^{\tau}$.
\end{definition}

By virtue of construction, $\N_{DQ,d}^{\tau}$ is a form of the affine $R$-scheme $\N_{DQ,d}$ and thereby itself an affine $R$-scheme.

\begin{rmk}
    It would also be possible to define a twisted preprojective algebra $\Pi_{Q}^{\tau}$, as a quotient of the $R$-linear (twisted) path algebra of an appropriately defined double quiver and define the stack $\BN_{DQ}^{\tau}$ as the stack of $\Pi^{\tau}_Q$-representations.
\end{rmk}

\begin{definition}
For every $\theta \in \Stab_Q$, we denote the resulting open substack of $\theta$-semistable representations of the preprojective algebra and dimension vector $d$ by $\BM^{\theta}_{Q,d}$. If $d$ and $\theta$ are fixed by the $\Gamma$-action, then we obtain a well-defined open substack
$\BN_{DQ,d}^{\theta,\tau} \subset \BN_{DQ,d}^{\tau}.$
\end{definition}



\subsection{Semisimplicity of representations of reductive groups} \label{SemisimplicitySS}
Let $\bar k$ be an algebraically closed field of characteristic $p>0$ and $G$ a (connected) reductive group over $\bar k$. 
\begin{definition}
    Let $T$ be a maximal torus of $G$ and $B$ a Borel subgroup of $G$ containing $T$. This determines a set of roots $\Phi$ of $G$ as well as a subset $\Phi^+ \subset \Phi$ of positive roots. For $\alpha \in \Phi$ we denote by $\alpha^\vee$ the coresponding coroot.
    \begin{enumerate}
        \item For any character $\lambda \colon T \to \Gb_m$ let
        \begin{equation*}
            n_G(\lambda) \defeq \sum_{\alpha \in \Phi^+}\langle \lambda,\alpha^\vee \rangle.
        \end{equation*}
        \item If $V$ is a finite-dimensional representation of $G$ over $\bar k$, then we let $n_G(V)$ be the maximum of the integers $n_G(\lambda)$ over all weights $\lambda$ of $T$ appearing in $V$.
    \end{enumerate}
\end{definition}

\begin{example} \label{NComp}
    Let $(V,\psi)$ be a non-zero symplectic space. Then the integer $n_{\Sp_\psi}(V)$ associated to the tautological representation $V$ of the associated symplectic group $\Sp_\psi$ is given by $n_{\Sp_\psi}(V)=\dim(V)-1.$

    This can be computed as follows: We may assume that $\psi$ is the standard symplectic form on $k^{2N}$ for some $N\geq 1$ defined by the matrix
    \begin{equation*}
        J=
        \begin{pmatrix}
            0 & 1_N \\
            -1_N & 0
        \end{pmatrix}.
    \end{equation*}
    We consider the maximal torus $T$ of $\Sp_\psi \subset \GL_{2N}$ consisting of diagonal matrices with invertible entries $(t_1,\hdots,t_N,t_1^{-1},\hdots,t_N^{-1})$ as well as the maximal torus $B$ given by the stabilizer of the maximal isotropic subspace $k^N \oplus 0 \subset k^{2N}$. 
    
    Then, if for $1\leq i \leq N$ we let $e_i \colon T \to \Gb_m$ be the projection onto the $i$-th diagonal factor, the resulting positive roots are given by $e_i - e_j$ for all $i<j$ and $-e_i-e_j$ for all $i\leq j$ with coroots $(e_i - e_j)^\vee=e_i^* - e_j^*$ and $(-e_i-e_j)^\vee=-e_i^*-e_j^*$ for $i<j$ and $(2e_i)^\vee=e_i^*$.

    So for any weight $\lambda=\sum_{s=1}^N \lambda_s e_s\colon T \to \Gb_m$ we find
    \begin{equation*}
        n_{\Sp_\psi}(\lambda)=\sum_{i<j}-2\lambda_j+\sum_i \lambda_i=\sum_{i=1}^N (-2i+1)\lambda_i.
    \end{equation*}

    The weights of $T$ in the tautological representation $V$ are the homomorphisms $\pm e_i$ for all $1\leq i \leq N$. So we find
    \begin{equation*}
        n_{\Sp_\psi}(V)=n_{\Sp_\psi}(-e_N)=2N-1.
    \end{equation*}
\end{example}
\begin{definition}
    A subgroup $\Gamma \subset G(\bar k)$ is $G$-cr if for any parabolic subgroup $P$ of $G$ satisfying $\Gamma \subset P(\bar k)$, the group $\Gamma$ is already contained in $L(\bar k)$ for some Levi subgroup $L$ of $P$.
\end{definition}

\begin{theorem}[{Eugene, c.f. \cite[Theorem 7]{SerreMoursund}}] \label{EugeneThm}
    Let $H \subset G$ be a reductive subgroup and $p \geq h_G$, where $h_G$ is the Coxeter number of $G$. Then $H(\bar k)$ is $G$-cr.
\end{theorem}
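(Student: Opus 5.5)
This is \cite[Theorem 7]{SerreMoursund}, and the paper cites it rather than reproving it. If I had to supply an argument, I would follow the cohomological route, assuming $H$ is connected (which is the case relevant here). Let $P\subset G$ be a parabolic subgroup with $H(\bar k)\subset P(\bar k)$. Choose a Levi decomposition $P=L\ltimes U$ and write $\pi\colon P\to L$ for the projection. The subgroup $\pi(H)$ lies in $L$, and $H$ is the graph of a $1$-cocycle $H\to U$ twisting $\pi(H)$. So $H$ is $U(\bar k)$-conjugate into $L$ if and only if the class of this cocycle vanishes in the non-abelian cohomology $H^1(H,U)$. Since every Levi subgroup of $P$ is a $U(\bar k)$-conjugate of $L$, the $G$-cr property for $H$ reduces to showing $H^1(H,U)=0$ for every such $P$.

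To handle the non-abelian group $U$, I would use the filtration $U=U_1\supset U_2\supset\cdots$ of $U$ by normal subgroups defined by root height with respect to a cocharacter defining $P$. Each quotient $U_i/U_{i+1}$ is a vector group: a linear $L$-representation whose weights are roots of $G$ occurring in $\Lie(U)$. I would then argue by induction along this filtration using the long exact sequences in non-abelian cohomology. It suffices to prove
\[
H^1(H,V)=0 \quad\text{for } V=U_i/U_{i+1} \text{ viewed as an $H$-module via } \pi .
\]
Twisting $U_{i+1}$ by cocycles that have already been trivialised keeps us within the same family of modules.

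For this vanishing I would use Jantzen's semisimplicity criterion: if $W$ is a representation of a connected reductive group $H$ with $n_H(W)<p$, then $W$ is semisimple. Any class in $H^1(H,V)$ gives an extension $0\to V\to E\to \bar k\to 0$, and $E$ has the weights of $V$ together with $0$. Hence $n_H(E)=\max(n_H(V),0)$. If $n_H(V)<p$, then $E$ is semisimple, the extension splits, and the class vanishes.

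The main obstacle is the weight bound: showing $n_H(V)<p$ whenever $p\geq h_G$. The weights of $V$ are restrictions to a maximal torus of $H$ of roots $\alpha$ of $G$. I would compare $n_H$ with $n_G$ by choosing the Borel subgroup of $H$ inside a Borel subgroup of $G$, so that positive coroots of $H$ are sums of positive coroots of $G$ with dominant control; the aim is an estimate $n_H(\alpha|_{T_H})\leq n_G(\alpha)\le$ (a bound of order $h_G$). A crude estimate only gives about $2h_G-2$, which is not good enough. To reach the sharp threshold $p\geq h_G$, I expect one has to use that only roots of $U$ occur, and possibly pass to the associated graded for a finer filtration, as Serre does, rather than working with all roots of $G$. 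This is the step where I would follow Serre's Moursund notes closely.
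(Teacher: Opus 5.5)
Your citation of \cite[Theorem 7]{SerreMoursund} is exactly what the paper does; the paper gives no argument of its own. The sketch you add, however, cannot be completed along the route you describe, and the obstacle is not a missing factor in a constant.

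Your reduction to the vanishing of $H^1(H,U_i/U_{i+1})$ is fine, and so is your use of a weight criterion. That criterion is Serre's Theorem~\ref{SerreThm} applied to $H$ itself; Jantzen's original criterion is $\dim W\le p$. What fails is the bound you want, $n_H(U_i/U_{i+1})<p$ whenever $p\ge h_G$. The comparison $n_H(\alpha|_{T_H})\le n_G(\alpha)$ behind your crude estimate fails too. Both are false in characteristic $p$: a subgroup embedded through a Frobenius twist can have $2\rho_H^\vee$ much longer than $2\rho_G^\vee$.

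Concretely, take $G=\mathrm{SL}_4$ with $p\ge 5$, so that $p\ge h_G=4$. Let $P$ be the parabolic with Levi $S(\GL_2\times\GL_2)$, and let $H=\{\mathrm{diag}(g,F(g))\}$ with $F$ the Frobenius of $\mathrm{SL}_2$. This is a closed connected reductive subgroup of $P$, and its coroot is $t\mapsto\mathrm{diag}(t,t^{-1},t^p,t^{-p})$. So the root $\alpha=\epsilon_1-\epsilon_4$ of $U$ satisfies $n_H(\alpha|_{T_H})=p+1$. This exceeds $n_G(\alpha)=2h_G-2=6$ as soon as $p\ge 7$. Moreover, $U$ is abelian and $\Lie(U)|_H\cong V\otimes (V^{(1)})^{*}\cong V\otimes V^{(1)}\cong L(p+1)$, which is simple by Steinberg's tensor product theorem. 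Hence $n_H(\Lie U)=p+1\ge p$, and no finer filtration or closer look at the roots of $U$ can bring it below $p$.

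In this example the relevant $H^1$ does vanish. For $p\ge 5$ the weight $p+1$ is not linked to $0$ under the dot action of the affine Weyl group. Alternatively, the extension in question has dimension $5\le p$, so Jantzen's dimension criterion applies. Your criterion $n_H(E)<p$ simply cannot see this. The missing ingredient is therefore an input insensitive to Frobenius twists, such as a dimension bound, a direct linkage argument, or further structural input on reductive subgroups, not a sharper root estimate.

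Finally, restricting to connected $H$ is necessary, not merely convenient. A cyclic group of order $p$ generated by a nontrivial unipotent element of $\GL_2$ has trivial identity component. Yet it is not $\GL_2$-cr, even though $p\ge 2=h_{\GL_2}$.
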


\begin{theorem}[{Serre, c.f. \cite[Theorem 6]{SerreMoursund}}] \label{SerreThm}
    Let $V$ be a representation of $G$ with $n_G(V) < p$. Then for any $\Gamma \subset G(\bar k)$ which is $G$-cr, the resulting $\Gamma$-action on $V$ is semisimple.
\end{theorem}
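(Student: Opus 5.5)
The plan is to follow Serre's strategy: reduce from the abstract group $\Gamma$ to a reductive subgroup of a Levi subgroup that lies in no proper parabolic, and then invoke a semisimplicity criterion for representations of low height. Fix the representation $\rho\colon G \to \GL(V)$ and a $\Gamma$-stable subspace $W \subset V$; we must produce a $\Gamma$-stable complement.

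\textbf{Step 1: pass to the Zariski closure.} Let $H \subset G$ be the Zariski closure of $\Gamma$. Two things are unchanged by this replacement.
\begin{enumerate}
\item Since parabolic and Levi subgroups are closed, $\Gamma \subset P(\bar k)$ if and only if $H \subset P$, so $H$ is again $G$-cr.
\item A subspace of $V$ is $\Gamma$-stable if and only if it is $H$-stable, so semisimplicity of $V$ for $\Gamma$ and for $H$ is the same statement.
\end{enumerate}
We may therefore assume that $\Gamma = H(\bar k)$ for a closed subgroup $H$.

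\textbf{Step 2: reduce to the $G$-irreducible case.} Choose a parabolic $P \supset H$ that is minimal among parabolics containing $H$. By the $G$-cr property, $H \subset L$ for some Levi subgroup $L$ of $P$. By minimality of $P$, $H$ lies in no proper parabolic of $L$, so $H$ is $L$-irreducible.

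To replace $G$ by $L$ we need $n_L(V) \le n_G(V)$. Choose the Borel of $G$ inside $P$ so that $\Phi_L^+ \subset \Phi_G^+$. For a weight $\lambda$ of $V$, the missing terms $\langle\lambda,\alpha^\vee\rangle$ with $\alpha \in \Phi^+_G\setminus\Phi^+_L$ need not be non-negative. I would handle this by applying the Weyl group $W_L$, which permutes $\Phi^+_G\setminus\Phi^+_L$: replacing $\lambda$ by a suitably dominant $W_G$-conjugate, which is again a weight of $V$, makes all $\langle\lambda,\alpha^\vee\rangle\ge 0$. This is a short root-system check.

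\textbf{Step 3: structure of an irreducible $H$.} Assume now that $H$ is $G$-irreducible. The unipotent radical $R_u(H^\circ)$ is normalised by $H$. If it were nontrivial, the Borel--Tits construction would yield a proper parabolic of $G$ containing $N_G(R_u(H^\circ)) \supset H$, contradicting irreducibility. Hence $H^\circ$ is reductive.

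The bound $n_G(V)<p$ implies that every unipotent $u \in G(\bar k)$ satisfies $(\rho(u)-1)^p = 0$. In Serre's formulation, the unipotent elements then have well-defined logarithms and one-parameter subgroups $t \mapsto u^t$. This lets us replace $H$ by its saturation $H^{\mathrm{sat}}$, which has the following properties:
\begin{enumerate}
\item it stabilises the same subspaces of $V$ as $H$;
\item it is still $G$-irreducible;
\item its identity component is reductive.
\end{enumerate}

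\textbf{Step 4: semisimplicity.} First show that $V|_{H^\circ}$ is semisimple. The height of $V$ as an $H^\circ$-representation is bounded by $n_G(V)<p$, using the same root comparison as in Step 2. A Jantzen-type criterion, that representations of a reductive group whose weights satisfy $\sum_{\alpha>0}\langle\lambda+\rho,\alpha^\vee\rangle$ small relative to $p$ are semisimple, then gives semisimplicity of the restriction to $H^\circ$.

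The remaining issue is the component group $H/H^\circ$, whose order may be divisible by $p$, so Clifford theory alone does not produce a complement to $W$. I expect this to be the main obstacle. My first attempt would be to argue by contradiction using a non-split extension:
\begin{enumerate}
\item Suppose $V$ is not semisimple for $H$. Then there is a non-split extension of $H$-modules inside $V$, and it is split over $H^\circ$.
\item Such an extension defines a nonzero class in $H^1(H/H^\circ,\cdot)$ of a $p$-group quotient.
\item This class should produce a cocharacter of $G$, through the $\mathbb{Z}$-grading on $V$ obtained from the splitting over $H^\circ$, whose associated parabolic $P_\lambda$ contains $H$.
\item This contradicts $G$-irreducibility.
\end{enumerate}
This is essentially the point where Serre uses the bound $n_G(V)<p$ a second time, to ensure that $P_\lambda$ is a genuine proper parabolic of $G$.
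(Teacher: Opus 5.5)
The paper gives no proof of this statement; it is quoted from Serre's Moursund lectures. So your sketch has to stand on its own, and as written it does not.

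\textbf{Main gap: the component group in Step~4.} Items (iii)--(iv) assume that a non-split extension, or a class in $H^1$ of the component group, yields a cocharacter $\lambda$ of $G$ with $H\subset P_\lambda$. Nothing in your sketch produces such a $\lambda$. The splitting over $H^\circ$ only gives a cocharacter of $\GL(V)$, whose parabolic is the stabiliser of $W$. Transporting it back to a cocharacter of $G$ is exactly the content of the theorem, so this step is circular rather than merely incomplete.

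The missing idea is that the saturation you already introduced in Step~3 settles the matter. For the Zariski-closed saturated group $H$, the index $[H:H^\circ]$ is \emph{prime to} $p$, contrary to what you expect. The argument runs as follows:
\begin{enumerate}
\item Every unipotent $u\in H$ lies on the connected curve $t\mapsto u^t$ in $H$ through $1$, so $u\in H^\circ$.
\item For arbitrary $g=g_sg_u\in H$, the image of $g$ in $H/H^\circ$ is therefore that of $g_s$.
\item The closure $D$ of $\langle g_s\rangle$ is diagonalisable, with $D^\circ\subset H^\circ$ and $|D/D^\circ|$ prime to $p$. Hence every element of $H/H^\circ$ has order prime to $p$.
\item Averaging an $H^\circ$-equivariant projection $V\to W$ over $H/H^\circ$ then gives an $H$-stable complement.
\end{enumerate}

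\textbf{The height bound for $H^\circ$ in Step~4.} It does not follow from ``the same root comparison as in Step~2''. $H^\circ$ is not a Levi subgroup, so its roots and coroots are not those of $G$. For general connected reductive subgroups the inequality $n_{H^\circ}(V)\le n_G(V)$ actually fails. For example, $g\mapsto g\otimes g^{[p]}$ embeds $\mathrm{SL}_2/\{\pm1\}$ in $\mathrm{SL}_4$ as an irreducible, hence $\mathrm{SL}_4$-cr, connected reductive subgroup. Its weights on $k^4$ are $\pm1\pm p$, so its height is $p+1$, whereas $n_{\mathrm{SL}_4}(k^4)=3$. Saturation is what rules this out: here it enlarges the subgroup to $\mathrm{SL}_2\otimes\mathrm{SL}_2$, of height $2$. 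So you must give an argument that genuinely uses saturation before a Jantzen-type (bottom alcove) criterion can be applied to $H^\circ$.

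\textbf{Existence of $u^t$ inside $G$.} The identity $(\rho(u)-1)^p=0$ only gives a one-parameter subgroup $t\mapsto\rho(u)^t$ of $\GL(V)$. You need it to come from a homomorphism $\mathbb{G}_a\to G$ (or $\to L$) through $u$, so that $H^{\mathrm{sat}}$ stays inside $L$ and remains $L$-irreducible. That is a nontrivial result which has to be cited or proved.

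The Levi reduction in Step~2 and the reductivity of $H^\circ$ via Borel--Tits in Step~3 are fine.
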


Let now $k \subset \bar k$ be a finite subfield with algebraic closure $\bar k$ and assume that $G$ is defined over $k$. We note the following:
\begin{lemma} \label{SSLemma}
    \begin{enumerate}
        \item Let $V$ be a representation of $G$ over $k$ whose base change to $\bar k$ is semisimple. Then $V$ is semisimple over $k$.
        \item If two semisimple $G$-representations $V$ and $V'$ over $k$ become isomorphic after base change to $\bar k$, then they are isomorphic over $k$.
    \end{enumerate}
    
\end{lemma}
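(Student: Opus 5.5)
Both parts are standard facts about representations of a group scheme over a finite (hence perfect) field $k$. The plan is to reduce everything to Galois descent along $\bar k/k$ together with the linear-algebraic meaning of semisimplicity. Throughout, a $G$-representation over $k$ is the same as a comodule over $k[G]$. Base change to $\bar k$ is exact and faithful on such comodules, and $\Hom_G(V,W)\otimes_k \bar k \cong \Hom_{G_{\bar k}}(V_{\bar k},W_{\bar k})$ for finite-dimensional $V,W$, since $\Hom_G$ is the kernel of a $k$-linear map between finite-dimensional spaces.

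For (i), we take a $G$-stable subspace $W\subset V$ over $k$ and aim to produce a $G$-stable complement. Consider the restriction map $r\colon \Hom_k(V,W)\to \Hom_k(W,W)$. It is $G$-equivariant, so it induces a map on $G$-invariants $r^G\colon \Hom_G(V,W)\to \End_G(W)$; a $G$-equivariant projection onto $W$ is exactly a preimage of $\mathrm{id}_W$ under $r^G$. Taking invariants commutes with the flat base change $k\to\bar k$, so $r^G\otimes\bar k$ is the corresponding map for $V_{\bar k}\supset W_{\bar k}$. Over $\bar k$ the representation $V_{\bar k}$ is semisimple, so $W_{\bar k}$ has a $G$-stable complement, and hence $\mathrm{id}$ lies in the image of $r^G\otimes \bar k$. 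The image of $r^G\otimes\bar k$ is $(\operatorname{im} r^G)\otimes \bar k$, and $\mathrm{id}_W$ is $k$-rational. Therefore $\mathrm{id}_W\in (\operatorname{im} r^G)\otimes\bar k\cap \End_G(W)=\operatorname{im} r^G$. This yields a $G$-equivariant projection $V\to W$ over $k$, whose kernel is the desired complement.

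For (ii), we decompose $V\cong\bigoplus_i S_i^{m_i}$ and $V'\cong\bigoplus_i S_i^{m_i'}$ into simple $G$-representations over $k$. It then suffices to recover the multiplicities $m_i$ from $V_{\bar k}$. The approach is to compare the dimensions $\dim_k \Hom_G(S_i,V)=m_i\dim_k \End_G(S_i)$, where the division algebra $\End_G(S_i)$ is a finite field by Wedderburn's theorem. These dimensions are invariant under base change, since $\dim_{\bar k}\Hom_{G}(S_{i,\bar k},V_{\bar k})=\dim_k\Hom_G(S_i,V)$. From $V_{\bar k}\cong V'_{\bar k}$ we therefore get $m_i\dim\End(S_i)=m_i'\dim \End(S_i)$ for each $i$, so $m_i=m_i'$, and hence $V\cong V'$. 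Note that this requires every simple summand of $V'$ to appear among the $S_i$; we arrange this by taking the $S_i$ to range over all simple constituents of $V\oplus V'$ and allowing zero multiplicities.

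The only subtle point is the compatibility of $G$-invariants with base change, i.e.\ $(M\otimes_k\bar k)^G=M^G\otimes_k\bar k$ for a comodule $M$. This holds because $M^G$ is the equalizer of the coaction and the trivial map $M\rightrightarrows M\otimes k[G]$, and equalizers commute with flat base change. An alternative route for (ii), via Lang's theorem applied to the smooth connected group $\Aut_G(V)$, would also work, but the multiplicity argument avoids having to check connectedness.
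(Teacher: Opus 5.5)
Your proof is correct, but it takes a different route from the paper in both parts.

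For (i), the paper decomposes $V_{\bar k}$ into simple summands permuted by the Frobenius $\sigma$ and descends the sums over $\sigma$-orbits to $k$. You instead observe that a $G$-equivariant retraction $V\to W$ is a preimage of $\mathrm{id}_W$ under the linear map $\Hom_G(V,W)\to\operatorname{End}_G(W)$. Since $G$-invariants commute with base change, a retraction over $\bar k$ forces one over $k$.

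For (ii), the paper views $G$-equivariant isomorphisms $V\cong V'$ as a torsor under $\Aut_G(V)$. It notes that this group becomes a product of general linear groups over $\bar k$, hence is connected, and concludes by Lang's theorem; this is exactly the alternative you mention. You instead recover the multiplicities from $\dim_k\Hom_G(S_i,V)=m_i\dim_k\operatorname{End}_G(S_i)$, using Schur's lemma and the base-change invariance of these dimensions.

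Your version buys generality. It works over any field and any extension, needs no hypotheses on $G$, and in (ii) uses only semisimplicity over $k$. The paper's identification of $\Aut_G(V)_{\bar k}$ with a product of $\GL$'s tacitly uses semisimplicity of $V_{\bar k}$, which holds here because $k$ is perfect.

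Your projection argument also sidesteps the inductive bookkeeping of the orbit construction. There, splitting off a $\sigma$-stable summand over $k$ implicitly requires a $k$-rational complement, which is precisely what your argument supplies.

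The paper's route for (ii) is shorter once Lang's theorem is available, but depends on $k$ being finite. It also fits the torsor-theoretic viewpoint of the surrounding subsection, notably the proof of Theorem \ref{CentralizerThm}, which invokes this lemma.
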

\begin{proof}
    (i) Let $\sigma \in \Gal(\bar k / k)$ be the Frobenius generator. Then there exists a decomposition $V_{\bar k} =\oplus_i V_i$ into simple subrepresentations $V_i$ which are permuted by $\sigma$. This can be constructed inductively: Start with any simple factor $V_1$ of $V_{\bar k}$. If $\sigma(V_1)=V_1$, then $V_1$ is defined over $k$ and we may split off $V_1$ over $k$ and continue with the quotient $V/V_1$. Otherwise we let $V_2=\sigma(V_1)$ and continue according to whether $\sigma(V_2) \subset V_1 \oplus V_2$ or not, etc.

    Then the direct summands of $V_{\bar k}$ given by the sums of $\sigma$-orbits of the $V_i$ give a decomposition of $V$ over $k$ into simple representations.

    (ii) The sheaf of $G$-equivariant isomorphisms $V \cong V'$ is a torsor under the sheaf of $G$-equivariant automorphisms of say $V$. By the semisimplicity of $V$, the base change of this sheaf to $\bar k$ is representable by a product of general linear groups. Hence by flat descent, this group sheaf is representable by a connected affine group scheme over $k$. Using Lang's theorem this implies the claim.
\end{proof}

\subsection{Torsors under centralizers}
The goal of this subsection is to prove Theorem \ref{CentralizerThm} below.

We fix a symplectic space $(V,\psi)$ over a finite field $k$ and denote the associated symplectic group scheme by $\Sp_\psi$. 
\begin{definition}
    Consider an indecomposable symplectic representation $G \to \Sp_\psi$ from a reductive group scheme $G$.
    \begin{enumerate}
        \item Such a representation is of \emph{type 1} if it is irreducible as a $G$-module.
        \item Such a representation is of \emph{type 2} if it is of the form $V=U \oplus U^*$ for some irreducible $G$-representation $U$ with the symplectic form given by
        \begin{equation*}
            \omega(u_1+u_1^*,u_2+u_2^*)=\langle u_1^*,u_2\rangle - \langle u_2^*,u_1 \rangle.
        \end{equation*}
    \end{enumerate}
\end{definition}

We will need the following, which is essentially due to Knop.
\begin{theorem} \label{KnopThm}
    Consider two symplectic representations $\iota, \iota'\colon G \to \Sp_\psi$ from a reductive group scheme $G$ which are semisimple as $G$-representations.
\begin{enumerate}
    \item If $\iota$ is indecomposable as a symplectic representation, then it is either of type 1 or 2.
    \item In general, every such symplectic representation $\iota$ is a direct sum of summands of type 1 or 2. These summands are unique up to isomorphism and permutation.
    \item If $\iota$ and $\iota'$ are isomorphic as $G$-representations and at least one of them does not possess a type 1 summand, then they are $\Sp_\psi(k)$-conjugate.
\end{enumerate}
\end{theorem}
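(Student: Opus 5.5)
The plan is to decompose $V$ into isotypic components, classify how $\psi$ pairs them, and then reduce the conjugacy question to the non-abelian cohomology $H^1(k,\cdot)$ of centraliser groups, which Lang's theorem kills once those groups are known to be connected.

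\textbf{Isotypic decomposition.} Since $V$ is semisimple (and remains so over $\bar k$ by Lemma \ref{SSLemma}), write
\[V=\bigoplus_{U} V_U\]
over the isomorphism classes of irreducible $G$-representations $U$ over $k$. The form $\psi$ is $G$-invariant, so it identifies $V$ with $V^*$ $G$-equivariantly. For each $U$, the pairing $\psi$ restricted to $V_U\times V_{U'}$ is a $G$-map $V_U\to V_{U'}^*$. It is therefore zero unless $U'\cong U^*$.

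\textbf{Two cases.} Consequently $V$ is the orthogonal sum of the blocks $V_U$ with $U\cong U^*$ and the blocks $V_U\oplus V_{U^*}$ with $U\not\cong U^*$. In the second case both $V_U$ and $V_{U^*}$ are isotropic and perfectly paired, so that block is a sum of type 2 pieces $U\oplus U^*$. In the first case, put $D=\mathrm{End}_G(U)$, a finite field extension of $k$ (Wedderburn, or simply because division algebras over finite fields are commutative). Then
\[V_U=U\otimes_D W,\qquad W=\Hom_G(U,V_U),\]
and $\psi$ corresponds to an $\epsilon$-hermitian form on $W$ relative to the involution of $D$ induced by the self-duality of $U$.
\begin{enumerate}
\item Diagonalising this form, or splitting off hyperbolic planes, produces indecomposables of type 1 (rank one over $D$) or of type 2 (a hyperbolic plane, i.e.\ $U\oplus U^*$ with $U$ isotropic).
\item Witt cancellation and the classification of hermitian forms over finite fields give uniqueness of the summands up to isomorphism and permutation.
\end{enumerate}
This yields (i) and (ii).

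\textbf{Conjugacy, part (iii).} Suppose $\iota\cong\iota'$ as $G$-representations. Then the transport set
\[T=\{g\in\Sp_\psi : g\,\iota\, g^{-1}=\iota'\}\]
is a torsor under the centraliser $Z=Z_{\Sp_\psi}(\iota)$. First I need $T(\bar k)\neq\emptyset$. Over $\bar k$, both forms lie on the same $G$-module and have the same decomposition type, so blockwise the hermitian forms (now over $\bar k$, or split extensions of it) are isometric. The main obstacle is that over $\bar k$ the types may merge, and a self-dual type 1 summand may be either symplectic or orthogonal. Once $T(\bar k)\neq\emptyset$, Lang's theorem gives a $k$-point of $T$ provided $Z$ is connected. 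Blockwise, $Z$ is a product of:
\begin{enumerate}
\item $\GL_n(D)$ for type 2 blocks, which is connected;
\item unitary, symplectic or orthogonal groups for self-dual blocks.
\end{enumerate}
Only orthogonal groups are disconnected, and these occur only when type 1 summands with a symmetric hermitian form are present.

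\textbf{Using the hypothesis.} Here the assumption that one of $\iota,\iota'$ has no type 1 summand enters. I expect to need it in two places:
\begin{enumerate}
\item It ensures that the self-dual blocks are purely hyperbolic with equal ranks, so $\iota$ and $\iota'$ have literally the same type 2 decomposition. This uses the uniqueness in (ii) together with the observation that a module without type 1 summands is hyperbolic.
\item It ensures that the centraliser contains no orthogonal factors, so $H^1(k,Z)$ is trivial.
\end{enumerate}
I expect the careful bookkeeping of the involution type on $D$, and the verification that the absence of type 1 summands for one representation forces it for the other, to be the delicate step.
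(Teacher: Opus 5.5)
\paragraph{Verdict.} There is a genuine gap: the uniqueness claim in (ii) is false over $k$ for type 1 summands, and your argument for (iii) leans on it, although that argument can be repaired without it.

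\paragraph{Parts (i) and (ii).} Your decomposition into isotypic blocks, followed by the reduction via Schur's lemma to $\epsilon$-hermitian forms on $W=\Hom_G(U,V_U)$ over the finite field $D=\mathrm{End}_G(U)$, is the argument of Knop that the paper cites. It does prove (i) and the existence part of (ii).

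The step that fails is uniqueness in (ii). Witt cancellation ($q\perp q_1\cong q\perp q_2\Rightarrow q_1\cong q_2$) does not make an orthogonal splitting into rank-one pieces unique, and over a finite field that uniqueness is false for type 1 summands. Take $G=\mathrm{SL}_2$, $U=k^2$ with its invariant symplectic form $b$, and $\delta\in k^\times$ a non-square.
\begin{itemize}
\item Binary quadratic forms over $\Fb_q$ are classified by their discriminant, so $\langle 1,1\rangle\cong\langle\delta,\delta\rangle$. Hence $(U,b)\perp(U,b)\cong(U,\delta b)\perp(U,\delta b)$.
\item Yet $(U,b)\not\cong(U,\delta b)$: $\Aut_G(U)$ consists of scalars $c$, and $c^2=\delta$ has no solution.
\end{itemize}
Moreover, $U\oplus U^*$ is decomposable when $U\cong U^*$ carries an invariant symplectic form (symplectic or unitary type), since a hyperbolic plane is $\langle 1\rangle\perp\langle -1\rangle$. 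So indecomposable type 2 summands only arise from non-self-dual or orthogonal-type $U$. Uniqueness therefore holds only up to $G$-module isomorphism, or over $\bar k$ (Knop's setting). Over $k$ it does hold for the type 2 summands, because alternating forms and perfect pairings over $D$ are unique up to isometry.

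\paragraph{Part (iii).} Your plan uses this uniqueness to match the type 2 decompositions of $\iota$ and $\iota'$, but the Lang argument does not need it.
\begin{itemize}
\item $T(\bar k)\neq\emptyset$ holds with no hypothesis. Over $\bar k$ the form on each multiplicity space is symmetric, alternating or a perfect pairing, as dictated by the constituent, and each such form is unique up to isometry. So ``merging of types'' is no obstacle. (Semisimplicity over $\bar k$ holds because $k$ is perfect; Lemma \ref{SSLemma} proves the converse direction.)
\item $T$ is a torsor under the centraliser of either representation, so let $Z$ be the centraliser of the one without a type 1 summand.
\item The hypothesis must be read as ``no type 1 orthogonal direct summand''. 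Symmetric and (skew-)hermitian forms over finite fields diagonalise, so this is equivalent to $V$ having no symplectic- or unitary-type constituents.
\item Then $Z_{\bar k}$ is a product of $\GL$'s and $\Sp$'s, hence connected, and Lang's theorem gives $T(k)\neq\emptyset$.
\end{itemize}
This reading is a property of the underlying $G$-module, which disposes of your ``delicate step'' about $\iota'$. Merely requiring the self-dual blocks to be hyperbolic is not enough: for $U$ of symplectic type, a hyperbolic and an anisotropic binary multiplicity form give $G$-isomorphic but non-conjugate representations.

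\paragraph{Comparison with the paper.} The paper simply defers to Knop's argument. Adapted to $k$, that argument shows directly that, absent type 1 summands, the symplectic structure is determined by the $G$-module. Your repaired route instead proves that $Z$ is connected. This yields the conclusion $H^1(k,Z)=0$ of Theorem \ref{CentralizerThm} directly, without passing through conjugacy.
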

\begin{proof}
    This follows by the argument of \cite[Thm 2.1]{Knop}.
\end{proof}

\begin{theorem} \label{CentralizerThm}
   Let $k$ be any finite field of characteristic $p \geq \dim(V)$ and $\iota\colon G \to \Sp_{\psi}$ any injective homomorphism from a reductive group $G$. If this symplectic representation does not contain any symplectic subrepresentations of type 1, then the centralizer $Z=Z_{\Sp_{\psi}}(G)$ has trivial cohomology set $H^1(k,Z)$.
\end{theorem}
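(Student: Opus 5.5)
The plan is to identify $H^1(k,Z)$ with a set of conjugacy classes of symplectic representations and then use Theorem \ref{KnopThm}(iii) to show that this set has only one element. We cannot simply apply Lang's theorem to $Z$, because $Z$ is in general disconnected: orthogonal groups can appear as factors of the centralizer of a type 2 summand $U\oplus U^*$ with $U$ self-dual. So the argument has to go through conjugacy classes.

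\emph{Step 1 (semisimplicity).} Write $\dim V = 2N$. The Coxeter number of $\Sp_{2N}$ is $h_{\Sp_\psi}=2N=\dim V\le p$. Hence, after base change to $\bar k$, Theorem \ref{EugeneThm} shows that $\iota(G)(\bar k)$ is $\Sp_\psi$-cr. By Example \ref{NComp} we have $n_{\Sp_\psi}(V)=\dim V-1<p$, so Theorem \ref{SerreThm} shows that $V_{\bar k}$ is a semisimple $G_{\bar k}$-representation. Lemma \ref{SSLemma}(i) then gives semisimplicity of $V$ over $k$. The same argument applies verbatim to any other injective symplectic representation $\iota'\colon G\to \Sp_\psi$ over $k$.

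\emph{Step 2 (cohomological interpretation).} Consider the conjugation action of $\Sp_\psi$ on the scheme $\Hom(G,\Sp_\psi)$. Its orbit through $\iota$ is $\Sp_\psi/Z$, locally closed in $\Hom(G,\Sp_\psi)$. The exact sequence $1\to Z\to \Sp_\psi\to \Sp_\psi/Z\to 1$ gives
\[
\Sp_\psi(k)\to (\Sp_\psi/Z)(k)\to H^1(k,Z)\to H^1(k,\Sp_\psi).
\]
By Lang's theorem, $H^1(k,\Sp_\psi)=0$, since $\Sp_\psi$ is connected. Therefore $H^1(k,Z)$ is in bijection with the set of $\Sp_\psi(k)$-orbits on $(\Sp_\psi/Z)(k)$. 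These orbits are exactly the $k$-homomorphisms $\iota'\colon G\to\Sp_\psi$ that are $\Sp_\psi(\bar k)$-conjugate to $\iota$, taken up to $\Sp_\psi(k)$-conjugacy. It therefore suffices to show that every such $\iota'$ is $\Sp_\psi(k)$-conjugate to $\iota$.

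\emph{Step 3 (conjugacy).} Let $\iota'$ be as in Step 2. By Step 1, both $\iota$ and $\iota'$ are semisimple over $k$. They become isomorphic as $G$-representations over $\bar k$, so by Lemma \ref{SSLemma}(ii) they are isomorphic over $k$. By hypothesis $\iota$ has no type 1 summand. Theorem \ref{KnopThm}(iii) then gives that $\iota$ and $\iota'$ are $\Sp_\psi(k)$-conjugate, and hence $H^1(k,Z)$ is trivial.

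The step needing most care is Step 2. One has to make sure that the orbit map really identifies $\Sp_\psi/Z$ with a subscheme of $\Hom(G,\Sp_\psi)$, whose $k$-points are the $k$-forms of $\iota$ in the above sense. One also has to keep track of the fact that Theorem \ref{KnopThm}(iii) only requires one of $\iota,\iota'$ to lack type 1 summands, which our hypothesis on $\iota$ provides. If representability of the orbit is awkward, I would instead argue directly with cocycles: a cocycle $z\in Z^1(k,Z)$ trivialises in $\Sp_\psi$ as $z_\sigma=g^{-1}\sigma(g)$ by Lang's theorem. Then $\iota'=g\iota g^{-1}$ is defined over $k$, and $\Sp_\psi(k)$-conjugacy of $\iota'$ to $\iota$ translates back into $z$ being a coboundary in $Z$.
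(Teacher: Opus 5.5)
Your proof is correct and follows the paper's argument essentially step for step. The paper likewise uses triviality of $H^1(k,\Sp_\psi)$ to realise a $Z$-torsor as the torsor of elements conjugating $\iota$ to a $k$-rational $\iota'$. It gets semisimplicity of both from Theorems~\ref{EugeneThm} and~\ref{SerreThm} with Lemma~\ref{SSLemma}(i), their isomorphism over $k$ from Lemma~\ref{SSLemma}(ii), and concludes by Theorem~\ref{KnopThm}(iii).

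One correction to your motivating remark. Orthogonal factors in $Z$ can only come from irreducible constituents $U$ carrying an invariant alternating form. Such a $U$ produces type 1 subrepresentations, for example the graphs of nonzero multiples of that form inside $U\oplus U^*$, which the hypothesis excludes. So $Z$ is geometrically a product of $\GL$'s and $\Sp$'s, as in Lemma~\ref{lemma:Z-smooth}, hence smooth and connected. After your Step 1, Lang's theorem would therefore already give $H^1(k,Z)=0$ directly.
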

\begin{proof}
 Let $\CT$ be a $Z$-torsor over $k$. Since $\Sp_\psi$ is special, the pushout of $\CT$ to $\Sp_{\psi}$ is trivial, i.e. we may embed $\CT$ equivariantly into $\Sp_{\psi}.$ By conjugating $\iota$ with any section of $\CT(\bar k)$ we obtain an injective homomorphism $\iota'\colon G \to \Sp_{\psi}$ which is independent of the choice of section and hence is defined over $k$. Then $\CT \subset \Sp_{\psi}$ is the $Z$-torsor of sections conjugating $\iota$ to $\iota'$, and so we want to show that $\iota$ and $\iota'$ are $\Sp_\psi(k)$-conjugate.

    By Example \ref{NComp}, the integer $n_{\Sp_\psi}(V)$ is equal to $\dim(V)-1$. As explained in Lecture 2 of \cite{SerreMoursund}, the fact that the representation $V$ of $\Sp_{\psi}$ is faithful implies $n_{\Sp_\psi}(V)\geq h_{\Sp_{\psi}}-1$. Hence by Theorem \ref{EugeneThm}, the group $G(\bar k)$, considered as a subgroup of $\Sp_{\psi}(\bar k)$ via either $\iota$ or $\iota'$ is $\Sp_{\psi}$-cr. We consider $V$ as symplectic representation of $G$ via $\iota$ or $\iota'$. Then Theorem \ref{SerreThm} implies that these are geometrically semisimple as $G$-representations, and Lemma \ref{SSLemma} implies that they are semisimple as $G$-representations. By the construction of $\iota'$, these $G$-representations are geometrically isomorphic, and so they are isomorphic by Lemma \ref{SSLemma} (ii). So by Theorem \ref{KnopThm} they are $\Sp_\psi(k)$-conjugate.
\end{proof}

We will also need smoothness of symplectic centralizer group schemes $Z$ of type $2$ representations, over a more general base.

\begin{lemma}\label{lemma:Z-smooth}
Let $G_R / R$ be a connected reductive group over a complete dvr $R$ with a residue field $k$ of characteristic $p$. Let $(V,\psi)$ be a symplectic $G$-representation on a free $R$-module satisfying the inequality $\dim_R(V) \geq p$. If the special fibre $V_k$ has no symplectic subrepresentation of type $1$, then the centralizer $Z=Z_{\Sp_{\psi}}(G)$ is a smooth $R$-scheme.
\end{lemma}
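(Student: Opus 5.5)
We establish smoothness of $Z=Z_{\Sp_\psi}(G)$ over $R$ in two steps: first show that the special and generic fibres are smooth and have the same dimension, then use the infinitesimal lifting criterion (equivalently, flatness together with smooth fibres) to conclude. We take the hypothesis to mean that $p$ is large compared with $\dim_R V$, i.e.\ $p\geq \dim_R V$ as in Theorem \ref{CentralizerThm}. This is what we need in order to apply Theorems \ref{EugeneThm} and \ref{SerreThm}.

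\textbf{Fibrewise structure.} Using Example \ref{NComp} and Theorems \ref{EugeneThm}, \ref{SerreThm} exactly as in the proof of Theorem \ref{CentralizerThm}, we see that $V_{\bar k}$ is a semisimple $G_{\bar k}$-representation. The generic fibre $V_{\bar F}$ is semisimple because the characteristic is $0$. By Theorem \ref{KnopThm}, each of these symplectic representations decomposes as a sum of type 1 and type 2 summands. In the special fibre there are no type 1 summands, so $V_{\bar k}\cong\bigoplus_i (U_i\oplus U_i^*)^{\oplus m_i}$.

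We then compute the centralizer directly. The $G$-equivariant endomorphisms of $\bigoplus_i U_i^{m_i}$ form $\prod_i M_{m_i}(\bar k)$. For a type 2 decomposition, the symplectic centralizer is $\prod_i \GL_{m_i}$ when $U_i\not\cong U_i^*$. When $U_i\cong U_i^*$, pairing multiplicity spaces gives instead an $\Sp$ or $\mathrm{O}$ factor, according to whether $U_i$ is orthogonal or symplectic. In all cases $Z_{\bar k}$ is a reductive group and therefore smooth. Its dimension is $\dim \mathfrak z_{\bar k}$, where $\mathfrak z=(\mathfrak{sp}(V))^{G}=(\Sym^2 V)^G$.

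\textbf{Comparing dimensions.} Since $\Sym^2V$ and $\mathrm{End}(V)$ are semisimple for $p$ large (again by Theorem \ref{SerreThm}, as $n$ of these representations is at most $2(\dim V-1)$), the invariant functor is exact. The $R$-module $(\Sym^2 V)^G$ is then a direct summand of $\Sym^2V$, and its formation commutes with base change. Consequently $\dim \mathfrak z_k=\dim\mathfrak z_F$. By linear reductivity of $G$ on these modules in the relevant range, the Lie algebra of $Z$ is this free module $\mathfrak z$.

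\textbf{Smoothness.} The remaining task is to lift points along square-zero thickenings $A'\to A$ of $R$-algebras. Take $g\in Z(A)$ and lift it to some $g'\in \Sp_\psi(A')$, which is possible because $\Sp_\psi$ is smooth. The failure of $g'$ to commute with $G$ defines a $1$-cocycle of $G$ with values in $\mathfrak{sp}(V)\otimes I$. Since $G$ is reductive and $H^1(G,\mathfrak{sp}(V)\otimes I)=0$ for $p$ large (a consequence of the semisimplicity and vanishing of $\Ext^1$ between the relevant weights, or alternatively of the direct-summand property just established), this cocycle is a coboundary. Adjusting $g'$ by the coboundary yields the desired lift.

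\textbf{Main obstacle.} The hard step is this $H^1$-vanishing over a general base. Over a non-linearly-reductive $G$ in characteristic $p$, it is not automatic. I would attempt it first via the fibrewise route: $Z$ is a closed subgroup scheme of $\Sp_\psi$ with smooth fibres of equal dimension. Flatness of $Z$ would then follow if $Z$ equals the schematic closure of $Z_F$. That in turn would follow because the reductive group $Z_{\bar k}$, described explicitly above, lifts: the decomposition into isotypic pieces lifts over the henselian ring $R$ by idempotent lifting in the semisimple algebra $\mathrm{End}_G(V)$.
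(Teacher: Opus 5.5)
There is a genuine gap. Your description of $Z_{\bar k}$ in the special fibre is fine, but every step that carries information from the special fibre to the $R$-integral structure of $V$ is asserted rather than proved, and the reasons you give do not work.

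(1) Semisimplicity of $W_k$, for $W=\Sym^2V$ or $\mathrm{End}(V)$, does not make $W^G\otimes_R k\to (W_k)^{G_k}$ surjective. The cokernel is $H^1(G,W)[\pi]$. The obstruction to lifting a $G_k$-invariant of $W_k$ through $R/\pi^{n+1}\to R/\pi^n$ lies in $H^1(G_k,W_k)$, and semisimplicity does not kill this in characteristic $p$ (e.g.\ $H^1(\mathrm{SL}_2,L(2p-2))\neq 0$). So the equality $\dim\mathfrak z_k=\dim\mathfrak z_F$ is unproven, and it is where the substance of the lemma lies: it says the decomposition type of $V$ does not jump under specialisation. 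Moreover, applying Theorem \ref{SerreThm} to $\Sym^2V$ needs $p>2\dim V-2$, not $p\ge\dim V$.

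(2) The vanishing of $H^1(G,\mathfrak{sp}(V)\otimes I)$ is the actual content of your lifting argument, and it is left open. Neither justification supplies it: a degree-zero direct-summand statement says nothing about $H^1$.

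(3) The fallback is circular. $\mathrm{End}_G(V)$ is an $R$-order, not a semisimple algebra, and idempotent lifting only lifts idempotents of $\mathrm{End}_G(V)\otimes_R k$. The isotypic projectors of $V_k$ live in $\mathrm{End}_{G_k}(V_k)$, which agrees with $\mathrm{End}_G(V)\otimes_R k$ only if you already know that invariants commute with base change. Also, smooth fibres of equal dimension do not imply flatness, since $Z_k$ could have components outside the closure of $Z_F$.

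What is missing is an $\Ext^1$-vanishing input on the special fibre. The paper uses it in the form $\Ext^1_{G_k}(\bar U_i,V_k/\bar U_i)=0$, which is exactly the obstruction to lifting each simple summand $\bar U_i\subset V_k$ to a $G$-stable free submodule through the thickenings $R/\pi^{n+1}\to R/\pi^n$. By completeness this gives an integral decomposition $V=\bigoplus_i(U_i^{m_i}\oplus(U_i^*)^{m_i})$, hence $Z\cong\prod\Sp_{2m_i}\times\prod\GL_{m_i}$, which is visibly smooth over $R$.

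With the same kind of input your route also closes, and more directly. For $p\neq 2$, $\mathfrak{sp}(V_{\bar k})$ is a $G$-stable direct summand of $\mathrm{End}(V_{\bar k})$, so $H^1(G_{\bar k},\mathfrak{sp}(V_{\bar k}))$ is a summand of $\Ext^1_{G_{\bar k}}(V_{\bar k},V_{\bar k})$. Suppose the latter vanishes; for $\dim V\le p$ such vanishing is available from Jantzen's results on low-dimensional representations. Then check formal smoothness on small extensions of Artinian local rings, where $I$ is a residue field. Your cocycle argument goes through, and the separate dimension comparison becomes unnecessary. As written, however, the proposal does not establish this vanishing, so it is incomplete at its key step.
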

\begin{proof}
To establish smoothness, using faithfully flat descent we may assume without loss of generality that $k$ is algebraically closed. Then, as in the proof of Theorem \ref{CentralizerThm}, the $G_k$-module $V_k$ is semisimple.

Let $(V_k,\psi) \simeq \bigoplus_{i=1}^m (\overline{U}_i^{m_i} \oplus (\overline{U}_i^{*})^{m_i})$ be a type $1$ decomposition into indecomposable symplectic subrepresentations of the module $V_k = V \otimes_R k$ with $U_i \neq U_j$ for $i\neq j$. Since the obstruction to lifting each summand $\overline{U}_i$ modulo a square-zero ideal lies in $\Ext^1(\overline{U}_i,V_k/\overline{U}_i)=0$, we see that the $k$-linear subrepresentations $\overline{U}_i$ can be lifted to $R$-linear and free subrepresentations $U_i \subset V$, which generate $V$ by Nakayama's lemma. 

Denoting the fraction field of $R$ by $K$, it is clear that each $U_i \otimes_R K$ is $K$-linearly indecomposable. Indeed, a $G_K$-invariant subspace of $U_i \otimes_R K$ extends by the valuation criterion applied to the Grassmannian of $U_i$, to a free $R$-linear subspace of $U_i$, which is still $G_R$-invariant.

Therefore, we have 
\begin{equation}\label{eqn:dec}
V = \bigoplus_{i=1}^m (U_i^{m_i} \oplus (U_i^{*})^{m_i}).
\end{equation}

The centralizer group scheme $Z_{\Sp_{\psi}}(G )$ agrees with the group scheme of symplectic automorphisms $\Aut_G(V,\psi)$ of $(V,\psi)$. Since the residue field $k$ is assumed to be algebraically closed, we may use the $R$-linear decomposition of Equation \eqref{eqn:dec} to identify it with the product
$$Z_{\Sp_{\psi}}(G) = \prod_{i=1}^m Z_i,$$
where $Z_i = \Sp_{2m_i}$ if $U_i \simeq U_i^*$ and $Z_i = \GL_{m_i}$ otherwise.
\end{proof}

\subsection{On the existence of symplectic linearisations}

We denote by $\Xc$ an algebraic $R$-stack, where we assume $R$ to be a Henselian dvr with finite residue field $k$ of characteristic $p$ of Krull dimension $\leq 1$ (e.g., $
R=k$ or $R$ equals the ring of integers $\Oc_F$ of a local field $F$). 
Furthermore, we assume that the schematic locus of $\Xc$ is dense and its complement has codimension $\geq 2$. 

\begin{definition}[Weakly symplectic stacks]
A weak symplectic form $\omega$ on $\Xc$ is given by an equivalence $\BL_{\Xc/R}^{\bullet} \simeq (\BL_{\Xc/R}^{\bullet})^{\vee}$, where $\BL^{\bullet}_{\Xc/R}$ denotes the cotangent complex, such that the induced pairing on $\mathcal{H}^0(\BL^{\bullet}_{\Xc/R})$ is alternating. 
\end{definition}

It is clear that a $0$-shifted symplectic stack over $\Cb$ in the sense of \cite{pantev2013shifted} gives rise to a weak symplectic structure as above. The main difference is that we will not concern ourselves with specifying what it means for $\omega$ to be a closed $2$-form.

\begin{definition}\label{defi:well-pointed}
Let $\bar{x} \in \Xc(k)$ be a point with reductive stabiliser group $G_k$. We say that $(\Xc,\bar{x})$ is \emph{well-pointed} if $\bar{x}$ can be lifted to an $R$-point $x$, such that $\Aut_{\Xc}(x) = G_R$ is a reductive group $R$-scheme with $k$-fibre $G_k$.    
\end{definition}

This notion of well-pointedness acts as a useful simplifying assumption. We will see in Lemma \ref{lemma:well-pointed} that for moduli problems of interest, it is always satisfied.

We denote by $S/R$ an \'etale extension. In this section we will investigate descent for certain results on the local structure of $\Xc$, adapted to the symplectic nature of $\Xc$.

Let $\Yc_1$ and $\Yc_2$ denote $R$-stacks, endowed with $k$-points $\bar{y}_i \in \Yc_i(k)$. We refer to a pair $(\Yc_i,\bar{y}_i)$ as a pointed $R$-stack and to the fibre product
$$\Hom_{\bullet}\big((\Yc_2,\bar{y}_2),(\Yc_1,\bar{y}_1)\big)=\Hom(\Yc_2,\Yc_1)\times_{\Yc_1(k)}\{\bar{y}_1\}$$
as the groupoid of pointed morphisms, or alternatively, the groupoid of morphisms respecting the base point. Note that by virtue of definition of fibre products in the $(2,1)$-category of groupoids, this is equivalent to the groupoid of pairs $(f,i)$ consisting of a morphism $f\in \Yc_1(\Yc_2)$ and an isomorphism $i\colon f(\bar{y}_2) \simeq \bar{y}_1$ relating the base points.

\begin{definition}\label{defi:Sigma}
Assume that $(\Xc,\bar{x})$ is well-pointed and we fix a lift of $\bar{x}$ to $x \in \Xc(R)$ as in Definition \ref{defi:well-pointed}. We denote by $T^*V$ the symplectic affine space given by $\mathcal{H}^0((\BL^{\bullet}_{\Xc/R})^{\vee}_x)$ with its natural $G$-action. Let $\mu\colon T^*V \to \mathfrak{g}^{\vee}$ be the corresponding momentum map. Consider the following stack on the small \'etale site of $R$  
$$\Sigma_{\Xc,\bar{x}}(S)=\{\iota\colon(\widehat{(\Xc_S)}_{\bar{x}},\bar{x})\dashrightarrow ([\widehat{(\mu^{-1}(0))_S} / G_S],0)\subset [\widehat{(T^*V)_S} / G_S] | \iota^*\omega = \omega\text{ and }\iota\colon \Aut_{\Xc_S}(\bar{x}) \simeq G_{S_k}\text{ is defined over }k \}.$$ 
\end{definition}
We remark that completion in $0$ of the affine space $T^*V$ or $\mu^{-1}(0)$ is omitted from the notation in the definition above.

We first record an observation pertaining to the local structure of $\Xc$ near $\bar{x}$.
\begin{lemma}\label{lemma:lci}
If there exists an \'etale extension $S/R$ such that $\Sigma_{\Xc,\bar{x}}(S) \neq \emptyset$, then $\Xc$ is a locally complete intersection in a Zariski-open neighbourhood of $\bar{x}$.
\end{lemma}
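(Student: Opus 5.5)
The plan is to show that the existence of $\iota$ identifies the formal completion of $\Xc$ at $\bar{x}$ with the formal completion at $0$ of the quotient stack $[\mu^{-1}(0)/G]$, up to an étale base change. The latter is visibly lci. We then spread the lci property from the formal neighbourhood to a Zariski neighbourhood of $\bar{x}$, and finally descend along $S/R$.

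\textbf{Step 1: reduce to $R=S$.} Being lci is étale-local on the source. The image of $\Xc_S\to\Xc$ is open and contains $\bar{x}$, so it suffices to prove the statement for $\Xc_S$, and we may take $R=S$.

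\textbf{Step 2: the local model is lci.} The group $G$ is a reductive, hence smooth, group scheme over $R$. The momentum map $\mu\colon T^*V\to\mathfrak{g}^\vee$ is a morphism between smooth affine $R$-spaces, so $\mu^{-1}(0)$ is cut out by $\dim\mathfrak{g}$ equations in a smooth scheme. Such a scheme is lci, since any quotient of a regular ring by an ideal is lci. Consequently the quotient stack $[\mu^{-1}(0)/G]$ is lci, as lci is smooth-local and $\mu^{-1}(0)\to[\mu^{-1}(0)/G]$ is a smooth atlas. In terms of cotangent complexes, the stack has the explicit perfect amplitude $[-1,1]$ description $\mathfrak{g}\to T^*V\to\mathfrak{g}^\vee$. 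The same holds after completion at $0$, because completion is flat over a Noetherian base.

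\textbf{Step 3: transfer to $\Xc$ and spread out.} The map $\iota$ induces an isomorphism on completions: it respects the stabilisers, which are identified with $G$, and the completed atlases. Pulling back along a smooth chart $U\to\Xc$ through a lift $u$ of $\bar{x}$, we get that $\widehat{\mathcal{O}}_{U,u}$ is a complete intersection. For Noetherian local rings, being a complete intersection can be tested on the completion (Avramov's characterisation via André–Quillen homology, or directly via the defining equations). Hence $\mathcal{O}_{U,u}$ is a complete intersection. The lci locus of a finite-type scheme over the excellent ring $R$ is open, so it contains a Zariski neighbourhood of $u$. Its image in $\Xc$ is open and contains $\bar{x}$.

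\textbf{Main obstacle.} The delicate point is the first part of Step 3. The source of $\iota$ is a formal completion of a stack at a point with nontrivial stabiliser, and the map is only a dashed (formal) morphism. One must make sure it really induces an isomorphism on complete local rings of a smooth atlas, not merely a symplectomorphism of some formal thickening. I would handle this by using the condition that $\iota$ identifies stabilisers and the tangent data $T^*V=\mathcal{H}^0((\BL^{\bullet}_{\Xc/R})^{\vee}_x)$. Taking the fibre product with the atlas $\Spec R\to BG$ on both sides then yields an isomorphism of completed smooth charts, of the form $\widehat{\mu^{-1}(0)}\times$ a smooth factor. From there the ring-theoretic openness argument above applies.
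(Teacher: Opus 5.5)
Your overall architecture (étale-locality, transfer of the complete-intersection property through the formal isomorphism $\iota$, openness of the lci locus) is the same as the paper's. However, Step~2 has a genuine gap, and it sits at the one substantive point of the lemma. The claim ``any quotient of a regular ring by an ideal is lci'' is false: $k[x,y]/(x^2,xy)$ is not even Cohen--Macaulay. What is true is that a closed subscheme of a regular scheme cut out by $r$ equations is lci \emph{provided it has codimension $r$}. In that case, since regular local rings are Cohen--Macaulay, the equations form a regular sequence.

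The paper's proof rests exactly on this: $\mu^{-1}(0)$ is the zero locus of $\dim\mathfrak{g}$ functions \emph{and} has codimension $\dim\mathfrak{g}$. Your proposal never establishes the codimension, and it is not automatic. For $\GL_n$ acting on $T^*\mathfrak{gl}_n$ over a field (the Jordan quiver), $\mu^{-1}(0)$ is the commuting scheme $\{[X,Y]=0\}$. It has codimension $n^2-n$, while its ideal at the origin needs $n^2-1$ generators, so it is not a complete intersection for $n\ge 2$. For the same reason, your three-term complex $\mathfrak{g}\to T^*V\to\mathfrak{g}^\vee$ is the cotangent complex of the \emph{derived} zero locus. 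It computes the cotangent complex of $[\mu^{-1}(0)/G]$ only once you already know $\mu^{-1}(0)$ is lci of the expected dimension, so it cannot serve as the justification.

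To repair Step~2 you must supply the dimension count. In this setting it comes from the standing hypothesis that the schematic locus of $\Xc$ is dense. Transported through $\iota$, this says the points of $\mu^{-1}(0)$ near $0$ with trivial stabiliser are dense. At such a point $v$ the infinitesimal action $\mathfrak{g}\to T^*V$, $\xi\mapsto \xi\cdot v$, is injective. Its transpose via the symplectic form is $d\mu_v$, which is therefore surjective, so $\mu^{-1}(0)$ is smooth there of codimension $\dim\mathfrak{g}$. Density then forces every irreducible component of $\mu^{-1}(0)$ through $0$ to have codimension exactly $\dim\mathfrak{g}$, and the regular-sequence argument applies. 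With that in place, the rest of your argument goes through: testing the c.i.\ property on completions, comparing completed charts, openness of the lci locus over an excellent base, and étale descent.
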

\begin{proof}
The property of being a locally complete intersection is Zariski-open and is invariant under completions. 
The assertion then follows directly from the fact that $\mu^{-1}(0)_S$ is a locally complete intersection, since it is a subset of codimension $\dim \mathfrak{g}$ defined as the joint vanishing locus of $\dim \mathfrak{g}$ regular functions.
\end{proof}

The following lemma states that the formal completions used in the definition above, can be replaced by an algebraic counterpart. For this purpose, for a pointed scheme $(Y,\bar{x})$ we denote by $\widehat{Y}_{\bar{x}}^{\mathrm{alg}}$ the affine scheme $\Spec \widehat{\Oc}_{Y,\bar{x}}$ of the completion in $\bar{x}$.
\begin{definition}
$\Sigma^{\mathrm{alg}}_{\Xc,\bar{x}} = \{\upsilon\colon ([\widehat{(\mu^{-1}(0))^{\mathrm{alg}}_S} / G],0)\dashrightarrow ((\Xc_S),\bar{x}) | \upsilon^*\omega = \omega \text{ and }\upsilon^{-1} \colon \Aut_{\Xc_S}(\bar{x}) \simeq G_{S,}\text{ is defined over }k \}$ 
\end{definition}
\begin{lemma}\label{lemma:Sigma-alg}
$\Sigma_{\Xc,\bar{x}}=\Sigma^{\mathrm{alg}}_{\Xc,\bar{x}}$.
\end{lemma}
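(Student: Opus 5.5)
The plan is to construct mutually inverse maps between the two stacks (on the small étale site of $R$), by passing between formal completions and completions as affine schemes. Two features make this work. First, $\mu^{-1}(0)$ is affine over $R$ and finitely presented. Second, every object in both definitions is a pointed morphism of $G$-quotient stacks, so it is determined by $G$-equivariant data on an atlas.

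In the direction $\Sigma_{\Xc,\bar{x}} \to \Sigma^{\mathrm{alg}}_{\Xc,\bar{x}}$, start from an isomorphism $\iota\colon \widehat{(\Xc_S)}_{\bar{x}} \simeq [\widehat{\mu^{-1}(0)_S}/G_S]$ compatible with the base points. The target is the formal completion of an affine $G_S$-scheme $Y=\mu^{-1}(0)_S$ along the $G$-fixed point $0$. Its ring of functions is the $\mathfrak m$-adically complete ring $\widehat{\Oc}_{Y,0}$, and it carries a $G$-action because $G$ fixes $0$ and so preserves $\mathfrak m$.

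By Grothendieck's existence theorem for formal functions, in the form valid for formal affine schemes, $\mathrm{Spf}\,\widehat{\Oc}_{Y,0}$ algebraises canonically to $\widehat{Y}^{\mathrm{alg}}_0 = \Spec \widehat{\Oc}_{Y,0}$, and this algebraisation is $G$-equivariant. One then wants the composite $[\widehat{Y}^{\mathrm{alg}}_0/G] \to [\widehat{Y}_0/G] \xrightarrow{\iota^{-1}} \widehat{(\Xc_S)}_{\bar{x}} \to \Xc_S$ to extend from the formal stack to the algebraic stack $[\widehat{Y}^{\mathrm{alg}}_0/G]$. This is the key input: Tannakian/coherent completeness, as in Alper--Hall--Rydh. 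Because $G$ is linearly reductive on the relevant fibres (or at least $[\widehat{Y}^{\mathrm{alg}}_0/G]$ is coherently complete along the closed substack $BG_k$), there is an equivalence
\[\Hom([\widehat{Y}^{\mathrm{alg}}_0/G],\Xc_S) \simeq \lim_n \Hom([Y_n/G],\Xc_S),\]
where $Y_n$ denote the infinitesimal neighbourhoods. Since $\Xc$ has affine diagonal, being an algebraic stack with quasi-affine schematic locus, this equivalence produces $\upsilon$. Conversely, restricting $\upsilon$ to the infinitesimal thickenings and passing to the colimit gives back a formal morphism $\iota^{-1}$.

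It remains to check that the extra conditions match. The condition $\upsilon^*\omega=\omega$ can be tested on the thickenings, since cotangent complexes commute with completion and $\omega$ is determined by its restrictions to all $[Y_n/G]$. The condition that the map on automorphism groups of the base point is defined over $k$ concerns only the closed point $BG_k$, which is the same for both constructions. Both constructions are natural in $S$, so together they define an equivalence of stacks on the étale site.

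The main obstacle is the coherent completeness of $[\Spec \widehat{\Oc}_{Y,0}/G]$ in mixed characteristic. Here $G$ is merely reductive, not linearly reductive, so the Alper--Hall--Rydh theorem does not apply verbatim. To get around this, I would use the adequate-moduli version of their results: $\Spec (\widehat{\Oc}_{Y,0})^G$ is complete local, and $[\widehat{Y}^{\mathrm{alg}}_0/G]$ admits an adequate moduli space. I would then invoke coherent completeness along the fibre over the closed point, which holds for stacks with an adequate moduli space whose base is complete local. This is the step I would verify most carefully.
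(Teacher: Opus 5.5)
Your proposal is essentially the paper's argument. The paper's proof consists only of the chain $\Hom_{\bullet}\big((\widehat{\Yc}_S)_{\bar y},(\widehat{\Xc}_S)_{\bar x}\big)\simeq\Hom_{\bullet}\big((\widehat{\Yc}_S)_{\bar y},\Xc_S\big)\simeq\Hom_{\bullet}\big(\widehat{\Yc}^{\mathrm{alg}}_S,\Xc_S\big)$ for $\Yc_S=[\mu^{-1}(0)_S/G_S]$, which is exactly your ``compose with $(\widehat{\Xc}_S)_{\bar x}\to\Xc_S$, then extend from the formal to the algebraic completion''; the coherent-completeness/Tannakian input you make explicit, and rightly flag as delicate for reductive but not linearly reductive $G$ in mixed characteristic, is simply asserted there for stacks locally of finite presentation, and your remaining slip (affine diagonal of $\Xc$ does not follow from density of its schematic locus) is harmless, since the Tannakian extension statement can instead use a hypothesis on the source, namely being locally the spectrum of a G-ring, which holds because complete local rings are excellent.
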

\begin{proof}
This follows from the following string of equivalences
$$\Hom_{\bullet}(((\widehat{\Yc}_S)_{\bar{y}},\bar{y}),((\widehat{\Xc}_S)_{\bar{x}},\bar{x})) \simeq \Hom_{\bullet}\big((\widehat{\Yc}_S)_{\bar{y}},\bar{y}),(\Xc_S,\bar{x})\big) \simeq\Hom_{\bullet}\big((\widehat{\Yc}_S^{\mathrm{alg}},\bar{y}),(\Xc_S,\bar{x})\big)$$
holding for algebraic stacks $\Xc_S$ and $\Yc_S = [\mu^{-1}(0)_S/G_S]$ locally of finite presentation, where we denote by $(\widehat{\Yc}^{\mathrm{alg}}_S)_{\bar{y}}$ the formal quotient stack $[\widehat{\mu^{-1}(0)}^{\mathrm{alg}}_S/G_S]$.
\end{proof}

\begin{lemma}
The groupoid $\Sigma_{\Xc,{\bar{x}}}(S)$ is equivalent to a set.
\end{lemma}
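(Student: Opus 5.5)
To show that $\Sigma_{\Xc,\bar{x}}(S)$ is equivalent to a set, it suffices to check that every object $\iota$ has trivial automorphism group in this groupoid. An object is a pointed morphism $(\iota,i)$ of formal stacks: a 1-morphism
\[
\iota\colon \widehat{(\Xc_S)}_{\bar{x}} \to [\widehat{(\mu^{-1}(0))_S}/G_S]
\]
together with an identification $i$ of $\iota(\bar{x})$ with the base point $0$. An automorphism of $(\iota,i)$ is a 2-morphism $\eta\colon \iota \Rightarrow \iota$ whose value at $\bar{x}$ is compatible with $i$, so $\eta_{\bar{x}}$ is the identity of the base point. The plan is to show that such an $\eta$ is forced to be the identity.

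First, I will identify 2-automorphisms of $\iota$ with sections of an inertia-type group over the source. Since the target is a quotient $[Y/G_S]$ with $Y=\widehat{\mu^{-1}(0)}_S$, a 2-automorphism of $\iota$ is the same as an automorphism of the $G_S$-torsor $P\to \widehat{(\Xc_S)}_{\bar{x}}$ classified by $\iota$ which commutes with the equivariant map $P\to Y$. Equivalently, it is a section of the group scheme $\iota^*I$, where $I\to[Y/G_S]$ is the inertia stack. The pointedness condition says that this section restricts to the identity at the closed point $\bar{x}$.

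Second, I will use that the map $\Aut_{\Xc_S}(\bar{x})\to G_{S_k}$ induced by $\iota$ is an isomorphism, as required in Definition \ref{defi:Sigma}. A 2-automorphism $\eta$ restricts at $\bar{x}$ to conjugation of this isomorphism by an element $g\in G(k)$, subject to $g\,\iota_*(a)\,g^{-1}=\iota_*(a)$ for all $a$. Pointedness forces $g=1$, because the datum $i$ is fixed. It then remains to propagate triviality from the closed point to the whole formal neighbourhood. I will do this by induction over the infinitesimal neighbourhoods $\bar{x}^{(n)}$. The obstruction to uniqueness at each step is governed by sections of $\iota^*I$ over a square-zero thickening that are trivial modulo the previous one, and these lie in $\mathfrak{g}\otimes J$ invariant under the stabiliser. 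Because $\iota$ is stabiliser-preserving, in the sense that it identifies automorphism groups of points compatibly, such a deformation would have to come from a first-order automorphism of the source object. The source is a formal neighbourhood inside $\Xc_S$, and $\iota$ identifies it with an open-and-closed piece of the inertia. Compatibility then forces the deformation to be trivial.

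I expect this last step, showing that $\iota$ induces an isomorphism on inertia over the whole formal neighbourhood and not only at $\bar{x}$, to be the main obstacle. My first approach would be to argue that $\iota$ is formally étale, being symplectic (since $\iota^*\omega=\omega$ makes the tangent map an isomorphism on cotangent complexes), and hence representable near $\bar{x}$ because it is injective on stabilisers at the closed point. A representable morphism $\iota$ has the property that $\iota^*I_{\mathrm{target}}$ agrees with $I_{\mathrm{source}}$ up to the section coming from the identity, so a 2-automorphism of $\iota$ is a section of the relative inertia, which is trivial. Representability follows from the isomorphism on stabilisers at $\bar{x}$ together with upper semicontinuity of stabiliser dimension and the fact that every point of the formal neighbourhood specialises to $\bar{x}$. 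Assembling these steps shows that the automorphism group of every object is trivial, so $\Sigma_{\Xc,\bar{x}}(S)$ is discrete.
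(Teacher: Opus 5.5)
Your first two steps are fine. An automorphism of a pointed object is a 2-automorphism $\eta$ of $\iota$, i.e.\ a section of $\iota^*I$, and it is trivial at $\bar x$. The real content of the lemma, however, is propagating triviality from $\bar x$ to the whole formal neighbourhood, and neither of your two mechanisms does this.

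\textbf{The representability argument.} It rests on a false claim. Representability only says that $I_{\mathrm{source}}\to\iota^*I_{\mathrm{target}}$ is a monomorphism. A 2-automorphism of $\iota$ is a section of $\iota^*I_{\mathrm{target}}$ (commuting with the image of $I_{\mathrm{source}}$), not a section of the relative inertia. Moreover, $\iota$ is already an isomorphism in $\Sigma_{\Xc,\bar x}$, so representability carries no information. For instance, the identity of $\widehat{\Ab^1}\times B\Gb_m$ is an isomorphism, yet it has the pointed 2-automorphisms $1+t\,k[[t]]$.

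\textbf{The induction over infinitesimal neighbourhoods.} This cannot work, because the statement is false at every finite order, even when generic stabilisers are trivial. Take $\Gb_m$ acting on $T^*\Ab^2$ with weights $(1,1,-1,-1)$ and $\mu=a_1b_1+a_2b_2$, and let $g=1+(a_1b_2)^m$, an invariant unit.
\begin{enumerate}
\item On $\mu^{-1}(0)^{(2m)}$ we have $(g-1)a_i=(g-1)b_j=0$, since these products lie in $\mathfrak m^{2m+1}$.
\item Yet $g\neq 1$ there, because the irreducible quadric $\mu$ does not divide $(a_1b_2)^m$.
\end{enumerate}
So $g$ is a non-trivial pointed 2-automorphism of the identity of $[\mu^{-1}(0)^{(2m)}/\Gb_m]$, and it restricts trivially to level $2m-1$. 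Your inductive step (``compatibility forces the deformation to be trivial'') therefore fails. Triviality only appears in the limit, through the generic points of $\Spec\widehat{\Oc}$, which no finite thickening sees.

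\textbf{The missing ingredient.} You never use the standing hypothesis that the schematic locus of $\Xc$ (trivial stabilisers) is dense. Conversely, pointedness and symplecticity play no role. The paper's argument runs as follows.
\begin{enumerate}
\item Algebraise $\iota$ via Lemma \ref{lemma:Sigma-alg}. This gives a $G$-torsor $P$ over the spectrum of the complete local ring, together with a $G$-equivariant $f\colon P\to\mu^{-1}(0)$.
\item A 2-automorphism is an automorphism $\phi$ of $P$ with $f\circ\phi=f$. Over the open $P^s$ lying above the locus with trivial stabilisers, this forces $\phi=\mathrm{id}$.
\item Since $\mu^{-1}(0)$, and hence $P$, is a local complete intersection (Lemma \ref{lemma:lci}), $P$ is Cohen--Macaulay and has no embedded points. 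So the dense open $P^s$ is schematically dense, and $\phi=\mathrm{id}$ on all of $P$.
\end{enumerate}
This generic-freeness plus schematic-density argument is what replaces your propagation step.
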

\begin{proof}
 Since the base $S$ is irrelevant to these considerations, reference to $S$ will be omitted for the duration of the proof. It suffices to show that an isomorphism of stacks
$\widehat{\Xc}_{\bar{x}} \to [\widehat{\mu^{-1}(0)}_0/G]$
does not have any non-trivial automorphisms if it exists. We remark that some of the defining properties of $\Sigma_{\Xc,\bar{x}}$, such as pointedness and symplecticity will not play a role in the argument. Assuming that a morphisms as above exists, we may formally identify $\widehat{\Xc}_{\bar{x}}$ and $[\widehat{\mu^{-1}(0)}/G]$.

The morphism $[\widehat{\mu^{-1}(0)}/G] \to [\mu^{-1}(0)/G]$ can be algebraised as stated in Lemma \ref{lemma:Sigma-alg}. We therefore obtain a diagram
$$[\widehat{\mu^{-1}(0)}^{\mathrm{alg}}/G ]\leftarrow P \xrightarrow{f} \mu^{-1}(0),$$
where $P$ is a $G$-torsor and $f$ is a $G$-equivariant morphism. We assert that the automorphism group of the $G$-torsor $P$ acts on the collection of such diagrams with trivial stabiliser group.  This is the case since the stabiliser group on $\Xc$ is assumed to be trivial up to codimension $2$, i.e., there exists an open dense subset $\Xc^s \subset \Xc$, which is a space. The same property must then hold for $[\mu^{-1}(0)/G]$ by virtue of assumption. Furthermore, since $\mu^{-1}(0)$ is a locally complete intersection by Lemma \ref{lemma:lci}, so is the $G$-torsor $P$. In particular, the latter is Cohen-Macaulay. We therefore see that a morphism $f\colon P \to \mu^{-1}(0)$ can be recovered from its restriction $f|_{P^s}$. We therefore see that $\Aut P$ acts freely on the set of $G$-equivariant morphisms $\{f\colon P \to \mu^{-1}(0)\}^G$.
\end{proof}
Before turning to the next definition, we will explain the strategy used in the previous proof with the help of an example. In the case of interest to this paper, the symplectic quotient $[\mu^{-1}(0)/G]$ corresponds to the Nakajima quiver stack $\Nc_Q$, that is, the moduli stack of representations of the preprojective algebra $\Pi_Q^0$.

In this particular case, the argument above amounts to the observation that a morphism $\widehat{\Xc}^{\mathrm{alg}} \to \Nc_Q$ yields a family of $\alpha$-twisted $\Pi^0_Q$-representations. Using generic stability, one easily sees that such a twisted representation has only scalar automorphisms, which are erased by the $\G_m$-rigidification process.

\begin{definition}\label{defi:Sb}
Let us denote by $\mathbb{S}$ the group of symplectic automorphisms of $[\widehat{\mu^{-1}(0)}/G]$  preserving the base point $0$. That is, as a set-valued sheaf  $\mathbb{S}=\Sigma_{[\mu^{-1}(0)/G],0}$, which we endow with the canonical group structure given by composition. 
\end{definition}
\begin{lemma}
The natural action
$\mathbb{S}\circlearrowright \Sigma_{\Xc,\bar{x}}$
is a torsor, if $\Sigma_{\Xc,\bar{x}} \neq \emptyset$. 
\end{lemma}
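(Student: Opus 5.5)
The plan is to verify the three torsor axioms directly on the level of $S$-points, for $S/R$ étale: the action is well defined, free, and transitive on nonempty fibres. Since both $\Sb$ and $\Sigma_{\Xc,\bar{x}}$ are equivalent to sheaves of sets (the preceding lemma, applied to $\Xc$ and to $[\mu^{-1}(0)/G]$ itself), all identities can be checked on sets. Over a fixed $S$ with $\Sigma_{\Xc,\bar{x}}(S)\neq\emptyset$ this gives simple transitivity. Sheaf-theoretically this makes $\Sigma_{\Xc,\bar{x}}$ a pseudo-torsor, and the hypothesis $\Sigma_{\Xc,\bar{x}}\neq\emptyset$ supplies local sections, so it is a torsor.

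First, I define the action. For $s\in\Sb(S)$ and $\iota\in\Sigma_{\Xc,\bar{x}}(S)$, set $s\cdot\iota = s\circ\iota$. I will check that $s\circ\iota$ again satisfies the conditions in Definition \ref{defi:Sigma}:
\begin{enumerate}
\item it is pointed, since $s$ fixes $0$;
\item it pulls back $\omega$ to $\omega$, because $(s\circ\iota)^*\omega=\iota^*s^*\omega=\iota^*\omega=\omega$;
\item the induced isomorphism $\Aut_{\Xc_S}(\bar{x})\simeq G_{S_k}$ is defined over $k$, since it is the composite of two isomorphisms each defined over $k$.
\end{enumerate}
Associativity and unitality are immediate from associativity of composition, and compatibility with restriction along étale maps $S'\to S$ is clear. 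Here I use that each $\iota$ is an isomorphism of formal stacks, which is implicit in the definition.

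Second, freeness. If $s\circ\iota=\iota$, then composing with $\iota^{-1}$ gives $s=\mathrm{id}$. Because $\Sigma$ is set-valued, equality here means equality of points rather than merely a $2$-isomorphism, so this cancellation is legitimate.

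Third, transitivity. Given $\iota,\iota'\in\Sigma_{\Xc,\bar{x}}(S)$, put $s=\iota'\circ\iota^{-1}$, an automorphism of $[\widehat{\mu^{-1}(0)}_S/G_S]$. I will check that $s$ lies in $\Sb(S)$:
\begin{enumerate}
\item it preserves the base point $0$;
\item it satisfies $s^*\omega=(\iota^{-1})^*\iota'^*\omega=(\iota^{-1})^*\omega=\omega$;
\item its effect on the stabiliser $G_{S_k}$ of $0$ is the composite of the $k$-rational isomorphisms induced by $\iota'$ and $\iota^{-1}$, hence is defined over $k$.
\end{enumerate}
Then $s\cdot\iota=\iota'$ by construction.

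The main obstacle I expect is the stacky subtlety that the elements of $\Sigma$ are a priori morphisms in a $2$-category. The identities $s\circ\iota=\iota'$ and the triviality of stabilisers only hold up to $2$-isomorphism. This is handled exactly by the preceding lemma, that $\Sigma_{\Xc,\bar{x}}(S)$ and $\Sb(S)=\Sigma_{[\mu^{-1}(0)/G],0}(S)$ are equivalent to sets. That lemma requires $[\mu^{-1}(0)/G]$ to have generically trivial stabilisers in codimension $\geq 2$, and this is inherited from $\Xc$ through any $\iota$ once $\Sigma_{\Xc,\bar{x}}\neq\emptyset$. I would also need to confirm that $\Sb$ is a group: inverses of symplectic pointed automorphisms are again such, by the same computation as in the transitivity step.
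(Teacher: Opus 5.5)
Your proposal is correct and follows the paper's approach: the paper's proof is the single remark that the claim follows directly from Definitions \ref{defi:Sigma} and \ref{defi:Sb}, and you spell out that verification (the action by post-composition, freeness by cancelling the isomorphism $\iota$, transitivity via $\iota'\circ\iota^{-1}$). Using the preceding lemma to treat $\Sigma_{\Xc,\bar{x}}$ and $\Sb$ as set-valued, and reading $\Sigma_{\Xc,\bar{x}}\neq\emptyset$ as the existence of sections \'etale-locally, is consistent with how the paper uses these objects.
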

\begin{proof}
This follows directly from Definitions \ref{defi:Sigma} and \ref{defi:Sb}. 
\end{proof}
\begin{lemma}\label{lemma:def-theory}
The group sheaf $\mathbb{S}$ admits an inverse limit description:
$$\mathbb{S} \simeq \varprojlim_{n \geq 1} \mathbb{S}^{(n)},$$
where $\mathbb{S}^{(n)}$ is defined to be the group of pointed symplectic automorphisms of the $n$-th order infinitesimal neighbourhood $[\mu^{-1}(0)^{(n)}/G].$
The fibres of the transition maps in the projective system above are additive groups $\G_a^r$. 
\end{lemma}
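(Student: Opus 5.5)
The plan has two parts: first establish the inverse limit description, then compute the kernels of the truncation maps with deformation theory.

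\textbf{The inverse limit.} Write $\Yc=[\mu^{-1}(0)/G]$ and let $\Yc^{(n)}=[\mu^{-1}(0)^{(n)}/G]$ be its $n$-th order infinitesimal neighbourhoods at $0$. Since $\widehat{\mu^{-1}(0)}$ is the formal colimit of the $\mu^{-1}(0)^{(n)}$, compatibly with the $G$-action, a pointed morphism $[\widehat{\mu^{-1}(0)}/G]\to[\widehat{\mu^{-1}(0)}/G]$ is the same as a compatible system of pointed morphisms $\Yc^{(n)}\to\Yc^{(n)}$. These morphisms must preserve the maximal ideal and hence the $n$-th order neighbourhoods. The conditions $\iota^*\omega=\omega$ and ``induced map on stabilisers defined over $k$'' can be checked on each finite-order truncation, and they are compatible with restriction. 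Invertibility can also be checked levelwise.

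This gives $\Sb\simeq\varprojlim\Sb^{(n)}$ as group sheaves on the small étale site. The preceding lemma (that $\Sigma$ is a set) guarantees that there are no higher automorphisms to track, so the limit is taken in sheaves of groups rather than in groupoids.

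\textbf{The transition fibres.} Fix an automorphism $\phi_n\in\Sb^{(n)}$ and study the lifts of $\phi_n$ to $\Sb^{(n+1)}$. Work $G$-equivariantly on $P=\mu^{-1}(0)$ with fixed point $0$. Since $\mu^{-1}(0)^{(n)}\subset\mu^{-1}(0)^{(n+1)}$ is a square-zero thickening with ideal $I=\mathfrak m^{n+1}/\mathfrak m^{n+2}$, the set of extensions of a given map $\Yc^{(n+1)}\to\Yc^{(n)}$ to a map into $\Yc^{(n+1)}$ is, when nonempty, a torsor under the derivation group $\Hom_G(\Omega,\,I)$. Here $\Omega$ is the appropriate truncation of the (equivariant) cotangent complex at $0$.

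Imposing the symplectic condition cuts this group down to a linear subspace: the derivations whose induced variation of $\omega$ vanishes. The stabiliser condition imposes a further linear constraint. The result is a finite free $R$-module $W_n$ of $G$-invariant (symplectic) vector fields of weight $n+1$.

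Taking the kernel of $\Sb^{(n+1)}\to\Sb^{(n)}$ as the fibre over the identity, composition of two lifts corresponds to addition in $W_n$, because the correction terms are square-zero. Hence the fibres are torsors under $\G_a^r$ with $r=\rank W_n$.

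\textbf{The main obstacle.} The hardest point is showing that these fibres are honest affine spaces $\G_a^r$ over $R$: that $W_n$ is locally free and its formation commutes with base change along $S/R$ and to $k$.

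My approach is to identify $W_n$ with $(\mathrm{Sym}^{n+2}(T^*V)^\vee\otimes\ldots)^G$, using Hamiltonian generating functions of degree $n+2$ modulo the ideal of $\mu$. I would then invoke the semisimplicity results of Subsection \ref{SemisimplicitySS} (Theorems \ref{EugeneThm} and \ref{SerreThm}) for $p$ large relative to the relevant degrees. Semisimplicity lets invariants be taken exactly, so $G$-invariants commute with base change and are free.

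If only a fibrewise statement is needed, it suffices to work over the residue field $k$, where the fibre is a $k$-vector space and hence $\G_a^r$ automatically. I would state the lemma in that form if the integral version fails.
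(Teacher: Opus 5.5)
Your core argument is the paper's. Its proof consists solely of the deformation-theoretic identification of the kernel of the truncation map on pointed automorphisms with $G$-invariant derivations valued in the square-zero ideal, $\mathbb{H}^0(\mathcal{I}\otimes\mathbb{L}^{\vee})^G$. Your additions are correct refinements: the symplectic condition $L_D\omega=0$ is linear in $D$, and $(1+D_1)(1+D_2)=1+D_1+D_2$ for $n\geq 1$. The stabiliser condition is in fact vacuous on the kernel, since it only sees the restriction to $BG$.

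The step you call the main obstacle, however, is attacked with tools that cannot work, because the residue characteristic $p$ is fixed while $n$ is unbounded. So ``$p$ large relative to the relevant degrees'' is unattainable. Theorem \ref{SerreThm} needs $n_G<p$ for the degree-$(n+2)$ piece. But with $W=(T^*V)^{\vee}$ one has $n_G(\Sym^m W)\geq m\,n_G(W)$, which is unbounded as soon as $G$ is non-abelian. Semisimplicity genuinely fails in that range: in $\Sym^p$ of the standard representation of $\mathrm{SL}_2$, the span of $x^p,y^p$ is a non-split copy of the Frobenius twist. Likewise, trading weight-$(n+1)$ symplectic vector fields for degree-$(n+2)$ Hamiltonians divides by $n+2$ via the Euler field, and this breaks down when $p\mid n+2$; for instance $x^{p-1}dx$ is closed but not exact in characteristic $p$.

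Fortunately, none of this is needed. If one thickens the closed point $\bar x$, as the paper does (in the proof of Lemma \ref{lemma:Br-stack} the ideals $\mathcal{I}_n$ live on $BG=[\Spec k/G]$), the kernel is a finite-dimensional $k$-vector space. So your fallback is not a retreat but exactly the right statement. If one instead thickens the $R$-section $0$ of an $R$-flat $\mu^{-1}(0)$, the kernel is a finitely generated torsion-free, hence free, module over the dvr $R$. Either way $\mathbb{S}$ is a sheaf on the small \'etale site, where all base changes are flat, so $\Hom$-modules, kernels and $G$-invariants commute with them. That is all the $\mathbb{G}_a^r$ claim requires.

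A smaller correction: the lemma that $\Sigma$ is a set does not exclude higher automorphisms of the truncations. Its proof uses a dense locus with trivial stabilisers, which $[\mu^{-1}(0)^{(n)}/G]$ lacks, being topologically a single point with stabiliser $G$. These truncations can carry nontrivial 2-automorphisms, e.g.\ sections of the inertia of the form $1+\xi$ with $\xi$ a $G$-equivariant $\mathfrak{g}$-valued function vanishing to order $n$. The paper also silently treats the $\mathbb{S}^{(n)}$ as groups. Strictly, one passes to $\pi_0$, and the kernel is then the deformation group modulo the image of these level-$n$ 2-automorphisms.
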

\begin{proof}
As a consequence of deformation theory, the kernel of the map
$$\Aut(([\mu^{-1}(0)^{(n+1)}/G],0)) \rightarrow \Aut([\mu^{-1}(0)^{(n)}/G],0)$$
is given by $$\mathbb{H}^0([\mu^{-1}(0)^{(n)}/G],\mathcal{I}\otimes (\mathbb{L}^{\bullet}_{[\mu^{-1}(0)/G]})^{\vee}) \simeq \big(\mathcal{H}^0(\mu^{-1}(0)^{(n)},\mathcal{I}\otimes (\mathbb{L}^{\bullet}_{\mu^{-1}(0)})^{\vee}\big)^G,$$
where $\mathbb{L}^{\bullet}$ denotes the cotangent complex and $\mathcal{I}$ the sheaf of ideals corresponding to the closed immersion $[\mu^{-1}(0)^{(n)}] \hookrightarrow [\mu^{-1}(0)^{(n+1)}/G]$. 
This follows directly from the fact that $\mu^{-1}(0)^{(n)}$ is affine and therefore the kernel of 
$\Aut((\mu^{-1}(0)^{(n+1)},0)) \rightarrow \Aut(\mu^{-1}(0)^{(n)},0)$ is given by $\mathbb{H}^0(\mu^{-1}(0)^{(n)},\mathcal{I}\otimes (\mathbb{L}^{\bullet}_{[\mu^{-1}(0)/G]})^{\vee})$.
\end{proof}
We remark that we do not assert surjectivity of the transition maps $\Sb^{(n+1)} \to \Sb^{(n)}$. However, the map $\Sb \to \Sb^{(1)}$ will be shown to be surjective in the following lemma.

\begin{lemma}\label{lemma:split-surjection}
There exists a split surjection $r\colon \mathbb{S} \to Z = Z_{\Sp(T^*V)}(G)$, with the kernel being a (geometrically) connected pro-unipotent group scheme.
\end{lemma}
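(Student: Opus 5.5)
The plan is to build $r$ as ``taking the linear part'', to build the splitting from the linear action of $Z$ on the linear model, and to get pro-unipotence of the kernel from the filtration in Lemma \ref{lemma:def-theory}.

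\textbf{Construction of $r$.} An element $s\in\mathbb{S}$ is a pointed symplectic automorphism of $[\widehat{\mu^{-1}(0)}/G]$ fixing $0$. It therefore induces an automorphism of the stabiliser group $G$ at $0$. It also induces an automorphism of the tangent complex at $0$, namely of $[\mathfrak g\to T^*V\to\mathfrak g^\vee]$. Because of the constraint in Definition \ref{defi:Sigma} (that the identification of stabilisers is the fixed one), the induced automorphism of $G$ is the identity. Taking $\mathcal H^0$ of the tangent complex, $s$ gives a $G$-equivariant linear automorphism of $T^*V$. Symplecticity of $s$ forces this automorphism to preserve the symplectic form. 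So it lies in $Z=Z_{\Sp(T^*V)}(G)$, and we set $r(s)$ to be this element. It is a group homomorphism because it is a first-order invariant, and it factors through $\mathbb{S}\to\mathbb{S}^{(1)}$.

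\textbf{Splitting.} Every $z\in Z$ acts linearly on $T^*V$, commutes with $G$ and preserves the symplectic form. By uniqueness of the momentum map for linear actions (the quadratic one, normalised by $\mu(0)=0$), $z$ preserves $\mu$, hence preserves $\mu^{-1}(0)$. Thus $z$ induces a pointed symplectic automorphism of $[\widehat{\mu^{-1}(0)}/G]$ acting trivially on $G$. This gives a homomorphism $Z\to\mathbb{S}$ with $r\circ(\text{this map})=\mathrm{id}_Z$, which proves that $r$ is split surjective.

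\textbf{Kernel.} Let $K=\ker r$ and $K^{(n)}=\ker(\mathbb{S}^{(n)}\to Z)$. By Lemma \ref{lemma:def-theory}, $K=\varprojlim K^{(n)}$, and the fibres of $K^{(n+1)}\to K^{(n)}$ are vector groups $\G_a^r$, given by $G$-invariants of a coherent cohomology module on the affine scheme $\mu^{-1}(0)^{(n)}$.

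I need to check that $K^{(1)}$ is trivial, or at least unipotent. Consider a first-order pointed automorphism acting trivially on the stabiliser and on $\mathcal H^0$. The only remaining datum is an $\mathcal H^{-1}$ (gauge) contribution, which lies in $\mathfrak g$-valued data. This should be absorbed by $G$-equivariant identifications, or is itself additive.

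Each $K^{(n)}$ is then an iterated extension of vector groups, hence unipotent and geometrically connected. The inverse limit is pro-unipotent and geometrically connected. I will treat ``connected'' as holding levelwise, which is the natural meaning for a pro-group.

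\textbf{Main obstacle.} The delicate step is well-definedness and homomorphy of $r$ on the stack level, i.e.\ that the ``linear part'' of an automorphism of the quotient stack lands in $\Sp(T^*V)^G$ rather than in some twisted normaliser. Here the rigidification of the stabiliser identification in Definition \ref{defi:Sigma} is essential.

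A second concern is the surjectivity issue noted after Lemma \ref{lemma:def-theory}. The transition maps $\mathbb{S}^{(n+1)}\to\mathbb{S}^{(n)}$ need not be surjective, so $K$ is not obviously an extension of all the $\G_a^r$'s. I would handle this by arguing with the images, which are closed subgroups of unipotent groups and hence still unipotent. For connectedness I would use that in characteristic $p$ large relative to the weights, closed subgroups of vector groups arising as images of algebraic homomorphisms from connected groups are connected.
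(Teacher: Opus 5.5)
Your proposal follows the paper's proof step for step. In both, $r$ is the differential $d_0$ at the base point, landing in $Z$ by $G$-equivariance; the splitting is the linear symplectic $Z$-action, which preserves $\mu^{-1}(0)$; and the kernel is controlled by the filtration of Lemma~\ref{lemma:def-theory}. The paper removes your hedge on $K^{(1)}$ by identifying $\mathbb{S}^{(1)}$ directly with $Z$. That identification holds because $\mu$ is quadratic, so the first-order neighbourhood of $0$ in $\mu^{-1}(0)$ is the same as that of $T^*V$.

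The paper does not address how non-surjectivity of the transition maps affects connectedness. Your proposed fix for that point is circular, since it needs the source group to be already connected. A non-circular route is to conjugate by the scaling $\Gb_m$-action on $T^*V$. This action preserves $\mathbb{S}$, fixes $Z$ pointwise, and contracts $\ker r$ to the identity as $t\to 0$.
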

\begin{proof}
The surjection $r$ can be explicitly defined as the differential in $0$:
$$r=d_0\colon \mathbb{S} \to \Sp(V^{\vee}\oplus V).$$
By $G$-equivariance, it factors through the centralizer scheme $Z$. Surjectivity of this map follows from the existence of a splitting to which we turn in the next paragraph.

By definition, $Z=\Sp(V^{\vee}\oplus V)^G$, and therefore there is a natural morphism
$$Z \to \Hom(([T^*V/G],0),([T^*V/G],0)).$$
Furthermore, by the $G$-equivariance of the momentum map
$\mu\colon T^*V \to \mathfrak{g}^{\vee}$, the linear symplectic $Z$-action on $T^*V$ preserves $\mu^{-1}(0) \subset T^*V$.
By means of this construction, we obtain a splitting $i\colon Z \to \mathbb{S}$.

The statement on the kernel being a connected pro-algebraic group is a consequence of the deformation-theoretic description of Lemma \ref{lemma:def-theory}. To see this, one notes that $\mathbb{S}^{(1)}$, the group of pointed symplectic automorphisms of the first order infinitesimal neighbourhood of $0$ in $\mu^{(-1)}(0)$, agrees with $Z$. We therefore obtain from \emph{loc. cit.} that the fibre of the surjection
$\mathbb{S} \twoheadrightarrow \mathbb{S}^{(1)}=Z$ is given by a pro-unipotent group scheme, which is geometrically connected.
\end{proof}

\begin{definition}\label{defi:symplectic-linearisation}
 An isomorphism of pointed formal stacks 
$$\iota\colon(\widehat{(\Xc_S)}_{\bar{x}},\bar{x})\dashrightarrow ([\widehat{(\mu^{-1}(0))_S} / G],0)$$ 
 preserving $\omega$ and inducing an isomorphism of group schemes $\Aut_{\Xc}(\bar{x}) \simeq G_k$ is called a \emph{formal symplectic linearisation near $\bar{x}$}.
\end{definition}

\begin{proposition}\label{prop:symp-linear}
   If $\Sigma_{\Xc,\bar{x}} \neq \emptyset$ and $p \geq \dim \mathcal{H}^0((\BL_{\Xc/R})_{\bar{x}}^{\bullet})$ and $(\Xc,\bar x)$ is well-pointed  (see Definition \ref{defi:well-pointed}), then the $\Sb$-torsor $\Sigma_{\Xc,\bar{x}}$ is trivial. Hence, there exists a formal symplectic linearisation near $\bar{x}$ under these assumptions.
\end{proposition}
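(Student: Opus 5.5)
The plan is to show that the $\Sb$-torsor $\Sigma_{\Xc,\bar{x}}$ over the small étale site of $R$ has a section over $R$ itself. Since $R$ is Henselian with finite residue field $k$, torsors under smooth affine group schemes over $R$ are classified by their restriction to $k$: a torsor under a smooth group scheme is trivial as soon as its special fibre has a $k$-point, because smoothness and Hensel's lemma let the point lift. So the argument runs in two stages. First we reduce from $\Sb$ to its quotient $Z$. Then we invoke the vanishing of $H^1(k,Z)$ from Theorem \ref{CentralizerThm}.

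\textbf{Step 1: reduce from $\Sb$ to $Z$.} By Lemma \ref{lemma:split-surjection} there is a split surjection $r\colon \Sb\to Z$ whose kernel $U$ is a geometrically connected pro-unipotent group scheme. By Lemma \ref{lemma:def-theory}, $U=\varprojlim U^{(n)}$, and each successive kernel is a vector group $\G_a^r$. Push the torsor $\Sigma_{\Xc,\bar{x}}$ forward along $r$ to obtain a $Z$-torsor $r_*\Sigma$.

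\textbf{Step 2: trivialise $r_*\Sigma$ using Theorem \ref{CentralizerThm}.} The stabiliser $G_R=\Aut_\Xc(x)$ comes from well-pointedness. The hypothesis $p\ge\dim \mathcal{H}^0$ is the inequality $p\ge \dim T^*V$ needed both in Theorem \ref{CentralizerThm} and in Lemma \ref{lemma:Z-smooth}. Lemma \ref{lemma:Z-smooth} gives that $Z$ is smooth over $R$, so $H^1(R,Z)\to H^1(k,Z)$ is injective by Hensel. Theorem \ref{CentralizerThm} gives $H^1(k,Z)=0$, provided the symplectic $G_k$-representation $T^*V_k$ has no type $1$ summands.

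This is the step I expect to be the main obstacle. The representation $T^*V=V\oplus V^\vee$ is presented as a cotangent space, which suggests a type-2 decomposition. However, the Lagrangian $V$ is not a priori $G$-stable over $k$. What I would try first is to use the existence of some element of $\Sigma_{\Xc,\bar{x}}(S)$ for an étale $S/R$. This identifies the representation with that of $[\mu^{-1}(0)/G]$, and the $G$-action on a cotangent $T^*V$ is of the form $V\oplus V^\vee$ geometrically. Semisimplicity (Theorems \ref{EugeneThm}, \ref{SerreThm}) together with the uniqueness of summands in Theorem \ref{KnopThm}(ii) then shows that no type $1$ summands occur, and this property descends to $k$. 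Failing this, I would add "no type $1$ summands" as the implicit context of the intended application (quiver-type local models).

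\textbf{Step 3: lift through the unipotent kernel.} The fibres of $\Sigma\to r_*\Sigma$ are torsors under twisted forms of $U$. Choose an $R$-section of $r_*\Sigma$ and lift it successively through the tower $\Sigma^{(n+1)}\to\Sigma^{(n)}$. At each stage the obstruction lies in $H^1$ of a twisted vector group, that is, of a quasi-coherent module over the affine base $R$, which vanishes. Hence compatible sections exist at every finite level, and passing to the limit gives an $R$-point of $\varprojlim \Sigma^{(n)}=\Sigma$. The possible failure of surjectivity of $\Sb^{(n+1)}\to\Sb^{(n)}$ is harmless here: we only need the torsors $\Sigma^{(n)}$ to admit sections, and we choose those sections compatibly by working inductively with the images of $\Sigma$ itself. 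The resulting section is a formal symplectic linearisation as in Definition \ref{defi:symplectic-linearisation}, and it is compatible with the stabiliser identification because $r$ records exactly the differential at $0$.
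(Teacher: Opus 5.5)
Your argument follows the paper's proof, which consists of four steps. First, the split surjection $r\colon\Sb\to Z$ of Lemma \ref{lemma:split-surjection}. Second, the vanishing $H^1_{\text{\'et}}(\Spec R,K)=0$ for the pro-unipotent kernel $K$. Third, the identification $H^1(R,Z)=H^1(k,Z_k)$, using smoothness of $Z$ (Lemma \ref{lemma:Z-smooth}) and Henselianity. Fourth, $H^1(k,Z_k)=0$ by Theorem \ref{CentralizerThm}. Your Step 3 simply spells out the kernel step that the paper asserts in one line.

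The obstacle you flag in Step 2 is genuine, and the paper does not resolve it either. Theorem \ref{CentralizerThm} and Lemma \ref{lemma:Z-smooth} both assume that $T^*V$ has no symplectic summand of type 1, yet the proof of the proposition cites only the bound on $p$.

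\emph{Your first idea cannot work.} Here $T^*V$ is just notation for $\mathcal{H}^0((\BL^{\bullet}_{\Xc/R})^{\vee}_x)$, and even a genuine $G$-stable Lagrangian would not help. Take $U$ absolutely irreducible with an invariant symplectic form $\omega_U$, for instance the standard representation of $\mathrm{SL}_2$. Then $U\oplus U^{\vee}$ with its standard form is the orthogonal sum of the $G$-stable subspaces $\{(u,\pm\omega_U(u,\cdot))\}$. These are two type 1 summands, carrying the forms $\pm 2\omega_U$. In this case $Z=O(M)$ for a two-dimensional quadratic multiplicity space $M$. Moreover $H^1(k,O(M))\neq 0$, because nondegenerate quadratic forms of fixed dimension over $\Fb_q$ ($q$ odd) fall into two classes, distinguished by the discriminant. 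So the no-type-1 condition cannot be deduced from the stated hypotheses.

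\emph{Your fallback is the right repair.} Impose the condition as a hypothesis, best read geometrically. Then $Z_{\bar k}$ is a product of groups $\Sp_{2m}$ and $\GL_m$, hence connected, and Lang's theorem already gives $H^1(k,Z)=0$. The condition holds in all of the paper's applications. Over $\bar k$ the stabiliser is $\prod_i\GL_{d_i}$ acting on $\bigoplus_{i,j}E_{ij}\otimes\Hom(\bar k^{d_i},\bar k^{d_j})$. Its irreducible constituents are the following. The trivial representation and $\mathfrak{sl}_{d_i}$ are orthogonal via the trace form, since $p>d_i$. For $i\neq j$, $\Hom(\bar k^{d_i},\bar k^{d_j})$ is not self-dual, because the centre acts on it through the character $t_i^{-1}t_j$.
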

\begin{proof}
As seen in Lemma \ref{lemma:split-surjection} there is a split surjection $r\colon \Sb \to Z=Z_{Sp(T^*V)(G)}$. The kernel $K$ being pro-unipotent, we have $H^1_{\text{\'et}}(\Spec R,K) =0$. Similarly, $H^1_{\text{\'et}}(\Spec R,Z)=H^1_{\text{\'et}}(\Spec k,Z_k)$ since $Z$ is smooth by Lemma \ref{lemma:Z-smooth}, and $H^1_{\text{\'et}}(\Spec k,Z_k)$ vanishes, by virtue of Theorem \ref{CentralizerThm}, as $p$ satisfies the stated inequality. 
\end{proof}

The result above should be understood as a descent statement for the existence of formal symplectic linearisations. The non-triviality condition on the torsor amounts to the existence of a formal symplectic linearisation near $\bar{x}$, after performing an \'etale base change (and thereby possibly enlarging the residue field in $\bar{x}$). The proposition states that it is then also possible to find a formal symplectic linearisation without passing to an \'etale neighbourhood first. 

For an abelian group $A$ we denote by $A' \subset A$ the union of $A[n]$ where $n$ ranges over all integers satisfying $n \notin p\Zb$.
\begin{lemma}\label{lemma:Br-stack}
\begin{itemize}
    \item[(a)] $H^2_{\rm fppf}(BG,\Gb_m) \simeq \{\text{central extensions of $G$ by $\G_m$}\}/\text{iso}$. 
    \item[(b)] Assume that $\widehat{\Xc}_{\bar{x}}$ possesses a formal symplectic linearisation, that is, it is equivalent to a quotient stack $[\widehat{Y}_x/G]=[\widehat{\mu^{-1}(0)}/G]$, where $G=\Aut_{\Xc}(x)$, where $x$ is the fixed lift of $\bar{x}$ used in Definition \ref{defi:Sigma}. Pullback along the natural morphism $i_{\bar{x}}\colon BG \to \widehat{\Xc}_{\bar{x}}$ induces an isomorphism
$H^2_{\rm fppf}(\widehat{\Xc}_{\bar{x}},\Gb_m)' \simeq H^2_{\rm fppf}(BG,\G_m)'$.
\item[(c)] Any $\G_m$-gerbe $\widehat{\BX}_x \to \widehat{\X}_x$ which is of order $n \notin p\Zb$ is equivalent to $[\widehat{\mu^{-1}(0)}/\widetilde{G}]$, where $\widetilde{G}$ is a central extension of $G$ by $\Gb_m$.
\end{itemize}
\end{lemma}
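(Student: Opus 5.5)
For (a), I will use the standard description of $\G_m$-gerbes on a classifying stack. A class in $H^2_{\rm fppf}(BG,\G_m)$ is represented by a $\G_m$-gerbe $\mathcal{B}\to BG$. Because $R$ is Henselian with finite residue field, I expect to be able to trivialise the pullback of $\mathcal{B}$ along the point $\Spec R\to BG$. I would first try to do this using $\mathrm{Br}(R)=\mathrm{Br}(k)=0$. If that does not directly cover an fppf class, I would instead replace $\Spec R$ by the atlas and allow the point to be chosen after an étale cover. A chosen object $\tilde e\in\mathcal{B}(R)$ over the base point then has automorphism group sheaf $\widetilde G=\Aut(\tilde e)$. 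This sits in an exact sequence $1\to\G_m\to\widetilde G\to G\to 1$, and the extension is central because the gerbe is banded by $\G_m$. Conversely, such an extension gives the gerbe $B\widetilde G\to BG$. Changing $\tilde e$ changes $\widetilde G$ by an isomorphism of extensions, so the two constructions should be mutually inverse on isomorphism classes. Representability of $\widetilde G$ (as a $\G_m$-torsor over $G$) follows from fppf descent.

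For (b), I plan to use the formal linearisation to write $\widehat{\Xc}_{\bar x}=[\widehat{Y}/G]$ with $\widehat Y=\widehat{\mu^{-1}(0)}$. The map $i_{\bar x}$ is induced by the $G$-fixed point $0\in\widehat Y$. The key geometric input is the $\G_m$-scaling action on $T^*V$. It commutes with $G$, and since $\mu$ is quadratic it preserves $\mu^{-1}(0)$. It also contracts everything to $0$, so $\widehat Y$ is $G$-equivariantly contractible onto $0$. I then intend to show that restriction to $0$ is an isomorphism on prime-to-$p$ cohomology, in two steps.

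\textbf{Step 1.} I work with the Leray / Hochschild–Serre spectral sequence for $\widehat Y\to[\widehat Y/G]$. The fppf and étale theories agree for smooth $\G_m$, and I apply Kummer sequences for $n\notin p\Zb$. This reduces the claim to the statement that $H^j_{\text{\'et}}(\widehat Y\times G^m,\mu_n)\to H^j_{\text{\'et}}(\{0\}\times G^m,\mu_n)$ is an isomorphism.

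\textbf{Step 2.} I prove that statement by proper-base-change-type rigidity for complete local rings. Formal functions on the $n$-th infinitesimal neighbourhoods are used to pass to $\widehat Y^{\mathrm{alg}}$, together with Gabber's affine analogue of proper base change for the Henselian pair $(\widehat{Y}^{\mathrm{alg}}\times G^m,\{0\}\times G^m)$. Together with the Picard-group part of the Kummer sequence, this gives $\Pic$ and $\mathrm{Br}$ isomorphisms on the prime-to-$p$ parts. Here $\Pic$ of the complete local ring is trivial, and units split as $R^\times$ times $1+\mathfrak m$, which is uniquely $n$-divisible. Assembling these and comparing $H^2(-,\G_m)[n]$ through the Kummer sequences then yields (b).

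Part (c) follows by combining the two. The gerbe $\widehat{\BX}_x$ has order $n$ prime to $p$, so by (b) its class equals the pullback of $i_{\bar x}^*$ of itself. By (a) that class corresponds to a central extension $\widetilde G$, and $B\widetilde G\to BG$ pulls back along $[\widehat Y/G]\to BG$ to $[\widehat Y/\widetilde G]$, with $\G_m$ acting trivially on $\widehat Y$. Two gerbes with the same class are equivalent, which proves (c).

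I expect the main obstacle to be Step 2 of (b). The difficulty is making the comparison rigorous for the stacky, formal object, where the algebraisation and Henselian-pair arguments have to commute with the simplicial presentation by $G^m$. I would handle this degree by degree in the Čech nerve and conclude with a spectral sequence comparison.
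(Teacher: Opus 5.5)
Your arguments for (a) and (c) are sound and follow the paper. In (a) you are more careful than the paper's appeal to abstract group cohomology, since you note that the Brauer group of the base must vanish. In (c), your retraction $[\widehat Y/G]\to BG$ supplies a step the paper leaves implicit: that every prime-to-$p$ class is pulled back from $BG$.

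The gap is in Step 2 of (b).

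\emph{The pair is not Henselian.} Completion does not commute with $\times G^m$, so $(\widehat Y^{\mathrm{alg}}\times G^m,\{0\}\times G^m)$ is not a Henselian pair. Already in $k[[t]][s^{\pm1}]$ the element $1+ts$ is not a unit, so $(t)$ is not contained in the Jacobson radical.

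\emph{There is no $G$-action on $\widehat Y^{\mathrm{alg}}$.} Worse, $G$ does not act on $\Spec\widehat{\Oc}_{Y,0}$ at all. For the scaling action, $\sum a_it^i\mapsto\sum a_it^is^i$ lands only in a completed tensor product. So $\widehat Y^{\mathrm{alg}}\times G^m$ is not the \v{C}ech nerve of $\widehat Y\to[\widehat Y/G]$; that nerve is the completion of $Y\times G^m$ along $G^m_k$. Gabber's theorem therefore cannot be applied as you state it.

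\emph{The Picard remark targets the wrong object.} To pass from $H^2(\mu_n)$ to $H^2(\Gb_m)[n]$ on the quotient stack, you need $\Pic([\widehat Y/G])/n\to\Pic(BG)/n$ to be surjective. This comes from characters of $G$ via the retraction. Triviality of $\Pic$ of the complete local ring is not what is needed.

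\emph{The repair.} Stay on the formal side: $\widehat{\Xc}_{\bar x}=\varinjlim_N\Xc^{(N)}$ with $\Xc^{(N)}=[Y^{(N)}/G]$. Its \v{C}ech nerve is levelwise $Y^{(N)}\times G^m$, a nilpotent thickening of $G^m_k$. The $\mu_n$-comparison is then just topological invariance of the \'etale site. You need neither algebraisation, nor Gabber, nor the $\Gb_m$-contraction (which your argument never actually uses).

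\emph{Comparison with the paper.} Once repaired, your Kummer route is essentially the paper's d\'evissage done with $\mu_n$ instead of $\Gb_m$. The paper works with $\Gb_m$ directly:
\begin{itemize}
\item The kernel of $\Oc^\times_{\Xc^{(N+1)}}\to\Oc^\times_{\Xc^{(N)}}$ is $1+\mathcal{I}_N\cong\mathcal{I}_N$.
\item $\mathcal{I}_N$ is a square-zero ideal killed by the maximal ideal at $0$, hence by $p$, so its cohomology is $p$-torsion.
\item The long exact sequence then gives $H^2(\Xc^{(N+1)},\Gb_m)'\cong H^2(\Xc^{(N)},\Gb_m)'$, starting from $\Xc^{(0)}=BG_k$.
\end{itemize}
This avoids Kummer sequences, the descent spectral sequence and all Picard bookkeeping.

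Either way, a limit argument over $N$ is still needed to reach the formal stack. The paper leaves this implicit in ``It suffices to show''. Also, the paper's $BG$ is $BG_k$, while your retraction needs $BG$ over the base. Comparing the two is the same d\'evissage with $Y$ replaced by the point $0$.
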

\begin{proof}
Assertion (a) is a well-known property in abstract group cohomology (see \cite[Theorem 6.6.3]{Weibel}). Adopting a functorial perspective, one directly reduces the algebro-geometric assertion (a) to the classical case. 

Assertion (b) is well-known for Brauer classes on schemes. We include a proof. Denote by $Y^{(n)}_{\bar{x}}$ the $n$-th order infinitesimal neighbourhoods such that $\widehat{Y} = \varinjlim Y_{\bar{x}}^{(n)}$ as a formal scheme. We write $\Xc^{(n)}_{\bar{x}}$ for the quotient stack $[Y^{(n)}_{\bar{x}}/G]$. We denote by $\mathcal{I}_n$ the square-zero sheaf of ideals on $\Xc^{(n+1)}_{\bar{x}}$ corresponding to this closed immersion. Note that this ideal is actually scheme-theoretically supported on $BG = [\Spec k/G]$. In particular, multiplication by $p$ is the zero map on $\mathcal{I}_n$. 

It suffices to show that $H^2_{\rm fppf}(\Xc^{(n+1)}_{\bar{x}},\Gb_m)' \to H^2_{\rm fppf}(\Xc^{(n)}_{\bar{x}},\Gb_m)'$ is an isomorphism. There is a short exact sequence of fppf sheaves
$$0 \to \mathcal{I} \to \Oc_{\Xc^{(n+1)}}^{\times} \to \Oc_{\Xc^{(n)}}^{\times} \to 0.$$

The prime-to-$p$ parts of the higher cohomology groups $H^i_{\rm fppf}(BG,\mathcal{I}_n)' = 0$ appearing in this long exact sequence vanish. To see this, we use that multiplication by $p$ is the zero map on $\mathcal{I}_n$, as mentioned previously.

Given a extension $\widetilde{G}$ of $G$ by $\G_m$, the quotient stack $[\widehat{Y}_{\bar{x}}/\widetilde{G}] \to [\widehat{Y}_x/G] = \widehat{\Xc}_{\bar{x}}$ is a $\G_m$-gerbe $\alpha$ on $\widehat{\Xc}_{\bar{x}}$ such that the class of $i_{\bar{x}}^*\alpha \in H^2_{\rm fppf}(BG,\G_m)$ corresponds to $\widetilde{G}$. This proves part (c).
\end{proof}

As a direct consequence, we obtain the following corollary.

\begin{corollary}\label{cor:symp-lin}
Let $\Xc/R$ be a weakly symplectic stack with adequate moduli space $\pi\colon \Xc \to X$. Let $\BX \to \Xc$ be a $\Gb_m$-gerbe on $\Xc$, which we assume to be of order $n \notin p\Zb$.

Assume that there exists a covering in the \'etale topology $\{U_i \to X\}_{i \in I}$ and a collection of triples $(G_i,\rho_i,\widetilde{G}_i)$ consisting of a reductive group $G_i$, a representation $\rho_i\colon G_i \circlearrowright V_i$ and a central extension $\widetilde{G}_i$ of $G_i$ by $\Gb_m$ such that we have commutative diagrams
\[
\xymatrix{
\BX \times_X U_i \ar[r]^-{\simeq} \ar[d] & [T^*V_i // \widetilde{G}_i] \ar[d] \\
U_i \ar[r]^-{\simeq} & T^*V_i // G_i,
}
\]
where we recall that $[T^*V_i//\widetilde{G}_i]$ denotes the quotient stack $[\mu_i^{-1}(0)/\widetilde{G}_i]$, and $\mu_i\colon T^*V_i \to \mathfrak{g}_i^{\vee} = \Lie(G_i)^{\vee}$ the momentum map for the $G_i$-action on $T^*V_i$. Then, $\Xc$ is formally symplectically linearisable near $\bar{x} \in X(k)$ (see Definition \ref{defi:symplectic-linearisation}) if the assumption $p \geq \dim \mathcal{H}^0((\BL_{\Xc/R})_{\bar{x}}^{\bullet})$ holds and $(\Xc,\bar{x})$ is well-pointed (see Definition \ref{defi:well-pointed}). Furthermore, under the same assumptions on $p$ and $\bar{x}$ there exists an equivalence of formal stacks
$$\widehat{\BX}_{\bar{x}} \simeq [\widehat{\mu^{-1}(0)}/\widetilde{G}_{\bar{x}}],$$
inducing the formal symplectic linearisation after $\Gb_m$-rigidification.
\end{corollary}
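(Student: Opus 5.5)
The plan is to reduce the corollary to Proposition \ref{prop:symp-linear} and Lemma \ref{lemma:Br-stack}(c). The key point is that the étale-local models provide a point of the torsor $\Sigma_{\Xc,\bar{x}}$ after an étale base change.

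First, I choose an index $i$ and a point $\bar{u}\in U_i$ over $\bar{x}$. After replacing $R$ by a suitable étale extension $S/R$, I may assume $\bar{u}$ is $k'$-rational, where $k'$ is the residue field of $S$. The given isomorphism $\BX\times_X U_i\simeq[\mu_i^{-1}(0)/\widetilde{G}_i]$ is rigidified by $\Gb_m$. Since $\Gb_m$ is central in $\widetilde{G}_i$ and acts trivially on $\mu_i^{-1}(0)$, this yields $\Xc\times_X U_i\simeq[\mu_i^{-1}(0)/G_i]$ over $U_i\simeq T^*V_i//G_i$.

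Completing at $\bar{u}$ gives $\widehat{(\Xc_S)}_{\bar{x}}\simeq \widehat{[\mu_i^{-1}(0)/G_i]}_{\bar u}$, because $U_i\to X$ is étale. The point $\bar u$ corresponds to a closed $G_i$-orbit in $\mu_i^{-1}(0)$. By the Luna étale slice theorem for symplectic quotients (or its formal version), the completion at that orbit is $[\widehat{\mu_{x}^{-1}(0)}/G_x]$. Here $G_x$ is the stabiliser, which is reductive, and $T^*V_x$ is the symplectic normal slice. This slice is identified with $\mathcal{H}^0((\BL^\bullet_{\Xc/R})^\vee_x)$, and the identification respects $\omega$ because the models in the hypothesis are assumed compatible with the weak symplectic structures.

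This step, identifying the completion at a non-zero point of the local model with a linear symplectic model centred at the point, is the one I expect to be the main obstacle in mixed characteristic. I would first try to use the large-$p$ hypothesis together with Theorem \ref{EugeneThm} and Theorem \ref{SerreThm}: these give semisimplicity of the stabiliser action, which supplies a $G_x$-stable symplectic complement to the orbit tangent space. Then a formal Luna-type argument (Hensel's lemma for the orbit map) proves the slice statement.

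This produces an element of $\Sigma_{\Xc,\bar{x}}(S)$, so $\Sigma_{\Xc,\bar{x}}\neq\emptyset$. Together with the hypotheses that $p\geq\dim\mathcal{H}^0$ and that $(\Xc,\bar{x})$ is well-pointed, Proposition \ref{prop:symp-linear} implies that the torsor is trivial over $R$. This gives a formal symplectic linearisation $\widehat{\Xc}_{\bar{x}}\simeq[\widehat{\mu^{-1}(0)}/G]$ with $G=\Aut_\Xc(x)$.

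For the gerbe, I restrict $\BX$ to $\widehat{\Xc}_{\bar{x}}$. Its order $n$ is prime to $p$, so by Lemma \ref{lemma:Br-stack}(b) its class is determined by its pullback to $BG$. By part (a), this pullback is a central extension $\widetilde{G}_{\bar{x}}$ of $G$ by $\Gb_m$. Part (c) then gives $\widehat{\BX}_{\bar{x}}\simeq[\widehat{\mu^{-1}(0)}/\widetilde{G}_{\bar{x}}]$. Its $\Gb_m$-rigidification is $[\widehat{\mu^{-1}(0)}/G]$, and by construction this equivalence lies over the linearisation just obtained.
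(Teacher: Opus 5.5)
Your proposal is correct and follows essentially the same route as the paper. You pass to an \'etale neighbourhood where the local model $[\mu_i^{-1}(0)/G_i]$ applies, and replace $G_i$ by the stabiliser $H=\Aut_{\Xc}(\bar{x})$ via a symplectic slice, obtained by splitting off $\mathfrak{g}_i/\mathfrak{h}\oplus(\mathfrak{g}_i/\mathfrak{h})^{\vee}$ using large-$p$ semisimplicity; you then conclude $\Sigma_{\Xc,\bar{x}}\neq\emptyset$, invoke Proposition \ref{prop:symp-linear}, and deduce the gerbe statement from Lemma \ref{lemma:Br-stack}(c), exactly as the paper does.
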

\begin{proof}
Let $\bar{x} \in X(k)$ be a $k$-point and $i \in I$ an index such that $\bar{x}$ lies in the image of the \'etale-open $U_i \to X$. Let us choose a finite field extension $k'/k$ and a $k'$-point $\bar{y}$ above $\bar{x}$. 

It follows from the assumption that there is an equivalence $\widehat{(\Xc \times_X U_i)}_{\bar{y}} \simeq [\widehat{T^*V_i}//G_i]$. This presentation as a symplectic quotient stack needs to be modified since (the special fibre of) $G_i$ could be strictly larger than $\Aut_{\Dc}(\bar{x})$.

Let us denote by $\mathfrak{h} \subset \mathfrak{g}_i = \Lie(G_i)_k$ the Lie algebra of $H=\Aut_{\Xc}(\bar{x}) = \Stab_{(G_i)_k}(\bar{x})$. The properties of the momentum map then give us a diagram
$$\mathfrak{g}_i/\mathfrak{h} \hookrightarrow T^*V \twoheadrightarrow (\mathfrak{g}_i/\mathfrak{h})^{\vee}.$$
We conclude that the syplectic representation $T^*V$ contains the symplectic direct summand $\mathfrak{g}_i/\mathfrak{h} \oplus (\mathfrak{g}_i/\mathfrak{h})^{\vee}$. Here, we use the semisimplity property following from our assumption on $p$ using the results recalled in Subsection \ref{SemisimplicitySS}.

As a consequence of the symplectic semisimplicity of $T^*V$, there exists a complementary symplectic subrepresentation $W \subset T^*V$, such that $W \oplus \mathfrak{g}_i/\mathfrak{h} \oplus (\mathfrak{g}_i/\mathfrak{h})^{\vee} \simeq T^*V$. Note that there exists an isomorphism $W \simeq T^*U$ for a representation $U$ since $W$ is also of type 2 as a symplectic representation.

Consequently, we have that the natural map $$[W//H] \to [T^*V//G_i]$$ induces an equivalence of stacks in the formal neighbourhood of the image of $\bar{y}$.

Thus, $\Sigma_{\Xc,\bar{x}}$ is non-empty, thus it is a trivial $\Sb$-torsor by virtue of Proposition \ref{prop:symp-linear}, and there exists a formal symplectic linearisation near $\bar{x}$.
The last assertion follows directly from part (c) of Lemma \ref{lemma:Br-stack}.
\end{proof}

\section{Moduli of objects in 2-Calabi-Yau categories} \label{2CYsect}

\subsection{Definitions}
Let $R$ be a finitely generated $\BZ$-algebra and $S = \Spec(R)$. Consider an $R$-linear abelian category $\Cc$ as in \cite[Section 5.2]{GWZ24}. In particular, we assume that $\Cc$ is extension closed inside a larger auxiliary cocomplete category $\CB$ used to define the ext-groups $\Ext^i(X,Y)$ for objects $X,Y \in \Cc$ and $i\geq 0$. 

Henceforth, we assume that $\Cc$ is $2$-Calabi-Yau in the sense that $\Ext^i(X,Y) = 0$ for all $X,Y \in \Cc$ and $i > 2$ and for $i \in \{0,1,2\}$ there are bilinear pairings
\begin{equation}\label{cy2} \delta^i\colon \Ext^i(X,Y) \times \Ext^{2-i}(Y,X) \to R,\end{equation}
satisfying $\delta^i(A,B) = (-1)^i\delta^{2-i}(B,A)$.
We further require that $\delta^i$ be non-degenerate over every closed point $\Spec(k) \to S$. Finally, we assume that $\Cc_\BC$ is $2$CY as in \cite{Da24}.

 Let $\BM = \BM_\Cc$ denote the algebraic stack of objects in $\Cc$. We denote by $\pi_0(\BM)$ the \'etale sheaf on $S=\Spec R$, assigning to an \'etale-open $S' \to S$ the set of connected components of $\BM \times_S S'$.
 
 We will assume that each component of $\BM$ admits a moduli space in an appropriate sense:

\begin{enumerate}
\item\label{assumption:i} Each component $\BM_\gamma$ of the disjoint union 
\[\BM = \sqcup_{\gamma \in \pi_0(\BM)(S)} \BM_\gamma,\]
is a finite type linear quotient stack over $R$.

\item For each $\gamma \in \pi_0(\BM)(S)$ there exists an adequate moduli space $\tilde{\pi}_\gamma \colon \BM_\gamma \to \M_\gamma$.
\end{enumerate}

We write $\CM_\Cc = \sqcup_{\gamma \in  \pi_0(\BM)(S)} \CM_\gamma$ for the $\BG_m$-rigidification along the scalar automorphisms and $\pi_\gamma\colon \CM_\gamma \to \M_\gamma$ for the induced morphisms. For any closed point $\Spec(k) \to S$ we define the Euler-pairing
\begin{align*}
(\cdot,\cdot)_k \colon \BM(k) \times \BM(k) &\to \BZ,\\
X,Y &\mapsto \sum_{i=0}^2 (-1)^i \dim_k \Ext^i(X,Y).
    \end{align*}

The pairing $(\cdot,\cdot)_k$ locally constant and by \eqref{cy2} symmetric and thus defines a non-degenerate symmetric bilinear form on the monoid $(\pi_0(\BM)(k),\oplus)$. 

In many concrete examples (e.g., for the abelian category of semistable Higgs bundles of a fixed slope $\mu$ on a curve $C_S/S$), it is true (or true up to making appropriate choices) that the \'etale sheaf $\pi_0(\BM)$ is constant. In this case, one has the convenient property $\pi_0(\BM)(S) = \pi_0(\BM)(\Cb)$ at one's disposal.

While we do not know whether every complex-linear CY2 category $\CC_{\Cb}$ possesses a spreading out with this nice property, it is possible to always achieve the following.

\begin{definition}\label{assumption:i}
Let $\Cc_{\Cb}$ be a complex-linear CY2 category and $\gamma \in \pi_0(\BM)(\Cb)$ a connected component of its moduli stack of objects over $\Cb$. A spreading-out $\Cc_R$ to an $R$-linear CY2 category, where $R \subset \Cb$ is a finitely generated subring, is called \emph{$\gamma$-adapted} if for every pair $(\gamma_1,\gamma_2) \in \pi_0(\BM_{\Cc})(\Cb)$ with $\gamma_1\oplus \gamma_2 = \gamma$ we have that the germs of sections $\gamma_1,\gamma_2 \in \pi_0(\BM_{\Cc})_{\Spec \Cb}$ extend to global sections in $\pi_0(\BM_{\Cc})(S)$.    
\end{definition}

\begin{rmk}
The situation of which Definition \ref{assumption:i} attempts to steer clear from is where a connected component $\BM_{\gamma} \subset \BM_{\Cc_S}$ splits into several connected components after base change to $\Cb$. For instance, it could happen that the sheaf of connected components $\pi_0(\BM)$ is \'etale-locally constant and that the resulting action of $\pi_1^{\text{\'et}}(S,\Spec \Cb)$ on $\pi_0(\BM)(\Cb)$ is non-trivial. For a given $\gamma$, we we can always adjoin further elements to $R = \Gamma(S,\Oc_S)$ to ensure that $\gamma$ and all its direct summands $(\gamma_1,\gamma_2)$ are fixed by this action.
\end{rmk}

\begin{lemma}\label{lemma:well-pointed}
Every $\bar{x} \in \M_{\gamma}(k)$ can be extended to an $R$-point of $\Mc_{\gamma}(k)$. In particular, $(\Mc_{\gamma},\bar{x})$ is well-pointed in the sense of Definition \ref{defi:well-pointed}.   
\end{lemma}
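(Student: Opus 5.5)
We first make the statement precise. Here $R$ should be the ring of integers $\Oc_F$ (or more generally a Henselian dvr over $S$ with residue field $k$), as in the setting of Definition \ref{defi:well-pointed}. The claim is that a $k$-point $\bar{x}$ of $\Mc_\gamma$ lying over a closed point of $\M_\gamma$ lifts to an $R$-point $x$ whose automorphism group scheme $\Aut(x)$ is reductive over $R$ with special fibre $\Aut(\bar{x})$.

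\textbf{Step 1: reduce to the polystable representative.} The fibres of $\pi_\gamma$ over $k$-points contain a unique closed point. We may therefore assume $\bar{x}$ corresponds to a polystable object
$$\bar{E} = \bigoplus_{i=1}^m \bar{E}_i^{\oplus n_i},$$
with the $\bar{E}_i$ pairwise non-isomorphic stable objects in $\Cc_k$. After a finite extension of $k$, absorbed by étale base change on $R$ as in Proposition \ref{prop:symp-linear}, we may assume each $\bar{E}_i$ is defined over $k$ and geometrically stable. Its stabiliser is then
$$\Aut(\bar{x}) = \Big(\prod_i \GL_{n_i}\Big)\big/\Gb_m.$$

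\textbf{Step 2: lift each stable summand.} Each $\bar{E}_i$ is a stable object with $\End(\bar{E}_i) = k$. By the CY2 pairing \eqref{cy2}, $\Ext^2(\bar{E}_i,\bar{E}_i) \cong \End(\bar{E}_i)^\vee = k$, so the obstruction space is one-dimensional. The key point is that the obstruction lies in the kernel of the trace map $\Ext^2(\bar{E}_i,\bar{E}_i) \to k$. This trace-free part is zero, so deformations of $\bar{E}_i$ are unobstructed and the stack of simple objects near $\bar{E}_i$ is smooth over $S$; equivalently, the rigidified stable locus is smooth over $S$ with symplectic tangent space $\Ext^1$. I would justify the vanishing by the trace argument available in the setting of \cite[Section 5.2]{GWZ24}. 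If that is not available, an alternative is to use smoothness of $\Mc^{s}$ over $S$, which follows because its fibre dimension $2-(\gamma_i,\gamma_i)$ is constant by local constancy of the Euler pairing. Since $R$ is Henselian, smoothness gives an $R$-point $E_i$ lifting $\bar{E}_i$. Stability is open, so the generic fibre $E_{i,K}$ is again stable; this uses that the component $\gamma_i$ is defined over $S$, which is where $\gamma$-adaptedness (Definition \ref{assumption:i}) enters.

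\textbf{Step 3: assemble the lift and compute its stabiliser.} Set
$$E = \bigoplus_i E_i^{\oplus n_i} \in \BM_\gamma(R),$$
and let $x$ be its image in $\Mc_\gamma(R)$. Then $\Aut(E)$ is the unit group of the $R$-algebra $\End(E)$. Since $\Hom(E_i,E_j)$ is a finitely generated $R$-module, we compute it using upper semicontinuity together with the CY2 duality.
\begin{enumerate}
    \item For $i\neq j$, $\Hom(\bar{E}_i,\bar{E}_j)=0$, hence $\Hom(E_i,E_j)=0$.
    \item For $i=j$, $\End(E_i)=R$, since it is torsion-free, contains $R$, and has one-dimensional special fibre.
\end{enumerate}
We need the formation of $\Hom$ to commute with base change. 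This follows from the vanishing of the special-fibre $\Hom$'s and from $\Hom$ being the $H^0$ of a perfect complex. It follows that
$$\Aut(E) = \prod_i \GL_{n_i,R},$$
and after rigidification by $\Gb_m$ the stabiliser of $x$ is reductive over $R$ with special fibre $\Aut(\bar{x})$. This gives well-pointedness.

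\textbf{Main obstacle.} The main obstacle is Step 2, namely the unobstructedness of stable objects over a mixed-characteristic base. This requires either a trace map on $\Ext^2$ compatible with the pairing $\delta^2$ in the auxiliary category $\CB$, or a smoothness result for the stable locus over $S$, after possibly shrinking $S$ by generic smoothness. I would try the trace argument first.
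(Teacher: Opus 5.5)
\textbf{The proposal has a genuine gap in Step 1.} Your overall route is the paper's: pass to the polystable representative, lift each stable summand using smoothness of the stable locus and Henselianity, and take the direct sum. Your Step 3 computation of $\Aut(E)$ via base change for $\Hom$ is a useful detail the paper leaves implicit.

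The gap is that the finite extension of $k$ cannot be ``absorbed by \'etale base change on $R$''. Well-pointedness (Definition \ref{defi:well-pointed}) asks for a lift $x$ over the \emph{given} $R$, with $\Aut(x)$ reductive over $R$. This $R$-point is then used in Definition \ref{defi:Sigma} to define $T^*V$, $G$ and the torsor $\Sigma_{\Xc,\bar x}$ over $R$. A lift over an \'etale extension $R'$ does not provide this. You also cannot appeal to Proposition \ref{prop:symp-linear} to descend, because that proposition assumes well-pointedness as a hypothesis; the \'etale base change it allows concerns only the non-emptiness of $\Sigma_{\Xc,\bar x}$. So the reduction is circular. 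The case you discard, where stable summands of $\bar x$ are defined only over extensions $k_i\supsetneq k$ and are permuted by Frobenius, is exactly the twisted situation the rest of the paper depends on (twisted Ext-quivers, $\sigma$-impartiality, the transitive case in Lemma \ref{lemma:key}). Accordingly, your formula $\Aut(\bar x)=(\prod_i\GL_{n_i})/\Gb_m$ covers only the split case.

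The repair is what the paper does. Group the stable summands into Frobenius orbits and choose one representative $\bar E_i$ over $k_i$ per orbit. Lift it, using smoothness of $\Mc^{s}_{\gamma_i}$, to $E_i$ over the unramified extension $R_i/R$ with residue field $k_i$. Let $\widetilde E_i$ be $E_i$ regarded as an object over $R$ (restriction of scalars along $R\to R_i$). Its base change to $R_i$ is $\bigoplus_{\sigma\in\Gal(k_i/k)}\sigma^*E_i$, and its reduction is the orbit sum $\bigoplus_{\sigma}\sigma^*\bar E_i$ appearing in $\bar x$. Then $E=\bigoplus_i\widetilde E_i^{\oplus n_i}$ lifts $\bar x$ over $R$ itself. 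Your Step 3 argument, run after base change to the $R_i$, gives $\Aut(E)=\prod_i\Res_{R_i/R}\GL_{n_i}$, modulo scalars $\Gb_m$ after rigidification. This is reductive over $R$ with special fibre $\prod_i\Res_{k_i/k}\GL_{n_i}$ modulo scalars, as required.

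A smaller point concerns Step 2. Your fallback ``constant fibre dimension implies smoothness'' is not an argument on its own. The paper simply uses smoothness of the stable locus as part of the CY2 setting; you should either do the same or carry out the trace-free obstruction argument.
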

\begin{proof}
The $k$-point $\bar{x}$ corresponds to a semisimple object $\bar{x} = \bigoplus_{i=1}^m \bar{x}_i$, where $\bar{x}_i \in \M_{\gamma_i}^s(k_i) = \Mc_{\gamma_i}^s(k_i)$ for an appropriate field extension $k_i$. Since $\Mc_{\gamma_i}^s$ is smooth, it is possible to extend $\bar{x}_i$ to an $R_i$-point $x_i$, where $R_i/R$ denotes the corresponding unramified extension of $R$. The direct sum of these extensions is the requisite $x \in \Mc_{\gamma}(R)$ that extends $\bar{x}$.   
\end{proof}


\subsection{Linearisation in 2-Calabi-Yau categories}\label{2cylin}

We now recall a well-known construction through which linear quivers naturally arise from semisimple objects in abelian categories.   When applying this definition to the abelian categories of semistable sheaves of a fixed slope $\mu$, one recovers the familiar construction of the Ext-quiver associated to a polystable sheaf.

\begin{definition}
Let $\Cc$ be an $R$-linear abelian category and $X=\bigoplus_{i\in I} X_i^{d_i}$ a semisimple object with $I$ being a finite set and each factor $X_i \in \Cc$ being defined over $R$ and $X_i \neq X_j$ for $i\neq j$. The $R$-enriched Ext-quiver is defined to be the pair $(Q_0,E)$, where $Q_0=I$ and $E_{ij} = \Ext(X_i,X_j)$ for all $i,j \in I$. 
\end{definition}
\begin{construction}
For a Galois-extension $R'/R$ as before and an $R$-linear abelian CY2 category $\Cc_{R}$ we may associate a pair $(DQ_{R'},\tau)$ to any semisimple $X=\bigoplus_{i\in I} X_i^{d_i}$ as above, where we assume that $X_i \in \C_{R'}$  which is fixed by $\Gamma$ up to isomorphism. The $R'$-enriched quiver is the $R'$-linear Ext-quiver, as above, note that it is automatically doubled (by the CY2 condition) and the twisting data is obtained from the natural descent datum of $X\otimes_R R'$: by assumption 
$V_R=\Ext^1(X,X)\in \Mod(R)$
is an $R$-module and the base change to $R'$
$$(\Ext^1(X,X))_{R'} \simeq \bigoplus_{(i,j) \in I^2} \Ext^1(X_i,X_j)$$
is endowed with a natural $I^2$-grading. This structure is equivalent to the notion of an $R'$-enriched quiver. The natural Galois-action on this $R'$-module yields a Galois $1$-cocycle taking values within the subgroup $\Aut(DQ_{R'}) \subset \GL(V_{R'})$. The induced element of $H^1(\Gamma,\Aut(DQ_{R'}))$ is the requisiste twist $\tau$.\end{construction}
\begin{lemma}\label{lemma:sigma-impartial}
Let $X = \bigoplus_{i\in I} X_i^{d_i} \in \Cc_R$ be a semisimple object in $\Cc_R$ with simple factors in $\Cc_{R'}$ and let $R \to k$ be a homomorphism to a finite field $k$. Denoting by $\gamma \in \pi_0(\BM)(S)$ the connected component containing $[X]$, we assume that $\Cc_R/R$ is $\gamma$-adapted. 

Then, the twisted quiver obtained from $X\otimes_R k$ is $\sigma$-impartial (see Definition \ref{defi:impartial}), where $\sigma$ denotes the Frobenius automorphism $\sigma \in \Gal(k)$.
\end{lemma}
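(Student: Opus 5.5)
The proof reduces both impartiality conditions to local constancy of the Euler pairing $(\cdot,\cdot)_k$ on $\pi_0(\BM)$, combined with the CY2 relation $\dim\Ext^1(X_i,X_j) = \dim\Hom(X_i,X_j)+\dim\Ext^2(X_i,X_j) - (X_i,X_j)$. By $\gamma$-adaptedness, the components of the summands are global sections of $\pi_0(\BM)$ over $S$.

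The first step is to pass to $\bar k$ and set up notation. The simple summands of $X\otimes_R \bar k$ are the $X_i\otimes \bar k$, and Frobenius $\sigma$ permutes the index set $I$. For simple objects $X_i\neq X_j$ we have $\Hom(X_i,X_j)=0$, and by the Serre pairing \eqref{cy2} also $\Ext^2(X_i,X_j)=\Hom(X_j,X_i)^\vee=0$. Hence
\[
\dim \Ext^1(X_i,X_j) = -(X_i,X_j)_{\bar k}\quad (i\neq j),\qquad \dim\Ext^1(X_i,X_i) = 2-(X_i,X_i)_{\bar k}.
\]
The portrait (Definition \ref{defi:graph}) therefore has $E(i,j)=-(X_i,X_j)$ for $i\neq j$, since $\Ext^1(X_i,X_j)\cong\Ext^1(X_j,X_i)^\vee$ makes the two orientations have equal rank. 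It has $E(i,i)=1-\tfrac12 (X_i,X_i)$ loops at $i$, since $\dim\Ext^1(X_i,X_i)$ is even by the symplectic pairing.

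The second step is to show that $[X_i]$ and $[X_{\sigma(i)}]$ lie in the same connected component of $\BM_{\bar k}$, which is the key point. Consider the class $\gamma_1 = [X_i]$ and its complement $\gamma_2 = \gamma-\gamma_1$ in $\pi_0(\BM)(\Cb)$. One has to match components over $\bar k$ with components over $\Cb$, and I would do this via specialisation of the sheaf $\pi_0(\BM)$. Lifting $X_i$ to the strict henselisation via Lemma \ref{lemma:well-pointed} and its proof (simple objects have smooth moduli), we obtain a summand decomposition over the generic point. Since $\gamma$-adaptedness says these classes extend to global sections over $S$, they are fixed by the Galois action, so $\sigma$ fixes the component of $X_i$, i.e.\ $[X_{\sigma(i)}]=\sigma[X_i]=[X_i]$ in $\pi_0$. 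This step is the main obstacle: making precise that $\pi_0$ of the fibre over $\bar k$ is compared to $\pi_0$ over $\Cb$ compatibly with $\sigma$, using that the decompositions extend over $S$ and that $\pi_0(\BM)$ behaves like a locally constant sheaf after this adaptation.

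The third step derives both conditions from local constancy and bilinearity of the Euler pairing on $\pi_0$. Since the components agree, $(X_i,X_j)=(X_{\sigma(i)},X_j)$ for all $j$, and the same holds with any power $\sigma^\ell$. For $j\neq i,\sigma(i)$ this gives $E(i,j)=E(\sigma(i),j)$, which together with symmetry yields the first impartiality condition $E(i,j)=E(i,\sigma(j))$. For the loop condition with $\sigma^\ell(i)\neq i$, we get $(X_i,X_{\sigma^\ell(i)}) = (X_i,X_i)$, so
\[
E(i,\sigma^\ell(i)) = -(X_i,X_i) = 2E(i,i)-2 .
\]
This does not immediately give $E(i,i)+1$, so I would refine the argument using the evenness of $(X_i,X_i)$ and the specific convention of Definition \ref{defi:graph}. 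The remaining task is to check carefully whether the expected relation $E(i,i)=E(i,\sigma^\ell(i))-1$ requires $(X_i,X_i)=0$ on the relevant components, or whether a convention adjustment is needed. If the identity fails in general, I would recheck the loop count, for instance by counting $\Ext^1(X_i,X_i)$ with $\dim=2-(X_i,X_i)$ directly versus off-diagonal $-(X_i,X_i)$, which differ by exactly $2$, and thus by $1$ after the halving used for loops in the portrait. Finally, $E(i,i)=E(\sigma^\ell(i),\sigma^\ell(i))$ follows immediately from $(X_i,X_i)=(X_{\sigma^\ell(i)},X_{\sigma^\ell(i)})$.
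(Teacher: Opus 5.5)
Your Steps~1--3 are the paper's argument. $\Hom$ and $\Ext^2$ vanish between distinct simples, so $\rank\Ext^1(X_i,X_j)=-(X_i,X_j)$. Then $\gamma$-adaptedness together with local constancy of $\chi(R\Hom(-,-))$ gives $(X_i,X_j)=(X_{\sigma(i)},X_j)$. The paper handles your Step~2 in a single sentence: the component is defined over $k$ by $\gamma$-adaptedness, so $X_i$ and $X_{\sigma(i)}$ lie in the same component. Your treatment there is therefore at the level of the source.

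You were right to doubt the loop condition; this is not something you missed. The paper's proof ends with exactly your relation $2-\dim\Ext^1(X_i,X_i)=-\dim\Ext^1(X_i,X_{\sigma(i)})$ and declares the lemma proved.

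Write $\dim\Ext^1(X_i,X_i)=2g$. The conventions of Definition~\ref{defi:graph} then give $E(i,i)=g$ and $E(i,\sigma^\ell(i))=2g-2$. This matches the requirement $E(i,i)=E(i,\sigma^\ell(i))-1$ of Definition~\ref{defi:impartial} only when $g=3$. Two cases that violate it:
\begin{itemize}
\item the Galois-conjugate rank-one Higgs bundles of Construction~\ref{const:spectral-curve} on a genus-$2$ curve, where both sides equal $2$ but the condition demands $2=1$;
\item any isotropic $X_i$, where $g=1$ and $E(i,\sigma(i))=0$.
\end{itemize}
So the literal loop condition cannot be proved.

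Your proposed repair does not rescue it. The portrait halves only loops, not off-diagonal edges. Even if you halve both counts you get $g$ versus $g-1$, i.e.\ $E(i,i)=E(i,\sigma^\ell(i))+1$, which is the opposite sign.

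Do not try to force the displayed identity. The statement that is true, and that is used downstream, is the Euler-form identity $(X_i,X_j)=(X_i,X_{\sigma(j)})$ for all $i,j$, diagonal included. Equivalently, $\vec{E}(i,\sigma^\ell(i))=\vec{E}(i,i)-2$ in the doubled Ext-quiver. This is exactly how the paper itself reformulates impartiality for doubled quivers in the proof of Lemma~\ref{lemma:heritative}. Your Steps~2--3 prove that identity in full.
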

\begin{proof}
Let us denote $X\otimes_R R' \in \Cc_{R'}$ by abuse of notation by $X$. Since $X \simeq \sigma(X)$, the simple factors $X_i$ can be grouped into $\langle\sigma\rangle$-orbits $X_i^{d_i} \oplus X_{\sigma(i)}^{d_i} \oplus \cdots \oplus X_{\sigma^{\mathrm{ord}(\sigma)-1}(i)} = (\bigoplus_{j=1}^{\mathrm{ord}(\sigma)} X_{\sigma^j(i)})^{d_i}$. This induces an action of $\langle\sigma\rangle$ on the set of simple factors $I$ preserving the dimension vector $d$. We claim that the Ext-quiver satisfies the $\sigma$-impartiality condition from Definition \ref{defi:impartial}.
Let us assume that $i,j$ is a pair of distinct vertices with $X_{\sigma(i)} \neq X_j$. Then, we have that $\Hom(X_i,X_j) = \Ext^2(X_j,X_i) = 0$. Thus, the rank of the module $\Ext^1(X_i,X_j) = E_{ij}$ is equal to $-(X_i,X_j)$. Since the Euler characteristic of $R\Hom(-,-)$ is locally constant in flat families and the connected component $\BM_{\gamma}$ is defined over $k$ by virtue of assumption that $\Cc_R/R$ is $\gamma$-adapted (see Definition \ref{assumption:i}), we obtain $X_{i},X_{\sigma(i)} \in \BM_{\gamma}$ and thus
$\dim \Ext^1(X_i,X_j) = \dim \Ext^1(X_{\sigma(i)},X_j)$. 

Likewise, we obtain that $\dim Hom(X_i,X_i) - \dim Ext^1(X_i,X_i) + \dim Ext^2(X_i,X_i) =2-\dim \Ext^1(X_i,X_i) = -\dim \Ext^1(X_i,X_{\sigma(i)})$, whenever $\sigma(i) \neq i$. Here, we used that $X_i$ is simple and therefore only has scalar endomorphisms, implying that $\dim Hom(X_i,X_i) = 1$. By the CY2 property, the same conclusion holds for $\dim Ext^2(X_i,X_i)$. 
This concludes the proof that the twisted quiver obtained from $X \otimes_R k$ is $\sigma$-impartial.
\end{proof}

\begin{proposition}[Linearisation for CY2 moduli]\label{prop:K3-linear}
Let $\Cc_R /R$ be a $\gamma$-adapted spreading-out of a complex-linear CY2 category $\Cc_{\Cb}$. Then, there exists a finite type morphism $S' = \Spec R' \to S = \Spec R$ with $R \subset R' \subset \Cb$ such that for every closed point $x \in \M_{\gamma,S'}$ there exists a $\sigma$-impartial linear twisted quiver datum $(Q_x,d_x,\tau_x)$ such that we have the following commutative diagram of equivalences for formal stacks and formal schemes:
\[
\xymatrix{
(\widehat{\BM}_{\gamma})_x \ar[r]^-{\simeq} \ar[d] & (\widehat{\BN}_{DQ_x,d_x}^{\tau_x})_0 \ar[d] \\
(\widehat{\M}_\gamma)_x \ar[r]^-{\simeq} & (\widehat{\N}_{DQ_x,d_x}^{\tau_x})_0 
}
\]
\end{proposition}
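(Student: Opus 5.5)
The plan is to establish the linearisation over the complex numbers first, spread it out, and then descend it to the finite residue fields using the symplectic descent machinery of Section \ref{linmod}.

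Over $\Cb$, the local structure theorem for moduli of objects in 2CY categories (Davison's neighbourhood theorem \cite{Da24}, which rests on the Alper--Hall--Rydh étale slice theorem and the $0$-shifted symplectic structure) gives an étale cover $\{U_i \to \M_{\gamma,\Cb}\}$. It also gives triples $(G_i,\rho_i,\widetilde{G}_i)$ with $G_i=\prod \GL_{d}$ and $T^*V_i$ an Ext-quiver representation space, so that diagrams as in Corollary \ref{cor:symp-lin} commute over each $U_i$. Since $\M_\gamma$ is of finite type, finitely many such charts suffice. Each chart, together with the étale maps and the identifications of stacks, is defined over a finitely generated subring of $\Cb$. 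Enlarging $R$ to such an $R'$ therefore makes the hypotheses of Corollary \ref{cor:symp-lin} hold over $S'$, and we may further invert all primes $p \le \max_x \dim \Ext^1(\bar x,\bar x)$. This maximum is bounded because $\M_\gamma$ is of finite type.

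Next we fix a closed point $x\in \M_{\gamma,S'}$ with residue field $k$. By Lemma \ref{lemma:well-pointed}, $(\Mc_\gamma,x)$ is well-pointed. The bound $p \ge \dim \mathcal{H}^0(\BL_{x})$ holds by the choice of $S'$. Corollary \ref{cor:symp-lin} then gives a formal symplectic linearisation $(\widehat{\BM}_\gamma)_x \simeq [\widehat{\mu^{-1}(0)}/G]$ with $G=\Aut(x)$ defined over $k$ (or over the Witt-vector lift), where $T^*V=\Ext^1(x,x)$. The same corollary's last assertion lifts this to the $\Gb_m$-gerbe $\BM_\gamma\to\Mc_\gamma$.

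We now identify this quotient with a twisted Nakajima quiver stack. Write $x\otimes \bar k=\bigoplus_i X_i^{d_i}$. The automorphism group is then $G=\mathrm{Res}$-type form of $\prod_i \GL_{d_i}$, twisted by the Frobenius permutation of the $X_i$. The symplectic $G$-representation $\Ext^1(x,x)$, with its $I^2$-grading and Galois descent datum, is exactly $\Ab^{\tau}_{DQ_x,d_x}$ in the sense of Construction \ref{const:twisted-rep-stack}, where $(DQ_x,\tau_x)$ is the twisted Ext-quiver produced by the Construction in Subsection \ref{2cylin}. The CY2 pairing furnishes the symplectic form, and the momentum map is the one defining $\BN^{\tau}_{DQ_x,d_x}$. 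Lemma \ref{lemma:sigma-impartial} supplies $\sigma$-impartiality, using $\gamma$-adaptedness. Passing to adequate moduli spaces, which commutes with formal completion along a closed point by the universal property and base change for adequate moduli spaces, yields the bottom row of the diagram compatibly with the top.

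The main obstacle is the first step. We must ensure that the complex étale-local quiver charts spread out uniformly over $S'$ and remain valid at every closed point of positive characteristic. The concern is points whose polystable type appears only in characteristic $p$, or whose simple summands split further. Rather than spreading charts, I would first try to argue pointwise. The non-emptiness of $\Sigma_{\Xc,\bar x}$ after étale base change can be obtained directly from the formality of the $A_\infty$/dg structure on $R\Hom(x,x)$ for semisimple $x$ in a 2CY category. One then uses deformation theory to show that $\widehat{\BM}_x$ is the completed quotient of the Maurer--Cartan locus, which in characteristic $p$ is large compared with $\dim\Ext^1$. This reduces everything to the descent input of Proposition \ref{prop:symp-linear}.
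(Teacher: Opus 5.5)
Your proposal follows the paper's proof: Davison's neighbourhood theorem over $\Cb$, spreading out finitely many charts to a finitely generated $R'\subset\Cb$, and applying Corollary~\ref{cor:symp-lin} at each closed point. Your inversion of small primes, use of Lemma~\ref{lemma:well-pointed}, identification with the twisted Ext-quiver and appeal to Lemma~\ref{lemma:sigma-impartial} make explicit what the paper leaves implicit.

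The worry in your last paragraph is unfounded, and you should not switch to a pointwise formality argument, which would need formality of $R\Hom(x,x)$ in positive characteristic and is not available here. After shrinking $S'$, Chevalley constructibility keeps the spread-out charts \'etale and jointly surjective onto $\M_{\gamma,S'}$. Corollary~\ref{cor:symp-lin} already handles closed points whose stabiliser is smaller than the chart group, or whose simple summands are only defined over an extension $k'/k$.
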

\begin{proof}
According to Davison's Neighbourhood Theorem \cite[Theorem 5.11]{Da24} there exists a finite cover in the \'etale topology
$$\mathcal{U}=\{U_i \to \M_{\gamma,\Cb}\}$$
of the complex moduli space $\Mc_{\gamma,\Cb}/\Cb$, and pairs $(Q_i,d_i)$ consisting of a quiver and a dimension vector such that we a commutative diagram 
\[
\xymatrix{
(\BM_{\gamma,\Cb}) \times_{\M_{\gamma,\Cb}} U_i \ar[r]^-{\text{\'et}} \ar[d] & \BN_{DQ_i,d_i} \ar[d] \\
U_i \ar[r]^-{\text{\'et}} & \N_{DQ_i,d_i}
}
\]
of stacks. 

We now choose a finitely generated extension $R \subset R' \subset \Cb$ such that $\mathcal{U}$ and the commutative diagrams spread out to $R' \subset \Cb$ and denote $\Spec R'$ by $S'$.

This spreading-out then satisfies the assumptions of Corollary \ref{cor:symp-lin} by virtue of construction, since $\BN_{DQ_i,d_i}$ is defined by symplectic reduction. We infer that $\BM_{R'}$ is formally symplectically linearisable near every closed point $x \in |M_{\gamma,S'}|$.
\end{proof}
\begin{corollary}[Addendum to the linearisation]\label{indlin}
Let $\Cc_R/R$ and $\gamma$ be as in Proposition \ref{prop:K3-linear}. The commutative diagram of equivalences for formal stacks can be extended for any decomposition $x = x_1 \oplus x_2$ in the following way:
\[
\xymatrix{
 (\widehat{\BM}_{\gamma_1})_{x_1} \times (\widehat{\BM}_{\gamma_2})_{x_2} \ar[r]^-{\simeq} \ar[d]_{\oplus} & (\widehat{\BN}^{\tau_1}_{DQ_x,d_{x_1}})_{0} \times (\widehat{\BN}^{\tau_2}_{DQ_x,d_{x_2}})_0\ar[d]^{\oplus} \\
(\widehat{\BM}_{\gamma})_x \ar[r]^-{\simeq} \ar[d] & (\widehat{\BN}_{DQ_x,d_x}^{\tau_x})_0 \ar[d] \\
(\widehat{\M}_\gamma)_x \ar[r]^-{\simeq} & (\widehat{\N}_{DQ_x,d_x}^{\tau_x})_0 
}
\]
\end{corollary}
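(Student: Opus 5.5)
The plan is to deduce the addendum from the construction in Proposition \ref{prop:K3-linear}, and ultimately from Davison's Neighbourhood Theorem, using that the linearisation is built from the Ext-quiver of the semisimple object $x$. If $x = x_1 \oplus x_2$ with $x = \bigoplus_{i\in I} X_i^{d_i}$, then $x_1$ and $x_2$ are semisimple with dimension vectors $d_{x_1}, d_{x_2}$ satisfying $d_{x_1}+d_{x_2}=d_x$. Their Ext-quivers are full subquivers of $DQ_x$, supported on the vertices with $d_{x_j,i}\neq 0$. Twisted by the restriction of $\tau_x$, they give the data $(DQ_x,d_{x_j},\tau_j)$. The $\gamma$-adaptedness of $\Cc_R$ guarantees that $\gamma_1,\gamma_2$ are defined over $S$, so $x_1,x_2$ are closed points of $\M_{\gamma_1,S'}$ and $\M_{\gamma_2,S'}$. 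The Frobenius action permutes vertices compatibly with the decomposition, since $x_1,x_2$ are defined over $k$. Hence $\tau_x$ restricts to twists $\tau_1,\tau_2$.

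The first key step is to check the square over $\Cb$ and then spread out. Davison's local neighbourhood theorem is compatible with direct sums: the étale chart $\BM_\gamma\times_{\M_\gamma}U\to \BN_{DQ,d}$ at a polystable point is modelled on the derived deformation theory of $x$, namely the $A_\infty$/Ext-algebra of $x$. The direct sum map $\BM_{\gamma_1}\times\BM_{\gamma_2}\to\BM_\gamma$ corresponds to the inclusion of the Levi $G_{d_{x_1}}\times G_{d_{x_2}}\subset G_{d_x}$ together with the block-diagonal embedding of $\mu^{-1}(0)$. Formally at $x$, this is the statement that the formal moduli problem of $x$ restricted to block-diagonal deformations is the product of those of $x_1$ and $x_2$. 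I would phrase this via the $(\Gb_m)^2$-fixed loci for the cocharacter splitting $x_1\oplus x_2$. This cocharacter acts on $\widehat{\BM}_{\gamma,x}$ and on $\widehat{\BN}_{DQ_x,d_x,0}$, and its fixed locus recovers the products. Any $G$-equivariant linearisation is compatible with this torus inside the stabiliser, so it restricts to an isomorphism of fixed loci, giving the top square.

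The second step is to handle the descent over $R'$. In Corollary \ref{cor:symp-lin} the linearisation at $x$ was only shown to exist after étale base change and was then descended using Proposition \ref{prop:symp-linear}. For the addendum we must descend the three linearisations compatibly. I would consider the sheaf of triples consisting of a linearisation at $x$ whose restriction to the fixed locus of the splitting torus is a product of linearisations at $x_1$ and $x_2$. The relevant automorphism group is the subgroup of $\Sb$ commuting with the torus. As in Lemma \ref{lemma:split-surjection}, it is an extension of $Z_{\Sp}(G\times T)$ by a pro-unipotent group. Here $G\times T$ is reductive, with $T$ the splitting torus, and the representation still has no type-$1$ summands because it is a restriction of a type-$2$ representation. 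So Theorem \ref{CentralizerThm} and Lemma \ref{lemma:Z-smooth} give triviality of $H^1$. The bottom square is the adequate moduli space of the middle one, as in Proposition \ref{prop:K3-linear}.

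I expect the main obstacle to be this compatible descent. One must make sure that the linearisation chosen in Proposition \ref{prop:K3-linear} can be arranged to commute with the splitting cocharacter. The stabiliser $\Aut(x)$ contains this torus only after identifying $x\cong x_1\oplus x_2$, which is why we also fix an isomorphism between the base points. One must also check that restricting a type-$2$ symplectic representation to $G\times T$ keeps the no-type-$1$ hypothesis needed for the Knop-type conjugacy.
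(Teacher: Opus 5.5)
Your key step is essentially the paper's argument. The paper applies the functor $I_{\mu_2}(-)=\Map(B\mu_2,-)$ to the equivalence $(\widehat{\BM}_\gamma)_x\simeq(\widehat{\BN}^{\tau_x}_{DQ_x,d_x})_0$ and uses that, for stacks of objects in a category, this mapping stack is $\BM\times\BM\xrightarrow{\oplus}\BM$; you do the same with graded points $\Map(B\Gb_m,-)$ for the splitting cocharacter.

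This is pure functoriality applied to the linearisation already descended in Proposition \ref{prop:K3-linear}, so the top square exists over the same base with no further choices. Hence your detour through $\Cb$, your second ``compatible descent'' step, and the ``main obstacle'' you anticipate are all unnecessary. Moreover, $Z_{\Sp}(G\times T)=Z_{\Sp}(G)$ anyway, since the splitting torus $T$ already lies in $G=\Aut(x)$.
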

\begin{proof}
This follows directly from the observation that for a moduli stack of objects $\BM$ in a category $\Cc$, we have a commutative diagram
\begin{equation}\label{diag:oplus}
\xymatrix{
\BM \times \BM \ar[r] \ar[d]_-{\oplus} & I_{\mu_2}\BM \ar[ld] \\
\BM,
}
\end{equation}
where $I_{\mu_2}\BM$ denotes the mapping stack $\Map(B\mu_2,\BM)$.

Applying the functor $I_{\mu_2}(-)$ to the equivalence $(\widehat{\BM})_x \simeq (\widehat{\BN}^{\tau_x}_{DQ_x,d_x})_0$ we obtain a commuting square
\[
\xymatrix{
I_{\mu_2}(\widehat{\BM})_x \ar[r]^-{\simeq} \ar[d] & I_{\mu_2}(\widehat{\BN}^{\tau_x}_{DQ_x,d_x})_0 \ar[d] \\
(\widehat{\BM})_x \ar[r]^-{\simeq} & (\widehat{\BN}^{\tau_x}_{DQ_x,d_x})_0
}
\]
Since both stacks are moduli stacks of objects in a category, the top row of this diagram is naturally isomorphic to $(\widehat{\BM}_{\gamma_1})_{x_1} \times (\widehat{\BM}_{\gamma_2})_{x_2} \simeq (\widehat{\BN}^{\tau_1}_{DQ_x,d_{x_1}})_{0} \times (\widehat{\BN}^{\tau_2}_{DQ_x,d_{x_2}})_0$. This concludes the proof.
\end{proof}

\subsection{BPS sheaves for 2-Calabi-Yau categories}

Over $\BC$ an explicit description of the BPS sheaf of a 2-Calabi-Yau category is given in \cite[Definition 7.15]{DHS23} as a perverse sheaf $\BPS_{\M}$, in fact even a mixed Hodge module, on the good moduli space $\M = \M_{\Cc}$ of a 2-Calabi-Yau category $\Cc$. It is a universal construction, the half Kac–Moody Lie algebra object, on an explicit object $\CG_{\Cc}$. While the construction in \textit{loc. cit.} is carried out in the derived category of mixed Hodge modules, it carries over without any changes to the derived category of $\ell$-adic sheaves. In particular the sheaf $\CG_{\Cc}$ is defined as follows: Let $\Sigma_{\Cc}$ and $\Phi^+_{\Cc} \subset \pi_0(\BM)$ denote the set of primitive positive and simple positive roots of $(\pi_0(\BM), (\cdot,\cdot))$ \cite[Section 3.1]{DHS23}. Then $\CG_{\Cc} = \bigoplus_{\gamma \in \Phi^+_{\Cc}} \CG_{\Cc,\gamma}$ where

\[  \CG_{\Cc,\gamma} = \begin{cases} \IC_{\M_\gamma} \text{ if } \gamma \in \Sigma_{\Cc} \\ (u_\gamma)_*\IC_{\M_{\Cc,\gamma}} \text{ if } \gamma=l\gamma' \text{ with } \gamma' \in \Sigma_{\Cc} \text{ isotropic and } l\geq 2,
\end{cases} \]
and $u_\gamma : \M_{\Cc,\gamma'} \to \M_{\Cc,\gamma}, x \mapsto x^{\oplus l}$. 

The perverse sheaf $\BPS_{\M}$ is then the quotient of the free Lie algebra generated on $\CG_{\Cc}$ by a subsheaf, the Serre ideal. Both are defined with respect to the the symmetric monoidal structure $ \CF_1 \boxdot \CF_2 = \oplus_*(\CF_1 \boxtimes \CF_2)$ on $\mathrm{Perv}(\M)$.

To extend the definition to a 2-Calabi-Yau family $\BM = \BM_{\Cc}$ over $S$ as in Section \ref{2cylin} we consider the relative perverse $t$-structure on $D_c(\M,\BQ_{\ell})$ of \cite{HS23} and copy the definition of $\CG_\Cc$ replacing $\IC_{\M_{\Cc,\gamma}}$ with the intermediate extension in this relative perverse $t$-structure. If $(\pi_0(\BM), (\cdot,\cdot))$ is constant as a group scheme over $S$ we may then directly apply the half Kac–Moody Lie algebra construction to $\CG_\Cc$ to obtain a relative perverse BPS-sheaf $\BPS_{\M}$.

If $(\pi_0(\BM), (\cdot,\cdot))$ becomes constant after a finite étale base change $S' \to S$, the universality of the the half Kac–Moody Lie algebra construction implies that $\BPS_{\M\times_S S'}$ descends to a relative perverse sheaf $\BPS_{\M}$ on $\M$.


\begin{lemma}[Locality principle]\label{lemma:loc-princ}
Let $(X_1,x_1)$, $(X_2,x_2)$ be two adequate moduli spaces $X_1$ and $X_2$ of algebraic stacks $\Xb_1$ and $\Xb_2$ over $k=\mathbb{F}_q$ arising from a 2-Calabi-Yau category. Assume that there exists an equivalence of $k$-stacks $(\Xb_1)_{\widehat{X}_1} \simeq (\Xb_2)_{\widehat{X}_2}$. 
Then, there is an equality
$
\Tr(\varphi_{x_1}|\ {(\BPS_{X_1})_{x_1}}) = \Tr(\varphi_{x_2}|\  {(\BPS_{X_2})_{x_2}}).
$
\end{lemma}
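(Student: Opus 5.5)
The plan is to show that the stalk of $\BPS_{X}$ at $x$, together with its Frobenius action, is determined by the formal neighbourhood $(\Xb)_{\widehat{X}_x}$. The trace equality then follows from the given equivalence. The point is that every ingredient in the construction of $\BPS_X$ recalled above is compatible with restriction to étale, and then formal, neighbourhoods of $x$. The ingredients are the summands $\CG_{\Cc,\gamma}$, the free Lie algebra on $\CG_\Cc$ with respect to $\boxdot$, and the Serre ideal.

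The first step is to localise. Restriction along $\widehat{X}_x = \Spec \widehat{\Oc}_{X,x} \to X$ commutes with taking stalks at $x$, with its Frobenius action, since $x$ is a $k$-point. We must also check that it commutes with the operations used to build $\BPS_X$. Intermediate extensions are compatible with étale, hence pro-étale or formal, base change, because they are defined by the perverse truncations. The direct sum map $\oplus\colon X_{\gamma_1}\times X_{\gamma_2}\to X_\gamma$ is finite, so $\oplus_*$ satisfies proper base change. Its restriction to the formal neighbourhood of $x$ only involves the finitely many pairs $(x_1,x_2)$ with $x_1\oplus x_2 = x$, and the formal neighbourhoods of these points. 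The morphisms $u_\gamma$ are handled in the same way. Hence the stalk $(\BPS_X)_x$ is computed by the half Kac--Moody construction applied to the restrictions of $\CG_\Cc$ to the formal neighbourhoods $\widehat{X}_{x'}$ of the summands $x'$ of $x$.

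The second step is to check that these local data are intrinsic to the stack $(\Xb)_{\widehat{X}_x}$. Three things are needed. (a) The formal neighbourhoods of summands $x_1,x_2$ and the map $\oplus$ between them are recovered from the stack, via the inertia description $I_{\mu_2}$ used in the proof of Corollary \ref{indlin}. (b) The relevant classes $\gamma'$ and their Euler pairings are recovered from the stabiliser data. (c) $\IC$ sheaves restrict to $\IC$ sheaves of the formal neighbourhoods. For (b), I would use that the Euler form $(\gamma_1,\gamma_2)$ equals $\dim\Hom-\dim\Ext^1+\dim\Ext^2$. By the 2CY pairing this is $2\dim\Hom - \dim \Ext^1$, which is read off from the automorphism groups and the tangent complexes of the stack at the relevant points. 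Thus the decision of which summands are primitive, isotropic or simple roots, and the Serre relations, which depend only on these pairings, are local.

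The main obstacle is (a): the equivalence is only given over the formal neighbourhoods, while the direct sum structure is global. Here I would argue as in Corollary \ref{indlin}, obtaining the decomposition $x=x_1\oplus x_2$ as $\mu_2$-fixed structures (maps $B\mu_2 \to \Xb$ near $x$). Any equivalence of stacks, in particular $(\Xb_1)_{\widehat X_1}\simeq(\Xb_2)_{\widehat X_2}$, preserves these. This yields compatible identifications of the local pieces together with their maps $\oplus$, $u_\gamma$ and the associated pairings. The construction then produces isomorphic Frobenius-equivariant stalks $(\BPS_{X_1})_{x_1}\cong(\BPS_{X_2})_{x_2}$, and the traces agree.
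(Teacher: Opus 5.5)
\paragraph{Verdict.} There is a genuine gap: the step from the given formal equivalence to an \'etale-local identification of $\IC$ sheaves is asserted on incorrect grounds. Your skeleton otherwise agrees with the paper's. You identify the stalks of the generators $\CG_\Cc$ at $x$ and at its direct summands, use the $I_{\mu_2}$-description from Corollary \ref{indlin} to see that the equivalence respects $\oplus$, and conclude because stalks commute with the half Kac--Moody construction.

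\paragraph{Where the gap is.} The problem sits in your first step and in (c). You assert that intermediate extensions are compatible with ``\'etale, hence pro-\'etale or formal, base change'', so that $\IC$ sheaves restrict to intrinsically defined $\IC$ sheaves on $\Spec \widehat{\Oc}_{X,x}$. The ``hence'' does not hold. Only the Henselization $\Oc^h_{X,x}$ is a filtered limit of \'etale neighbourhoods. The completion is not, not even in the weakly \'etale sense: already $k[t]_{(t)}\to k[[t]]$ fails, since $k((t))$ is transcendental over $k(t)$. Also, $\Spec\widehat{\Oc}_{X,x}$ is not of finite type, so you have not said which perverse $t$-structure the ``$\IC$ of the formal neighbourhood'' refers to. Without this, the abstract isomorphism $\widehat{\Oc}_{X_1,x_1}\cong\widehat{\Oc}_{X_2,x_2}$ coming from your equivalence gives no reason for $(\IC_{X_1})_{x_1}\cong(\IC_{X_2})_{x_2}$ Frobenius-equivariantly, because the formal isomorphism need not come from any \'etale-local one. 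Upgrading formal data to \'etale-local data is exactly the content of the paper's proof. Your point (a) is not the main obstacle.

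\paragraph{The missing ingredient.} The paper supplies Artin approximation.
\begin{enumerate}
\item $X_i$ is excellent, so $R_i=\Oc^h_{X_i,x_i}\to\widehat R_i$ is regular.
\item By Popescu, $\widehat R_i=\colim_j A_{ij}$ with each $A_{ij}$ smooth over $R_i$.
\item The equivalence involves only stacks of finite type, so it is already defined over some $A_{ij}$.
\item Since $R_i$ is Henselian, the map $A_{ij}\to k$ lifts to a section $A_{ij}\to R_i$.
\item This yields $R_1\cong R_2$ together with $(\Xb_1)_{R_1}\simeq(\Xb_2)_{R_2}$. That equivalence spreads out to common \'etale neighbourhoods, where compatibility of $\IC$ with \'etale pullback is standard.
\end{enumerate}
Alternatively, you could prove a regular base change statement for $j_{!*}$, using Popescu plus smooth base change and a perverse $t$-structure with a dimension function on excellent schemes. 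Either way, you must apply the argument at every summand $x'$ of $x$ as well, since your identification of the $\CG_\Cc$-stalks at the summands uses the same formal-to-local step.

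\paragraph{Minor point on (b).} You have $\Ext^2(X,Y)\cong\Hom(Y,X)^\vee$. So for non-isomorphic summands the Euler form is $\dim\Hom(X,Y)+\dim\Hom(Y,X)-\dim\Ext^1(X,Y)$, not $2\dim\Hom-\dim\Ext^1$.
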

\begin{proof}
 Let us denote by $R_i$ the Henselian local ring attached to $(X_i,x_i)$. The map $R_i \to \widehat{R}_i$ is a regular ring homomorphism and therefore, by Popescu's Theorem, admits a presentation as a colimit of smooth $R_i$-algebras $(A_{i,j})_{j \in J_i}$:
$$\widehat{R}_i = \colim_{j \in J_i} A_{ij}.$$
We remark that each $A_{ij}$ comes with a canonical morphism to $k$, given by the composition $A_{ij} \to \widehat{R}_i \to k$.
 
The assumed equivalence of stacks $(\Xb_1)_{\widehat{X}_1} \simeq (\Xb_2)_{\widehat{X}_2}$ yields an isomorphism of completed rings $\widehat{R}_1 \simeq \widehat{R}_2$, and since the stacks are locally of finite type, this equivalence of stacks can be spread-out to a morphism
$$(\Xb_1)_{A_{1j_1}} \to (\Xb_2)_{A_{2j_2}}$$ 
for appropriately chosen indices $j_i \in J_i$.

Now, we use that $A_{ij_i}$ is a smooth $R_i$-algebra, and that $R_i$ is Henselian, which allows us to lift the canonical homomorphism $A_{ij_i} \to k$ to a homomorphism $A_{ij_i} \to R_i$. Base changing yields morphisms of stacks
$$(\Xb_1)_{R_1}\to \Xb_{A_{2j_2}} \to (\Xb_2)_{R_2}.$$
Proceding in the same manner for the inverse morphism, we obtain
$$(\Xb_2)_{R_2} \to (\Xb_1)_{R_1},$$
and both compositions, are by construction, mutually inverse after passing to $\widehat{R}_i$. Thus, we obtain an equivalence of stacks
\[(\Xb_1)_{R_1}\to (\Xb_2)_{R_2}\]
and an isomorphism of Henselian local rings $R_1 \simeq R_2$.

This isomorphism induces in particular an isomorphism of stalks $(\IC_{X_1})_{x_1} \cong (\IC_{X_2})_{x_2}$. Furthermore by Corollary \ref{indlin}, for any decomposition $x_1 = x_1' \oplus x_1''$ there is a corresponding decomposition $x_2 = x_2' \oplus x_2''$ and the equivalence of $k$-stacks $(\Xb_1)_{\widehat{X}_1} \simeq (\Xb_2)_{\widehat{X}_2}$ induces equivalences of formal completions at $x_1'$, $x_2'$ and $x_1''$, $x_2''$ respectively, compatible with $\oplus$. It follows that we have identifications of stalks of generators $(\CG_{\Cc_1})_{\tilde{x}_1} \cong (\CG_{\Cc_2})_{\tilde{x}_2}$ whenever $\tilde{x}_1$, $\tilde{x}_2$ are corresponding direct summands of $x_1$, $x_2$. Since talking stalks commutes with the half Kac–Moody Lie algebra construction we finally have $(\BPS_{X_1})_{x_1} \cong (\BPS_{X_2})_{x_2}$.\end{proof}

We now consider two specific cases of 2-Calabi-Yau categories which will be relevant in the study of the case of sheaves on a K3 surface.

First, let $Q$ be a $\sigma$-impartial quiver for $\sigma \in \Aut(Q)$ a permutation of the vertices. For any finite field $k \cong \BF_q$ we continue to write $\tau \in H^1(k,\Aut(Q)$ for the class of the cocycle defined by $\sigma \in \Aut(Q)$. We will need the following wall-crossing result. 

\begin{proposition}\label{wc}
Let $\theta \in \Stab_Q$ be a stability condition and  $d \in \BN^{Q_0}$ a dimension vector, both invariant under $\sigma \in \Aut(Q)$. Then there exist constants $p_0$ and $r_0$ such that over every finite field $k \cong \BF_{p^r}$ with $p\geq p_0$ and $r \geq r_0$ we have

\[(p_\theta)_* \BPS_{\N^{\theta,\tau}_{DQ,d} } \cong \BPS_{\N^{\tau}_{DQ,d} },\]
where $p_\theta: \N^{\theta,\tau}_{DQ,d} \to \N^{\tau}_{DQ,d}$ is the natural map to the affine GIT quotient. 
\end{proposition}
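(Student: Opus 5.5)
Over $\Cb$, and for untwisted quivers, the statement $(p_\theta)_*\BPS_{\N^{\theta}_{DQ,d}} \cong \BPS_{\N_{DQ,d}}$ is the wall-crossing (or support) theorem for BPS sheaves of preprojective algebras due to Davison and collaborators. The plan is to transport this complex statement to large positive characteristic by spreading out. The twist $\tau$ is then handled by étale descent, using that both sides are built functorially from the quiver data.

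\textbf{Step 1: the untwisted complex statement.} First I would record the $\Cb$-version for the untwisted Nakajima quiver stack $\BN_{DQ,d}$ and its $\theta$-semistable locus. Both sides are obtained from the half Kac--Moody construction of \cite[Definition 7.15]{DHS23}, applied to the 2CY categories $\Rep(\Pi^0_Q)$ and $\Rep^{\theta\text{-ss}}(\Pi^0_Q)$. The map $p_\theta$ is proper and compatible with direct sums, so $(p_\theta)_*$ is symmetric monoidal for $\boxdot$. On generators, $(p_\theta)_*\IC_{\N^\theta_{DQ,\gamma}}$ is identified with $\IC_{\N_{DQ,\gamma}}$ for the relevant roots, or with $0$ when $\gamma$ is not a $\theta$-root. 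Hence $(p_\theta)_*$ carries the free Lie algebra and its Serre ideal to the corresponding objects on $\N_{DQ,d}$, which gives the isomorphism.

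\textbf{Step 2: spreading out to characteristic $p$.} Next I would spread out over a finitely generated $\Zb$-algebra $R$ containing the needed roots of unity. Over a dense open of $\Spec R$ I would arrange two things: $p_\theta$ and the generating complexes $(u_\gamma)_*\IC$ are compatible with base change, and the relative intermediate extensions of \cite{HS23} commute with restriction to fibres. Then the isomorphism of Step 1, which is a morphism in $D_c(\N_{DQ,d},\Qb_\ell)$ built from finitely many constructible objects and maps, extends over this open by generic base change. The bound $p\ge p_0$ comes from the finitely many primes removed. Only finitely many decompositions $d=\sum d_i$ and dimension vectors $\le d$ are involved, so finitely many opens suffice. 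The comparison between the $\Cb$-fibre and the $\bar\Fb_p$-fibre uses the standard specialization isomorphism for perverse sheaves that are universally locally acyclic over the base.

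\textbf{Step 3: the twist.} The twisted variety $\N^{\tau}_{DQ,d}$ is an étale form of $\N_{DQ,d}$ over $k$, split over the degree-$\mathrm{ord}(\sigma)$ extension. Both $\BPS$ sheaves are defined by a universal construction and descend along finite étale base change, as explained before Lemma \ref{lemma:loc-princ}. The isomorphism is canonical, hence $\Aut(Q)$-equivariant, so it descends to the twisted forms. The condition $r\ge r_0$ makes the relevant roots and vertex orbits behave uniformly, for example by requiring that $k$ contains the constants of $R$ so that the point $\Spec k\to\Spec R$ exists. The $\sigma$-invariance of $\theta$ and $d$ ensures $p_\theta$ is defined over $k$.

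\textbf{Main obstacle.} The hardest step is Step 2: controlling the relative perverse $t$-structure and the intermediate extensions uniformly in $p$. Without that control, the characteristic-$p$ BPS sheaf may not equal the specialization of the complex one. I would first try to establish universal local acyclicity of $\IC_{\N^\theta}$ and $\IC_{\N}$ over a dense open of $\Spec R$ via Deligne's generic base change theorem, and then argue that the Lie-algebraic construction commutes with specialization.
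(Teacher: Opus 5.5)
Your outline matches the paper's: import the complex wall-crossing, spread out to $p\gg0$, then descend along $\tau$. However, two of your steps fail as written.

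\textbf{Step 1.} The generator-by-generator identification is false, so the $\Cb$-statement cannot be obtained from symmetric monoidality of $(p_\theta)_*$. Take the Kronecker quiver with $d=\delta=(1,1)$, $\theta$ generic and $\sigma$ trivial. Simple representations are dense, and $p_\theta$ is the minimal resolution $T^*\mathbb{P}^1\to\Cb^2/\{\pm1\}$. The decomposition theorem then gives $(p_\theta)_*\IC_{\N^\theta_{DQ,\delta}}\cong\IC_{\N_{DQ,\delta}}\oplus(\text{skyscraper at }0)$, where the extra summand comes from $H^2(\mathbb{P}^1)$. The generator sets also differ. The vertex simples $e_1,e_2$ are generators for $\Rep(\Pi_Q)$, but they have a different $\theta$-slope and so do not occur in the $\theta$-semistable category; nothing on the $\theta$-side pushes forward to them. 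The BPS-level identity holds here only because that skyscraper matches the summand $[\CG_{e_1},\CG_{e_2}]$ of $\BPS_{\N_{DQ,\delta}}$. Matching lower-dimensional decomposition-theorem summands with brackets of generators of \emph{other} slopes is exactly the content of the wall-crossing theorem \cite[Theorem B]{DM20}. It has to be imported as a black box, as the paper does, not rederived from generators. The same example shows that the paper's opening reduction to $(p_\theta)_*\IC\cong\IC$ cannot be read literally either; what is really used from \cite{DM20} is the BPS-level identity.

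\textbf{Step 3.} The claim ``canonical, hence $\Aut(Q)$-equivariant'' is unsupported. The only canonical construction you give is the faulty one of Step 1. An isomorphism obtained from the complex theorem by spreading out need not be compatible with the Galois descent datum defining $\N^\tau_{DQ,d}$. An isomorphism over $\bar k$, or even over $k'$, neither descends nor determines Frobenius traces, and traces are exactly what Lemma \ref{lemma:key} uses. On $\Spec k$, the constant sheaf and its twist by a nontrivial character of $\Gal(k'/k)$ already illustrate this.

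The paper reverses the order. It first produces a morphism over $k$: the canonical split inclusion $i$ of the full-support summand, which comes from birationality of $p_\theta$ and the decomposition theorem. Being an isomorphism is a geometric property, so it is checked after base change to a $k'$ trivialising $\tau$, where only the untwisted statement transported from $\Cb$ is needed. This also shows your ``main obstacle'' is not the crux, since comparing stalk dimensions with $\Cb$ for $p\gg0$ is standard generic base change.

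Note, though, that at the BPS level this device canonically controls only the full-support summand $\IC_{\N^\tau_{DQ,d}}$. For the lower-dimensional summands, such as the skyscraper above, a $k$-rational, $\tau$-compatible identification still has to be supplied.
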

\begin{proof}
By the definition of BPS-sheaves it suffices to show that $(p_\theta)_*\IC_{\N^{\theta,\tau}_{DQ,d} } \cong \IC_{\N^{\tau}_{DQ,d}}$. By the decomposition theorem and since $p_\theta$ is birational we know that $i: \IC_{\N^{\tau}_{DQ,d}} \hookrightarrow (p_\theta)_*\IC_{\N^{\theta,\tau}_{DQ,d} }$ is canonically a direct summand. Furthermore, after passing to a finite extension $k'/k$ trivializing the cocylce $\tau$ we have that $(\N^{\theta,\tau}_{DQ,d})_{k'} \cong (\N^{\theta}_{DQ,d})_{k'}$ and $(\N^{\tau}_{DQ,d})_{k'} \cong (\N_{DQ,d})_{k'}$ and thus there is an isomorphism $(p_\theta)_*\IC_{(\N^{\theta,\tau}_{DQ,d})_{k'} } \cong \IC_{(\N^{\tau}_{DQ,d})_{k'}}$ for $k$ sufficiently large implied by the wall crossing formula over $\BC$ \cite[Theorem B]{DM20}, see also \cite[Proposition 7.3.8]{BDNKP25}. Therefore $i$ is already an isomorphism over $k$.    
\end{proof}

The second case of interest are Higgs bundles on a curve: Let as before $R$ be a finite type $\BZ$-algebra $R$ and $S= \Spec(R)$. Given a family of smooth projective curves $\Cc$ over $S$ and integers $r\geq 1, d \in \BZ$ we consider the moduli stack $\BM_{r,d}$ parameterizing semi-stable rank $r$ and degree $d$ Higgs bundles on $\Cc$. We write $\M_{r,d}$ for its adequate moduli space. 

For any rational number $\tau$ we consider the stack
\[ \BM_\tau = \bigsqcup_{d/r = \tau} \BM_{r,d},\]
which defines a CY2 category, see for example \cite[Section 6.3]{DHS22}.
For every $r\geq 1$ we write $\FA_r= \oplus_{i=1}^r H^0(\Cc,K_{\Cc}^i)$ for the Hitchin base and $h_{r,d} \colon \M_{r,d} \to \A_r$ for the Hitchin fibration. Then as a consequence of Kinjo-Koseki's degree independence result for Higgs bundles over $\BC$ \cite{KK21} we deduce similar as in \cite[Proposition 6.4]{GWZ24}

\begin{corollary}\label{cih} For any $r\geq 1$ There exists a non-empty open $S' \subset S$ such that for every finite field $k$, every $\phi \colon \Spec(k) \to S'$ and integers $d,d' \in \BZ$ we have an isomorphism
\[  \left((h_{r,d})_*\BPS_{\M_{r,d}} \right)_\phi \cong  \left((h_{r,d'})_*\BPS_{\M_{r,d'}} \right)_\phi, \]
where the subscript $\phi$ means the restriction to $\A_r \times_S \Spec(k)$ along $\phi$. 
    \end{corollary}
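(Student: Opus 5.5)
The plan is to transport the complex statement of Kinjo--Koseki (Theorem \ref{thm:kinjo-koseki}), in its relative form over the Hitchin base, to the finite-field fibres by spreading out and invoking the comparison between complex and $\ell$-adic sheaves, as in \cite[Proposition 6.4]{GWZ24}.

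\textbf{Step 1: the complex relative statement.} Over $\Cb$, Kinjo--Koseki prove more than an isomorphism of total cohomologies. They construct an isomorphism of complexes on the Hitchin base,
\[
(h_{r,d})_*\BPS_{\M_{r,d}} \cong (h_{r,d'})_*\BPS_{\M_{r,d'}},
\]
in $D^b_c(\A_{r,\Cb})$. Both sides are direct sums of shifted semisimple perverse sheaves. This follows from the decomposition theorem, since $h$ is proper and $\BPS$ is pure.

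\textbf{Step 2: reduce to finitely many degrees.} Tensoring by a line bundle of degree $r$ on $\Cc$ gives $\M_{r,d}\cong\M_{r,d+r}$, compatibly with $h$ and with BPS sheaves. This may require a finite étale base change to obtain such a line bundle, which is harmless after shrinking $S$. Hence only the residues $d \bmod r$ matter, and it suffices to treat finitely many pairs $(d,d')$.

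\textbf{Step 3: spread out and compare fibres.} Choose a dense open $S'\subset S$ with the following properties:
\begin{itemize}
\item $\A_r$ and the $\M_{r,d}$ are flat there;
\item the relative perverse BPS sheaves of \cite{HS23} are defined and constructible;
\item all simple constituents of $(h_{r,d})_*\BPS$ spread out to relative perverse sheaves that are universally locally acyclic over $S'$;
\item the formation of the pushforward commutes with base change to geometric points.
\end{itemize}
By generic base change and the comparison theorem, geometric fibres over $\bar{\Fb}_p$ and over $\Cb$ then have identical decompositions: the same supports, local systems and shifts.

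\textbf{Step 4: descend to $k$.} The Kinjo--Koseki isomorphism over $\Cb$ matches the summands. We would spread out these summand identifications as isomorphisms of simple relative perverse sheaves over $S'$, after possibly shrinking $S'$ and passing to a finite étale cover. Restricting along $\phi$ then gives the claimed isomorphism over $\A_r\times_S\Spec(k)$. Over $k$ itself the sheaves are not a priori semisimple, but the same constructions produce them as direct sums of the spread-out simple summands, so the isomorphism holds over $k$ and not only over $\bar k$.

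The main obstacle is the descent in Step 4. One has to ensure that the decomposition summands, and the isomorphism between them, are defined over $S'$ rather than only over geometric points, including compatibility with Frobenius and the Galois twists. I would first try to realise the Kinjo--Koseki isomorphism by an algebraic correspondence, or via the $\chi$-independence of the generators $\CG$ under the Lie algebra construction, which spreads out directly. Failing that, I would argue summand-by-summand using simplicity and Schur's lemma, possibly after passing to a finite étale cover of $S'$.
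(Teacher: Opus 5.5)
Your overall route (spread out the relative Kinjo--Koseki isomorphism on the Hitchin base and specialise) is the one the paper has in mind; the paper only cites \cite{KK21} together with the argument of \cite[Proposition 6.4]{GWZ24}. There is, however, a genuine gap. Your Step~4 is where the content of the corollary lies, you leave it open, and the fallbacks you propose cannot close it.

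Steps 1--3 only give isomorphisms after base change to $\bar k$. The complex isomorphism is defined over some finitely generated $R'\subset\Cb$, and every geometric point of a suitable open $S'$ lifts to $\Spec R'$, but a $k$-point need not. The statement concerns Frobenius, and so does its use in the second case of the proof of Lemma~\ref{lemma:key}, where one needs equality of $\Tr(\varphi_a\,|\,\cdot\,)$ on stalks. A geometric isomorphism says nothing about this.

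Take $P$ geometrically simple over $k$ and $\chi$ a non-trivial character of $\Gal(\bar k/k)$. Then $P\cong P\otimes\chi$ over $\bar k$, and Schur's lemma makes $\Hom_{\bar k}(P,P\otimes\chi)$ one-dimensional. But Frobenius acts on this line by $\chi(\mathrm{Frob})\neq 1$. So there is no isomorphism over $k$, and the traces differ by exactly this factor.

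The same happens in families. Over $S=\Spec\Zb[\tfrac{1}{2\ell}]$, the maps $f_1\colon S\sqcup S\to S$ and $f_2\colon \Spec\Zb[\tfrac{1}{2\ell},i]\to S$ satisfy $f_{1*}\Qb_\ell\cong f_{2*}\Qb_\ell$ over $\Cb$. Yet at every prime $p\equiv 3 \bmod 4$ the Frobenius traces are $2$ and $0$, so no open $S'$ works. Hence ``complex isomorphism, spreading out, Schur'' cannot prove the corollary in general.

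Passing to a finite \'etale cover of $S'$ is also not harmless. Afterwards you only control those $\phi\colon\Spec k\to S'$ that lift to the cover over $k$, not every $\phi$ as the statement demands. The same objection applies to the base change in your Step~2, but there it is easily avoided: over a finite field a degree-one line bundle always exists, by Lang's theorem and $\mathrm{Br}(k)=0$. Note also that it is a degree-one line bundle, not a degree-$r$ one, that gives $\M_{r,d}\cong\M_{r,d+r}$.

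What is missing is a source of Frobenius-equivariance specific to Higgs bundles. The identification between degrees $d$ and $d'$ must be built from data defined over $S'$, so that it is canonical and hence automatically Galois-equivariant. An arbitrary isomorphism chosen over $\Cb$ will not do.

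Geometric information from the complex argument does spread out, since it is a statement about geometric fibres. This includes which loci of $\A_r$ support the simple summands of $(h_{r,d})_*\BPS_{\M_{r,d}}$, and with which shifts. What you must then supply is an arithmetic identification on a dense open of each of these supports. For example, over the locus of smooth spectral curves, $\M_{r,d}$ and $\M_{r,d'}$ are torsors under the same relative Picard scheme. Their cohomology sheaves are therefore identified canonically, hence Frobenius-equivariantly.

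Intermediate extension then carries such identifications to the perverse cohomology sheaves over $k$. That already suffices for the trace identity needed in Lemma~\ref{lemma:key}. Your idea of realising the isomorphism by a correspondence defined over $S$ would also work if one were available. As written, though, the proposal proves only the $\bar k$-version of the corollary.
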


\section{Computing BPS-invariants as a $p$-adic integral}\label{bpsintsect}

\subsection{Recollection on $p$-adic integration}

Let $F$ be a non-archimedean local field, that is a finite extension of $\BQ_p$ of $\BF_p((t))$ for some prime $p$. We denote by $\Oc_F \subset F$ its ring of integers and by $k_F \cong \BF_q$ its residue field. We further fix the Haar measure on $(F,+)$ so that $\Oc_F$ has volume $1$. 

We will freely use the theory of $F$-analytic manifolds, an analogue of the theory of real manifolds, as developed for example in \cite{MR1743467,CNS18}. In particular for an $F$-analytic manifold $M$ of dimension $d$ and an analytic $d$-form $\omega$ on $M$, integration against the absolute value of $\omega$ defines a Borel measure $|\omega|$ on $M$.

For us $F$-analytic manifolds arise naturally from algebraic varieties as follows. Let $X/\Oc_F$ be a reduced, separated finite type scheme of relative dimension $d$ and denote by $X^{sm} \subset X$ the smooth locus of $X$. Then for any non-empty open $U \subset X^{sm}$ the set
\[ X(\Oc_F)^\natural = X(\Oc_F) \cap U(F),  \]
admits the structure of an $F$-analytic manifold, in fact a submanifold of $U(F)$. In particular, any $d$-form $\omega$ on $U$ will induce a measure $|\omega|$ on $X(\Oc_F)^\natural$. 

Reduction modulo the maximal ideal in $\Oc_F$ induces a restriction map $r\colon X(\Oc_F)^\natural \to X(k_F)$ and for any subset $S \subset X(k_F)$ we will often write $X(\Oc_F)^\natural_S$ for $r^{-1}(S)$, the ball of $\Oc_F$-points around $S$.

Finally we will also be interested in computing integrals of functions coming from gerbes: Let $\alpha$ be a $\BG_m$-gerbe on $U$. For any $x\in U(F)$ the class of $x^*\alpha \in H_{\text{\'et}}^2(F,\BG_m)$ can be identified with an element in $\BQ/\BZ$ by means of the isomorphism given by the Hasse invariant $H_{\text{\'et}}^2(F,\BG_m) \cong \BQ/\BZ$. We will write $e^{2\pi i\alpha}$ for the induced map 
\[e^{2\pi i\alpha}\colon X(\Oc_F)^\natural \to \BC \ \ \ \ y \mapsto e^{2\pi i \ (y_{|F})^*\alpha}.\]
Notice that $e^{2\pi i\alpha} \equiv 1$ if $\alpha$ extends to a $\BG_m$-gerbe on all of $X$ since $H_{\text{\'et}}^2(\Oc_F,\BG_m)= 0$.









\subsection{The gerbe-corrected volume computes BPS invariants}\label{sub:vol-BPS}

Let $\BM = \BM_{\Cc}$ over $S$ be a 2-Calabi-Yau family as in Section \ref{2cylin}. Throughout this section we fix a component $\gamma \in \pi_0(\BM)(S)$ such that there exists and open dense $\widetilde{U} \subset \BM_\gamma$, that is a $\BG_m$-gerbe over its image $U \subset \M_\gamma$. 

The 2-Calabi-Yau condition then implies that both $\widetilde{U}$ and $U$ are smooth over $S$ and $U$ admits a non-vanishing, non-degenerate $2$-form $\theta_{U}$. We write $\omega_{U}$ for the induced non-vanishing top-form on $U$. 

For any morphism $\Spec(\Oc_F) \to S$ from the ring of integers of a non-archimedean local field $F$ we simply write $\M_\gamma(\Oo_F)$ for $\M_\gamma \times_S \Spec(\Oc_F)$ and similarly $\M_\gamma(F)$, $U(F)$ etc. In particular we consider the $F$-analytic manifold 
\[\M_\gamma(\Oc_F)^\natural = \M_\gamma(\Oc_F) \cap U(F), \]
together with the measure $|\omega_{\M_\gamma}| = |\omega_U|$ induced by the symplectic form on $U$. Finally we will consider the $\BG_m$-gerbe $\alpha \colon\widetilde{U}\to U$ and its induced function $e^{2\pi i \alpha}$ on $\M_\gamma(\Oc_F)^\natural$. Since the complement of $\M_\gamma(\Oc_F)^\natural$ in $\M_\gamma(\Oc_F)$ has measure $0$ we will drop the superscript $\natural$ for the rest of the article as it will not affect the integrals we consider.

As an immediate corollary of Proposition \ref{prop:K3-linear} we have the following locality principle for gerbe-corrected volumes.

\begin{corollary}\label{ploc} Let $S' \subset S$ be as in Proposition \ref{prop:K3-linear}. Then for any $\Spec(\Oc_F) \to S'$ and any $x \in \M_\gamma(k_F)$ we have
\[\int_{\M_\gamma(\Oc_F)_{x}} e^{2\pi i \alpha} |\omega_{\M_\gamma}| = \int_{\N_{DQ_x,d_x}^{\tau_x}(\Oc_F)_{0}} e^{2\pi i \alpha} |\omega_{\N_{DQ_x,d_x}^{\tau_x}}|,\]
with $\N_{DQ_x,d_x}^{\tau_x}$ the twisted quiver at $x$ as in Proposition \ref{prop:K3-linear}.
\end{corollary}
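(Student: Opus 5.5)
The plan is to deduce the equality of integrals directly from the formal isomorphism of Proposition \ref{prop:K3-linear}, by observing that both sides only depend on the formal (or Henselian) neighbourhood of the respective points together with the symplectic form and the gerbe. First I fix $\Spec(\Oc_F)\to S'$ and $x\in \M_\gamma(k_F)$. Proposition \ref{prop:K3-linear} gives an equivalence $(\widehat{\BM}_\gamma)_x\simeq(\widehat{\BN}^{\tau_x}_{DQ_x,d_x})_0$ over $(\widehat{\M}_\gamma)_x\simeq(\widehat{\N}^{\tau_x}_{DQ_x,d_x})_0$; in the construction through Corollary \ref{cor:symp-lin} it is a \emph{symplectic} linearisation, so the identification of completed local rings $\widehat{\Oc}_{\M_\gamma,x}\simeq \widehat{\Oc}_{\N,0}$ matches the symplectic forms on the stable loci. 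The ball $\M_\gamma(\Oc_F)_x$ is exactly the set of local homomorphisms $\widehat{\Oc}_{\M_\gamma,x}\to \Oc_F$ (continuous for the adic topology, lifting $x$), and similarly for $\N(\Oc_F)_0$. Hence the isomorphism of completed local rings yields a bijection $\Phi\colon \M_\gamma(\Oc_F)_x\to \N^{\tau_x}_{DQ_x,d_x}(\Oc_F)_0$.

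Next I check that $\Phi$ is an isomorphism of $F$-analytic manifolds on the loci where the spaces are smooth (the stable loci), and that $\Phi^*\omega_{\N}=\omega_{\M_\gamma}$ there. For analyticity I would use an algebraisation as in Lemma \ref{lemma:loc-princ}: via Popescu/Artin approximation the formal isomorphism is induced, to arbitrarily high order, by an isomorphism of Henselisations, and then $\Phi$ is given locally by convergent power series; alternatively one argues that a map defined by an isomorphism of complete local rings is given by power series with $\Oc_F$-coefficients, which converge on $\mathfrak{m}_F$-adic points, so it is analytic with analytic inverse. The top forms agree since $\omega$ is the top power of the symplectic 2-form and the linearisation is symplectic; strictly speaking the Pfaffian of forms only needs to agree up to a unit in $\Oc_F^\times$-valued functions for $|\omega|$ to agree, which gives some slack. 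The stable loci correspond since the stable locus is where the stack is a $\BG_m$-gerbe over the space, a property visible on the formal neighbourhood.

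Finally the gerbe functions: by the last assertion of Corollary \ref{cor:symp-lin} (using Lemma \ref{lemma:Br-stack}(c)), the gerbe $\alpha$ (i.e.\ $\BM_\gamma$ over the rigidification $\Mc_\gamma$) corresponds under the linearisation to the gerbe $[\widehat{\mu^{-1}(0)}/\widetilde G]\to[\widehat{\mu^{-1}(0)}/G]$, which is the gerbe defining $\alpha$ on the quiver side; so for each $y\in\M_\gamma(\Oc_F)_x$ with generic point stable, $y_F^*\alpha$ and $\Phi(y)_F^*\alpha$ are pullbacks of the same Brauer class on the formal neighbourhood, hence have equal Hasse invariant, i.e.\ $e^{2\pi i\alpha}\circ\Phi=e^{2\pi i\alpha}$. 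Change of variables then gives the claimed equality. The main obstacle I expect is the second step: rigorously passing from a merely formal isomorphism to an analytic, measure-preserving one on $F$-points (and checking the Brauer class comparison on points of the generic fibre, which aren't points of the formal scheme in the naive sense); I would handle both by first algebraising to an isomorphism of Henselian local schemes as in the proof of Lemma \ref{lemma:loc-princ}, after which everything is a statement about étale-locally isomorphic schemes with compatible forms and gerbes.
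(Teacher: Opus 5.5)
Your proposal is correct and takes the same route as the paper. The paper's proof only notes that the ball $\M_\gamma(\Oc_F)_x$ and the function $e^{2\pi i\alpha}$ depend solely on the completion $(\widehat{\BM}_\gamma)_x\to(\widehat{\M}_\gamma)_x$, and that the equivalence of Proposition \ref{prop:K3-linear} preserves the symplectic forms; your identification of the ball with local homomorphisms out of the completed local ring, the power-series analyticity argument, and the gerbe comparison just make this explicit (the appeal to Lemma \ref{lemma:Br-stack}(c) is not even needed, since that equivalence is already between the unrigidified stacks over the spaces and so carries $\alpha$ to $\alpha$ directly).
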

\begin{proof} Notice that $\M_\gamma(\Oc_F)_{x}$ and the gerbe function $e^{2\pi i \alpha}$ only depend on the completion $(\widehat{\BM}_\gamma)_x \to (\widehat{\M}_\gamma)_x$. Since the equivalence of Proposition \ref{prop:K3-linear} also preserves symplectic forms the corollary follows.     
\end{proof}


We are now ready to state the main result of this section. 

\begin{theorem}\label{thm:key}
There exists an open subscheme $S_\gamma \subset S$  such that for every morphism $\Spec(\Oc_F) \to S$ from the ring of integers of a local field $F$ with residue field $k_F$ of cardinality $q$, and for every $x \in \M_\gamma(k_F)$ we have an equality
\[
\int_{\M_\gamma(\Oc_F)_{x}} e^{2\pi i \alpha} |\omega_{\M_\gamma}| = \frac{\Tr(\varphi_{x}|(\BPS_{\M_\gamma})_{x})}{q^{\dim \M_\gamma}}.
\]
\end{theorem}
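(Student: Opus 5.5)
The plan is to use the two locality principles to reduce to twisted Nakajima quiver varieties, and then to handle those by a global argument that plays quiver varieties off against moduli of Higgs bundles.

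\emph{Step 1 (reduction to local models).} I would choose $S_\gamma \subset S'$, with $S'$ as in Proposition \ref{prop:K3-linear}, and shrink it so that the residue characteristics exceed every bound needed below. Corollary \ref{ploc} identifies the left-hand side with the gerbe-corrected integral over $\N^{\tau_x}_{DQ_x,d_x}(\Oc_F)_0$. Lemma \ref{lemma:loc-princ}, applied to the formal equivalence of Proposition \ref{prop:K3-linear}, identifies the right-hand side with $\Tr(\varphi_0|(\BPS_{\N^{\tau_x}_{DQ_x,d_x}})_0)$. The dimensions agree, so it suffices to prove
\[
\int_{\N^{\tau}_{DQ,d}(\Oc_F)_{0}} e^{2\pi i\alpha}|\omega| = q^{-\dim \N_{DQ,d}}\Tr(\varphi_0|(\BPS_{\N^{\tau}_{DQ,d}})_0)
\]
for every $\sigma$-impartial twisted quiver datum $(Q,d,\tau)$ arising in this way (Lemma \ref{lemma:sigma-impartial}). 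Only finitely many such data occur, since $\M_\gamma$ has finite type and Davison's cover is finite, so one uniform bound on $p$ suffices.

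\emph{Step 2 (induction via the $\Gb_m$-action).} I would induct on $d$, ordered by dominance, and on the number of vertices. The scaling $\Gb_m$-action on $\N^{\tau}_{DQ,d}$ contracts everything to $0$, and the $\Oc_F$-points are stable under multiplication by uniformisers. This decomposes $\N(\Oc_F)$ into the central ball and a union of shells, and the shells have the same structure as the punctured cone. Consequently the global volume
\[
\int_{\N(\Oc_F)} e^{2\pi i\alpha}|\omega|
\]
equals the sum of the local integrals over all $y\in\N(k)$. Away from $0$, a point $y$ is a polystable object with strictly smaller local quiver data, by Corollary \ref{indlin}. Applying the induction hypothesis to all $y\neq 0$ and the Grothendieck--Lefschetz trace formula to $H^*_c(\N,\BPS)$ (using the contracting action, $H^*_c$ localises at $0$ up to a known factor), the local identity at $0$ becomes equivalent to the global identity
\[
\int_{\N(\Oc_F)} e^{2\pi i\alpha}|\omega| = q^{-\dim\N}\sum_i(-1)^i\Tr(\varphi|H^i_c(\N,\BPS)),
\]
that is, a global count. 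Proposition \ref{wc} replaces $\BPS_{\N}$ by $(p_\theta)_*\BPS_{\N^\theta}$ for generic $\sigma$-invariant $\theta$. When $d$ is $\theta$-coprime this reduces to a smooth symplectic variety, where the gerbe is trivial on $\Oc_F$-points and the volume is a point count; this settles that case directly.

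\emph{Step 3 (main obstacle: non-coprime base cases).} The hard case is $d$ not coprime, and in particular the $g$-loop (Jordan) quivers, where no generic stability condition exists; Lemma \ref{lemma:no-twist} shows these are untwisted. Here I would switch to the global Higgs argument. Near a point $\bigoplus E^{\oplus n}$ with $E$ a stable Higgs bundle of degree $0$, the moduli space $\M_{n\cdot r,0}$ of Higgs bundles is locally modelled on the $g$-loop Nakajima variety, with $g=\dim\Ext^1(E,E)/2$. Corollary \ref{cih} (Kinjo--Koseki) identifies $(h_{r,0})_*\BPS$ with $(h_{r,d})_*\BPS$ for $d$ coprime to $r$, where the trace is a point count. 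The $\chi$-independence of $\vol_\alpha$ along the Hitchin fibres (\cite{COW21}, via the Tate-duality argument of \cite{GWZ20a}) gives the same statement for the gerbe-corrected integrals over $\A_r(\Oc_F)$-fibres. Combining these yields the identity fibrewise over the Hitchin base for degree $0$. Subtracting the contributions of all points with smaller local quivers, which are known by induction, isolates the contribution of the $g$-loop local model at $0$; one must arrange, for example by choosing curves of suitable genus, that every $g$ arises. The delicate points are the matching of normalisations of the measure and gerbe between the local model and the Higgs moduli space, and the bookkeeping of twisted points (Frobenius permuting summands). I expect the latter to be handled by impartiality, and this is the step where I expect the argument to be most fragile.
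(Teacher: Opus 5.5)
\paragraph{Gap: the twisted transitive base cases are never established.} Your Step 1 and the fibrewise Higgs argument in Step 3 (Kinjo--Koseki on the BPS side, \cite{COW21} on the integral side, then subtracting known points) match the paper. The gap lies in which local models serve as base cases. The recursion through a $\sigma$-invariant generic stability condition stops exactly when $(\Stab_Q)^\sigma$ is trivial, i.e.\ when Frobenius permutes the vertices of the local quiver \emph{transitively}. This includes the Jordan quivers, but also genuinely twisted models with $|Q_0|\geq 2$. For example, take $x=E\oplus\sigma(E)$ with $E$ stable and defined only over the quadratic extension of $k$. Frobenius swaps the two vertices and the only $\sigma$-invariant stability condition is trivial, so Proposition \ref{wc} gives nothing, and Lemma \ref{lemma:no-twist} does not apply. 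Your Higgs points $\bigoplus E^{\oplus n}$ with $E$ defined over $k$ never have such a local model. Worse, these models appear among the points you want to subtract. In the fibre over $(\lambda-\theta)^2$, the point $(L,\theta)\oplus(\sigma^*L,\theta)$ with $L\in\Pic^0(C)(k_2)\setminus\Pic^0(C)(k)$ has a two-vertex swapped model with $|d|=2$, like the Jordan point itself. So it is not covered by your induction, and your fibre identity then has two unknowns.

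This is not bookkeeping that impartiality absorbs; it is a second family of base cases that needs its own global model. The paper supplies it in Construction \ref{const:spectral-curve}. Choose a $1$-form $\theta$ over the degree-$n$ unramified extension whose Frobenius orbit has size $n$, and take $\bigoplus_\ell (L,\theta)^{\sigma^\ell}$ in the Hitchin fibre over $a_{n,\theta}=\prod_\ell(\lambda-\sigma^\ell\theta)$. Its summands are pairwise non-isomorphic and cyclically permuted by Frobenius, so this point realises the twisted transitive model; impartiality is what lets one match an arbitrary transitive model with such a point. The same fibre argument then applies.

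\paragraph{Secondary points.} The dichotomy you use (coprime versus non-coprime) is not the right one. A non-coprime $d$ on a multi-vertex quiver with a $\sigma$-invariant generic $\theta$ has in general no Higgs model at all, since Higgs Ext-quivers have $-(E_i,E_j)=(2g-2)r_ir_j\geq 2$ off the diagonal. But it needs none. Apply the induction hypothesis at the $k$-points of $\N^{\theta,\tau}_{DQ,d}$ lying over $0$, which have strictly smaller local data, and use $(p_\theta)_*\BPS=\BPS$ from Proposition \ref{wc}; this is what the paper does by integrating over $c^{-1}(x)$. Finally, the contracting $\Gb_m$-action is superfluous. The global integral is the sum of the local ones simply because $\N(\Oc_F)$ is partitioned by reduction to $k$, and the trace side is Grothendieck--Lefschetz. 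Moreover, a contraction localises $H^*$ to the stalk at $0$, not $H^*_c$.
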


We will prove this at the end of this subsection. At first we need to establish the equality for linear models, that is, for twisted quiver varieties. Evaluating gerbe-corrected integrals directly, even in these explicit cases, turns out to be rather challenging. The proof of the quiver case, crucial to the overall strategy, hinges on $\chi$-independence for Higgs moduli \cite{KK21}. Indeed, the proof recursively computes the local gerbe-corrected volumes for a given point in a twist of a quiver variety for $Q$, in a procedure that replaces the pair $(Q,d)$ by $(Q',d')$ with the new quiver $Q'$ being a contraction of the first, and the dimension vector $d'$ having strictly smaller mass: $|d'| < |d|$. This recursion grinds to a halt, when there is no non-trivial stability condition on the twisted quiver variety. This happens if and only if the Galois-action acts transitively on the set of vertices. Fortunately, this type of singularity is always achieved by a suitably chosen Hitchin system. Using that $\chi$-independence holds for Higgs moduli, we may conclude that gerbe-corrected volumes compute traces of Frobenius on $\BPS$-sheaves in these remaining cases.


\begin{lemma}[Key Lemma]\label{lemma:key}
Let $(DQ_{R'},\tau)$ be a $\sigma$-impartial twisted quiver enriched in $R'$-modules. Let $\theta \in \Stab_{DQ}$ a stability condition and  $d \in \BN^{Q_0}$ a dimension vector, both invariant under $\sigma \in \Aut(Q)$. Assume that the locus of simple representations is dense in $\N_{DQ,d}^\tau$. Then there exist constants $p_0$ and $r_0$ such that for every finite field $k \cong \BF_{p^r}$ with $p\geq p_0$ and $r \geq r_0$, any local field $F$ with residue field $k$ and any $x \in \N_{DQ,d}^\tau(k)$ we have

\[
\int_{\N_{DQ,d}^\tau(\Oc_F)_{x}} e^{2\pi i \inv(\alpha)} |\omega_{\N_{DQ,d}^\tau}| = \frac{\Tr(\varphi_{x}|(\BPS_{\N_{DQ,d}^\tau})_{x})}{q^{\dim \N_{DQ,d}^{\tau}}}.
\]
\end{lemma}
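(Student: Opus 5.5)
We argue by strong induction on the mass $|d|$ of the dimension vector, following the recursive strategy sketched before the statement, and we use the $\chi$-independence for Higgs moduli (Corollary \ref{cih}) as the base case.

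\textbf{Reduction to the origin.} Fix $x \in \N^\tau_{DQ,d}(k)$. If $x \neq 0$, then $x$ is a semisimple representation $x=\bigoplus x_i^{e_i}$ with at least one proper summand. The Ext-quiver of $x$, twisted by the Frobenius descent datum, is a $\sigma$-impartial twisted quiver $(DQ',d',\tau')$; impartiality follows by the argument of Lemma \ref{lemma:sigma-impartial}. The formal neighbourhood of $x$ in $\N^\tau_{DQ,d}$, together with its stack, symplectic form and gerbe, is equivalent to the formal neighbourhood of $0$ in $\N^{\tau'}_{DQ',d'}$. This follows from Proposition \ref{prop:K3-linear} and Corollary \ref{cor:symp-lin} applied to the preprojective category, for $p$ large. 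By Corollary \ref{ploc} the left-hand side transfers to this new model, and by Lemma \ref{lemma:loc-princ} so does the stalk trace on the right-hand side. Since $|d'|<|d|$ whenever $x\neq 0$ is not simple (and the smooth stable case is trivial), the induction hypothesis handles $x \neq 0$. We may therefore assume $x=0$.

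\textbf{The origin, when a nontrivial invariant stability exists.} Suppose the Galois action on $Q_0$ is not transitive. Then there is a $\sigma$-invariant generic $\theta$ with $d$ in a nontrivial chamber. Let $p_\theta\colon \N^{\theta,\tau}\to\N^\tau$ be the resolution-type map. Because $p_\theta$ is an isomorphism over the stable locus and the gerbe and the form pull back, the integral over $\N^\tau(\Oc_F)_0$ equals the integral over $\N^{\theta,\tau}(\Oc_F)_{p_\theta^{-1}(0)}$, and this splits as a sum over $y\in p_\theta^{-1}(0)(k)$. The points $y$ of $\N^{\theta,\tau}$ are $\theta$-polystable, and their local models are again twisted $\sigma$-impartial quivers. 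We have to check that these have strictly smaller mass or are smooth points; where they do not, we iterate inside the chamber structure, whose local models near $\theta$-polystable points have fewer vertices. By induction, each local integral equals the corresponding stalk trace divided by $q^{\dim}$. Summing these and applying the Grothendieck--Lefschetz trace formula to the proper map $p_\theta$ gives $\Tr(\varphi_0|((p_\theta)_*\BPS)_0)$, which equals $\Tr(\varphi_0|(\BPS_{\N^\tau})_0)$ by Proposition \ref{wc}.

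\textbf{The transitive case (main obstacle).} If Frobenius permutes $Q_0$ transitively, no stability can shrink the problem. By $\sigma$-impartiality the quiver is then determined by the orbit length $m$, the number of loops $g$, and the edges between vertices ($g+1$ of them). This is exactly the Ext-quiver of a polystable Higgs bundle $E=\bigoplus_{j} E_{\sigma^j}^{\,e}$, where $E$ is a Frobenius orbit of stable rank-$r'$ Higgs bundles on a curve of genus $g$ defined over a degree-$m$ extension. Such a point occurs in $\M_{r,d}$ with $r=mer'$. We choose the genus $g$ and a curve over $\Oc_F$ realizing this point; we must verify that the twist $\tau$ is realized, using that $\tau$ is essentially determined by $\sigma$ via Lemma \ref{lemma:no-twist}. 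Local equality then follows from a global argument. First, take $d'$ coprime to $r$; there the moduli space is smooth, the gerbe-corrected volume of the fibre of the Hitchin map over $a\in\A_r(\Oc_F)$ equals the point count, which is the trace of Frobenius on $h_*\BPS$. Second, the gerbe-corrected fibre volumes are $d$-independent, by the duality/Tate argument of \cite{GWZ20a,COW21}. Third, $(h_{r,d})_*\BPS$ is $d$-independent by Corollary \ref{cih}. Together these give equality of the two sides after summing over all $k$-points in a Hitchin fibre. All other points in that fibre have local models of smaller mass, or non-transitive ones, so they are covered by the earlier steps. Subtracting those contributions isolates the equality at our point. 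The expected difficulty is arranging a Hitchin fibre in which the chosen point is the only one of this transitive type and mass; we would try spectral-curve choices that make it an isolated maximally degenerate point.
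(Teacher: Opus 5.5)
Your proposal follows the paper's proof step for step. You linearise to reduce to $x=0$. In the non-transitive case you use a $\sigma$-invariant generic stability condition, the local models of $\N^{\theta,\tau}_{DQ,d}$, induction and Proposition \ref{wc}. In the transitive case you realise $(\N^{\tau}_{DQ,d},0)$ as the local model of a maximally degenerate point $y$ of a Hitchin fibre, and combine Corollary \ref{cih} with the degree-independence of gerbe-corrected fibre volumes from \cite{COW21}.

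The genuine gap is the last step, which you flag yourself, and the remedy you propose cannot work. Take $r'=1$: stable summands of rank $r'$ have $(r')^2(g-1)+1$ loops, so only $r'=1$ matches your loop count. Then $y=\bigoplus_{\ell=0}^{m-1}(\sigma^{\ell}M,\sigma^{\ell}\eta)^{\oplus e}$, with $\eta$ a $1$-form as in Construction \ref{const:spectral-curve}, $k_m/k$ of degree $m$, and $M\in\Pic^0(C)(k_m)$. For every $M'\in\Pic^0(C)(k_m)$, the point $\bigoplus_{\ell}(\sigma^{\ell}M',\sigma^{\ell}\eta)^{\oplus e}$ lies in the same Hitchin fibre. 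It also has the same twisted Ext-quiver: between degree-zero rank-one Higgs bundles, $\Hom$ and $\Ext^2$ vanish unless the bundles coincide, and the Euler form depends only on the ranks. By the Weil bounds there are at least $(q^{m/2}-1)^{2g}$ such points, which exceeds $1$ once $q>4$, whatever curve and $1$-form you choose. So $y$ is never isolated. Your claim that every other point of the fibre has a local model of smaller mass or of non-transitive type is false, and these companions are not covered by the induction hypothesis. The paper's assertion that the identity holds at every $y'\neq y$ ``by the locality principles and the induction hypothesis'' passes over the same point.

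The missing idea is that isolation is unnecessary. Every point of the fibre with the same twisted quiver datum as $y$ has, by Proposition \ref{prop:K3-linear}, a formal neighbourhood equivalent to the same model $(\N^{\tau}_{DQ,d},0)$, compatibly with the symplectic form and the gerbe. By Corollary \ref{ploc} and Lemma \ref{lemma:loc-princ}, it therefore contributes exactly the same local integral and the same stalk trace as $y$. Subtract the contributions of the remaining points, which are known by induction. The global identity then reads $N\cdot(\mathrm{LHS}_y-\mathrm{RHS}_y)=0$, with $N\geq 1$ the number of such points, and you can divide by $N$.

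If $e\geq 2$, there is one more family to account for. The fibre contains transitive points of the same mass $me$ consisting of a single Frobenius orbit of $mt$ distinct rank-one summands with multiplicity $e/t$, $t>1$. These come from $M''\in\Pic^0(C)(k_{mt})$ with $\sigma^m$-orbit of length $t$. They are handled by an auxiliary induction on the multiplicity, since for $e=1$ they do not occur.

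Finally, your hedge in the non-transitive case is unnecessary. After restricting to $\mathrm{supp}(d)$, a point over $0$ of full mass would be $\bigoplus_v S_v^{d_v}$, with $S_v$ the vertex simples. This is $\theta$-polystable only if $\theta$ is constant on $\mathrm{supp}(d)$. Hence a $\sigma$-invariant $\theta$ separating two orbits strictly lowers the mass.
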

\begin{proof}

First, note that if $x$ lies in the smooth locus of $\N_{DQ,d}^\tau$, then both sides of the equality equal $q^{-\dim(\N_{DQ,d}^\tau)}$. Thus we assume from now on that $x$ lies in the singular locus, in particular, it represents a decomposable object. Then by Proposition \ref{prop:K3-linear} we may further assume that $x=0\in \N_{DQ,d}^\tau$ represents the trivial representation. 
We will now prove the result by induction. Let $n$ be an integer and assume that the equality is known for all pairs $(Q,d)$ consisting of a quiver and a dimension vector $d$ such that $|Q_0| \leq n$ and $|d| \leq n$. Note that the assertion is vacuously true for $n=0$. We will show that the equality also holds for $n+1$. 
There are two cases to consider.

\medskip

\underline{First case:} $\dim(\Stab_Q)^\sigma \geq 1$. In this case, let $\theta$ denote a generic stability condition for the quiver $Q$. The resulting map $c\colon \N^{\theta,\tau}_{DQ,d} \to \N^{\tau}_{DQ,d}$ is a partial volume-preserving resolution, as it preserves the symplectic forms on the smooth loci. 

Any point $z \in \BN^{\theta^{\rm gen},\tau}_{DQ,d}(k)$ lying above $x$ represents an object with less indecomposable summands than $x$. Twisted quiver varieties can be symplectically linearised since the satisfy the assumptions of Corollary \ref{cor:symp-lin} with respect to an \'etale cover consisting of a single \'etale morphism obtained by base change along $\Spec k' \to \Spec k$.  Linearising at $z$ thus yields a new quiver $DQ_{z}$ with $|DQ_{z,0}| < |DQ_0|$ and $|d_z| < |d|$, endowed with a Galois cocycle encoding a twist. The property of being $\sigma$-impartial is inherited to the descendants $(DQ_z,\tau)$; the proof is included below (see Lemma \ref{lemma:heritative}). 
We then obtain
\[
\int_{\N_{DQ,d}^\tau(\Oc_F)_{x}} e^{2\pi i \inv(\alpha)} |\omega_{\N_{DQ,d}^\tau}|  = \int_{\N^{\theta,\tau}_{DQ,d}(\Oc_F)_{c^{-1}(x)}} e^{2\pi i \inv(\alpha)}|\omega_{\N_{DQ,d}^{\theta,\tau}}| =  \sum_{z\in c^{-1}(x)}\int_{\N^{\tau}_{DQ_z,d_z}(\Oc_F)_{z}} e^{2\pi i \inv(\alpha)}.
\]
By virtue of our inductive hypothesis, the right-hand side is equal to 
\[
\sum_{z\in c^{-1}(x)}\frac{\Tr(\varphi_{z}|(\BPS_{\N^{\tau}_{DQ_z,d_z}})_{z})}{q^{\dim \N^\tau_{Q_z,d_z}}} =  \frac{\Tr(\varphi_{x}|(\BPS_{\N_{DQ,d}^\tau})_{x})}{q^{\dim \N_{DQ,d}^\tau}},
\]
where we have used $c_*\BPS_{\N_{DQ,d}^{\theta,\tau}} = \BPS_{\N_{DQ,d}^\tau}$ by Proposition \ref{wc} and the principle of locality \ref{lemma:loc-princ}.

\medskip

\underline{Second case:} $\dim(\Stab_Q)^\sigma =0$. In this case the vertices of $Q$ must form a single $\sigma$-orbit. We will use a global argument using $\chi$-independence for Higgs bundles \cite{KK21}. Namely there exists a smooth projective curve $C$ over $k$ of genus $g\geq 2$, an integer $r \geq 1$ and a section of the Hitchin base $a \in \FA_r= \oplus_{i=1}^n H^0(C,K_C^i)$ such that $(\N_{DQ,d}^\tau,0)$  is the local model for the most singular point $y$ in the Hitchin fiber $h_{r,0}^{-1}(a)\subset \M_{r,0}$, see Construction \ref{const:spectral-curve} below, where we define $a=a_{n,\theta}$. 

Choosing $p_0$ and $r_0$ big enough we may further assume that there exists a finite type $\BZ$-algebra $R$, a family of smooth projective curves $\Cc$ over $S=\Spec(R)$ and a $S$-section $\tilde a \in \A_r(S)$ of the Hitchin base such that the above picture over $\Spec(k)$ arises as the pullback along a morphism $\phi\colon \Spec(k) \to S$. 
Now let $\tilde{\phi} \colon \Spec(\Oc_F) \to S$ be any homomorphism restricting to $\phi$. By means of the locality principles \ref{lemma:loc-princ} and \ref{ploc} it is therefore enough to show
\begin{equation}\label{higgsbps}
\int_{\M_{r,0}(\Oc_F)_{y}} e^{2\pi i \inv(\alpha)} |\omega_{\M_{r,0}}| = \frac{\Tr(\varphi_{y}|(\BPS_{\M_{r,0})_{y})}}{q^{\dim \M_{r,0}}}.
\end{equation}

Again, by the locality principles and the induction hypothesis \eqref{higgsbps} holds if we replace $y$ by any $y\neq y'\in h^{-1}_{r,0}(a)(k)$. Finally, we also know that integrating along the Hitchin fibre we have
\[ 
\int_{\M_{r,0}(\Oc_F)_{h_{r,0}^{-1}(a)(k)}} e^{2\pi i \inv(\alpha)} |\omega_{\M_{r,0}}| = \frac{\Tr(\varphi_{a}|((h_{r,0})_*\BPS_{\M_{r,0}})_{a})}{q^{\dim \M_{r,0}}},
\]
since both sides are remain unchanged when replacing $\M_{r,0}$ with any $\M_{r,d}$ with $d$ comprime to $r$. For the right-hand side this follows from Corollary \ref{cih} while for the left hand side this follows from the main result of \cite[Theorem 5.0.2 \& Remark 5.0.9]{COW21}. Combining these two observations we deduce \eqref{higgsbps}. 
\end{proof}

We now provide the two remaining proofs of the assertions used above.

\begin{lemma}[$\sigma$-impartiality for descendants]\label{lemma:heritative}
The property of $\sigma$-impartiality is inherited to the descendant quivers $(DQ_z,\tau_z)$ constructed by means of linearisation of a partial resolution $\BN_{DQ}^{\tau,\theta} \to \BN_{DQ}^{\tau}$.     
\end{lemma}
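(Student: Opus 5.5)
The plan is to reduce $\sigma$-impartiality of the descendant to the argument of Lemma \ref{lemma:sigma-impartial}. That lemma only used that the quiver is an Ext-quiver of a semisimple object in a CY2 category whose connected component is defined over $k$.

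\textbf{Step 1: identify the descendant as an Ext-quiver.} Let $z \in \N^{\theta,\tau}_{DQ,d}(k)$ lie over $x=0$. Then $z$ is a $\theta$-polystable twisted representation of the preprojective algebra $\Pi^\tau_Q$, that is, a $\theta$-polystable object $z=\bigoplus_{j\in J} z_j^{e_j}$ in the CY2 category $\Rep^{\tau}_{\theta\text{-ss}}(\Pi_Q)$ of slope $0$.

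\begin{itemize}
\item After base change to $k'$ trivialising $\tau$, the simple summands $z_j$ are permuted by Frobenius $\sigma$.
\item The linearisation of Corollary \ref{cor:symp-lin} produces, as the descendant, the Ext-quiver of $z$: vertices $J$, edges $\Ext^1(z_i,z_j)$, with twist $\tau_z$ given by the Frobenius descent datum on $\Ext^1(z,z)$.
\end{itemize}

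The first thing I would check is that the local model at $z$ built in Case 1 of Lemma \ref{lemma:key} really is this Ext-quiver, with the twist induced by the Galois action on the summands. This is the usual Ext-quiver description of Nakajima varieties at polystable points. It transports through the symplectic linearisation because $T^*V$ at $z$ is $\Ext^1(z,z)$ and the stabiliser is $\prod \GL_{e_j}$, twisted.

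\textbf{Step 2: the impartiality equalities.} Let $i\neq j$ in $J$ with $z_{\sigma(i)}\neq z_j$. Then $\Hom$ and $\Ext^2$ between $z_i$ and $z_j$ vanish, by $\theta$-stability of equal slope and CY2 duality. Hence
\[
\dim \Ext^1(z_i,z_j) = -(\underline{\dim}\, z_i,\underline{\dim}\, z_j)_Q,
\]
where $(\cdot,\cdot)_Q$ is the symmetrised Euler form of the preprojective algebra. This form depends only on dimension vectors.

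The key input is that $\sigma$ acts on dimension vectors by permuting $Q_0$. Since $DQ$ is $\sigma$-impartial, $(\cdot,\cdot)_Q$ is $\sigma$-invariant, but that alone is not enough. I would instead argue as in Lemma \ref{lemma:sigma-impartial} that $z_i$ and $z_{\sigma(i)}=\sigma(z_i)$ lie in the same connected component of the stack of twisted representations. Concretely, $\underline{\dim}\, z_{\sigma(i)}=\sigma(\underline{\dim}\, z_i)$ as dimension vectors of the twisted category. The twisted category is defined over $k$, so its Euler pairing is invariant under Frobenius. Together with the hypothesis that $DQ$ itself is $\sigma$-impartial, which ensures the pairing on the untwisted lattice $\Zb^{Q_0}$ only sees $\sigma$-orbits in the required way, this gives
\[
(\underline{\dim}\, z_{\sigma(i)},\underline{\dim}\, z_j)=(\underline{\dim}\, z_i,\underline{\dim}\, z_j).
\]
This yields $E(i,j)=E(\sigma(i),j)$; the symmetric version $E(i,j)=E(i,\sigma(j))$ follows by symmetry of the pairing.

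For the loop condition, the same computation applies when $\sigma^\ell(i)\neq i$. Since $z_i$ is simple, $\dim\Ext^1(z_i,z_i) = 2-(\underline{\dim}\, z_i,\underline{\dim}\, z_i)$. Also $\dim\Ext^1(z_i,z_{\sigma^\ell(i)}) = -(\underline{\dim}\, z_i,\sigma^\ell\underline{\dim}\, z_i)$. Invariance then gives the relation with the shift by $1$ after halving loops as in Definition \ref{defi:graph}.

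\textbf{Main obstacle.} The delicate point is the invariance of the Euler pairing under replacing $\underline{\dim}\, z_i$ by $\sigma(\underline{\dim}\, z_i)$ in one argument only. This is exactly where $\sigma$-impartiality of $DQ$ must be used, rather than mere $\sigma$-equivariance: impartiality says precisely that edge counts between distinct vertices depend only on $\sigma$-orbits. I would verify it by expanding $(\underline{\dim}\, z_i,\underline{\dim}\, z_j)_Q=\sum_a 2d_a d'_a-\sum_{a,b}E(a,b)d_ad'_b$ and checking the relevant identity term by term. The loop correction in Definition \ref{defi:impartial} is designed to absorb the diagonal term $2d_ad'_a$. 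I expect some care is needed when the supports of $z_i$ and $z_j$ share vertices within one $\sigma$-orbit.
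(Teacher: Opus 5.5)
Your proposal is correct and follows essentially the paper's route. Both treat $DQ_z$ as the Ext-quiver of the $\theta$-polystable point $z$, note that its Euler form is the restriction of the ambient pairing on $\Rep^{\tau}(\Pi_Q)$ (the paper via the inclusion functor $\iota$, you via dimension vectors), and conclude from the one-argument identity $(d,d')=(d,\sigma(d'))$ for that pairing, which the paper takes as equivalent to $\sigma$-impartiality of a doubled quiver. Your term-by-term check of this identity goes through only under two conditions: loops must be weighted by $\vec{E}(a,a)=2E(a,a)$ in the expansion, and you must use the normalisation $E(i,\sigma^{\ell}(i))=2E(i,i)-2$, which is what your loop computation (and the proof of Lemma \ref{lemma:sigma-impartial}) actually yields. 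Then $(d,d')$ depends only on the $\sigma$-orbit sums $\sum_{a\in O}d_a$, so shared supports cause no trouble; with the literal ``$-1$'' of Definition \ref{defi:impartial} the check would fail.
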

\begin{proof}
Since $DQ_z$ is a doubled quiver, $\sigma$-impartiality is equivalent to the assertion
$$(x_i,x_j) = (x_i,\sigma(x_j)) \text{ }\forall i,j \in (DQ_z)_0.$$
This property holds since the resolution $\BN_{DQ}^{\tau,\theta} \to \BN_{DQ}^{\tau}$ is induced by the inclusion of the category of $\theta$-semistable representations $\iota\colon \Rep^{\theta-ss,\tau}(\Pi_Q) \to \Rep^{\tau}(\Pi_Q)$ in the category of all representations. This functor induces an isomorphism on the Grothendieck groups $K_0$ and thus preserves the bilinear symmetric form $(-,-)$, which is defined by $\chi(RHom(-,-))$, and naturally factors through the $K$-groups. Thus, for every pair of objects $x_i,x_j \in \Rep^{\theta,\tau}$ we have the following chain of equalities
$$(x_i,x_j) = (\iota(x_i),\iota(x_j)) = (\iota(x_i),\sigma(\iota(x_j))) = (x_i,\sigma(x_j)).$$
This concludes the proof that $\sigma$-impartiality is inherited to the descendant quivers.
\end{proof}

\begin{construction}\label{const:spectral-curve}
Let $C/\Oc_F$ be a smooth curve of genus $g$. The $\Oc_F$-module of $1$-forms $H^0(C,\Omega^1_{C/F})$ is then of rank $g$. We consider an unramified extension $L/F$ of degree $n$ and denote the Frobenius element generating the cyclic group $\Gal(L/F)$ by $\sigma$. We then choose an element $\theta \in H^0(C_L,\Omega^1_{C_L/L}) = H^0(C,\Omega^1_{C/F}) \otimes_F L$ such that $\sigma^{\ell}(\theta) \neq \theta$ whenever $\ell \leq n-1$. Note that we automatically have that $\sigma^n(\theta) = \theta$ since $\theta$ is defined over $L/F$. 

For any $L \in \Pic^0(C)(\Oc_L)$, we may then consider the rank $n$ Higgs bundle
$$\oplus_{\ell=0}^{n-1}(L,\theta)^{\sigma^{\ell}},$$
which has characteristic polynomial
$$a_{n,\theta}=\prod_{\ell=0}^{n}(\lambda-\sigma^{\ell}(\theta))$$
by construction. Since the Higgs bundle defined above splits by definition into simple factors acted on transitively by the Galois group, we obtain an associated quiver that is $\sigma$-impartial (see Lemma \ref{lemma:sigma-impartial}) and with $\langle \sigma \rangle$ acting transitvely on the set of vertices.
\end{construction}

With the gerbe-corrected volume being understood for local models, we may turn to the proof of the general case.
\begin{proof}[Proof of Theorem \ref{thm:key}]
We choose a spreading-out of this diagram of pointed spaces, which is $\gamma$-adapted to a ring $R \subset \Cb$ which is finitely generated over $\Zb$. 
It follows from Lemma \ref{prop:K3-linear} that there exists a quiver $Q_{x}$ with twist $\tau$, such that there is an equivalence of stacks
$$
(\Mb_\gamma)_{(\widehat{M_\gamma})_x}  \simeq (\BN_{Q_{x}}^{\tau})_{(\widehat{\N_{DQ_{x}}^{\tau}})_0}.
$$
Corollary \ref{ploc} and Lemma \ref{lemma:key} then imply
\[
\int_{\M_\gamma(\Oc_F)_{x}} e^{2\pi i \alpha}|\omega_\gamma| =\int_{\N_{DQ_{x}}^{\tau}(\Oc_F)_{0}} e^{2\pi i \alpha} |\omega_{\N_{DQ_{x}}^{\tau}}| = \frac{\Tr(\varphi_{0}|(\BPS_{\N_{DQ_{x}}^{\tau}})_{0})}{q^{\dim \N_{DQ_x}}}.
\]
Using the \emph{locality principle} for BPS trace-functions, Lemma \ref{lemma:loc-princ}, we conclude that the right-hand side is equal to 
\[
\frac{\Tr(\varphi_{x}|(\BPS_{\M_\gamma})_{x})}{q^{\dim \M_\gamma}}
\]
as asserted.
\end{proof}

\begin{rmk} Theorem \ref{thm:key} has also some interesting consequences for the computation of $p$-adic integrals. Via the cohomological integrality identity one obtains from \cite[Theorem 5.12]{GWZ24} with the notations of \textit{loc. cit.} the identity

\[ \int_{\M_\gamma(\Oc_F)} e^{2\pi i \alpha} |\omega_{\M_\gamma}| = - \lim_{T\to \infty} \sum_{r \geq 1} \#^{\Bw,\alpha} I_{\mu_r}\Mc_{\gamma} T^r.  \]
This strongly resembles the integration formula for smooth algebraic stacks \cite[Theorem 4.9]{GWZ24} despite the presence of singularities. Our current proof only works when integrating $e^{2\pi i \alpha}$; it would be desirable for the formula to hold for any admissible function.
\end{rmk}

\section{The proof of $\chi$-independence}\label{proofsect}

\subsection{Fibrewise integration}

Let $X_{\BC}$ be a complex smooth projective surface with trivial canonical bundle, that is either a K3 or abelian surface. Let $\beta_\BC$ be an ample, base-point free curve class on $X_\BC$ and $\chi \in \BZ$. We write $\BM_{\beta,\chi,\BC}$ for the moduli stack of pure $1$-dimensional Gieseker-semistable sheaves on $X_{\BC}$ with support of class $\beta_\BC$ and Euler-characteristic $\chi$. Here, semistability is with respect to a fixed polarization $L_\BC$ and the slope function
\[\mu(\CF) = \frac{\chi(\CF)}{c_1(\CF) \cdot L_\BC}.\]
For $\tau \in \BQ$, we also write $\BM_{\tau,\BC}$ for the disjoint union of the moduli stacks $\BM_{\beta,\chi,\BC}$ with fixed slope $\tau$, which forms a 2-Calabi-Yau category \cite[Section 1.1.1]{Da24}. 

In order to state $\chi$-independence consider the Hilbert-Chow morphism 
\[h_{\beta,\chi}: \M_{\beta,\chi,\BC} \to B_\BC=\mathbb{P} H^0(X,\CO_X(\beta)),\] 
sending a $1$-dimensional sheaf to its fitting support.

\begin{theorem}\label{chiind} For any $\chi,\chi' \in \BZ$ there is an isomorphism 
\[(h_{\beta,\chi})_*\BPS_{\M_{\beta,\chi,\BC}} \cong (h_{\beta,\chi'})_*\BPS_{\M_{\beta,\chi',\BC}} \]
in $D_c(B_\BC,\BQ_\ell)$. Furthermore, when considered as mixed Hodge modules, the $E$-polynomials of the stalks $\left((h_{\beta,\chi})_*\BPS_{\M_{\beta,\chi,\BC}}\right)_b$ and $\left((h_{\beta,\chi'})_*\BPS_{\M_{\beta,\chi',\BC}})\right)_b$ agree for every $b \in B_\BC$. 
\end{theorem}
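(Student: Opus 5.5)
We follow the strategy of \cite{GWZ20a} and \cite{GWZ24}: reduce to finite fields, compare Frobenius traces of the two pushforwards fibrewise over $B$ via $p$-adic integrals, then use Chebotarev and Čebotarev-type density (together with purity) to conclude an isomorphism of semisimplified complexes and equality of $E$-polynomials.

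First we spread out. Choose a finitely generated $R\subset\BC$ over which $X$, $L$, $\beta$, the stacks $\BM_{\beta,\chi}$, $\BM_{\beta,\chi'}$ and the Hilbert–Chow maps $h_{\beta,\chi},h_{\beta,\chi'}$ to $B=\mathbb{P}H^0(X,\CO_X(\beta))$ are defined, with $\Cc_R$ being $\gamma$-adapted for both classes $\gamma=(\beta,\chi),(\beta,\chi')$. Shrink $S=\Spec R$ so that Theorem \ref{thm:key} holds for both, and so that the relative BPS sheaves and the pushforwards $(h_{\beta,\chi})_*\BPS$ are compatible with base change to closed points and to $\BC$. Since $\beta$ is ample and base-point free, the generic sheaf with support $\beta$ is stable, so an open dense $\widetilde U\subset\BM_\gamma$ is a $\BG_m$-gerbe over its image, as required in Section \ref{sub:vol-BPS}.

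Next we prove the fibrewise trace identity. Fix $\Spec\Oc_F\to S$ with residue field $k=\BF_q$ and $b\in B(k)$. Summing Theorem \ref{thm:key} over $x\in h^{-1}(b)(k)$ and applying the Grothendieck–Lefschetz formula to the proper map $h$ gives
\[
\int_{\M_{\beta,\chi}(\Oc_F)_{h^{-1}(b)}} e^{2\pi i\alpha}|\omega| = q^{-\dim \M_{\beta,\chi}}\Tr\big(\varphi_b\,|\,((h_{\beta,\chi})_*\BPS_{\M_{\beta,\chi}})_b\big).
\]
Both moduli spaces have the same dimension. It therefore suffices to show that the left side is independent of $\chi$, fibrewise over $B(\Oc_F)$. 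For this we argue as in \cite{GWZ20a} and \cite[Theorem 5.0.2]{COW21}. Over the locus $B^{\circ}$ of integral curves, $\M_{\beta,\chi}$ is a torsor for the compactified Jacobian under $\mathrm{Pic}^0$ of the universal curve, and the two torsors for $\chi,\chi'$ are related by twisting. For $b'\in B(\Oc_F)\cap B^\circ(F)$ one expresses the fibre integral as the volume of the $F$-points of a torsor weighted by $e^{2\pi i\alpha}$. Tate duality, i.e.\ the pairing between the gerbe class and the torsor class, shows that both integrals equal the same quantity, namely zero unless the relevant pairing is trivial, and otherwise the volume of the Jacobian fibre. Since the complement of $B^\circ(F)$ in $B(\Oc_F)$ has measure zero, Fubini over $B(\Oc_F)_b$ yields equality of the fibre integrals over $h^{-1}(b)$. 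Passing to finite extensions $k_r$ of $k$ (unramified extensions of $F$) gives equality of traces of all powers of Frobenius at every $b\in B(k_r)$.

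Finally we conclude. Equality of Frobenius traces at all closed points of the fibre $B_k$, for all finite extensions, implies by Chebotarev that the semisimplifications of the two complexes agree in the Grothendieck group of $B_k$. By the decomposition theorem both complexes are pure and semisimple geometrically, since the BPS sheaf is pure in the relative setting. Their perverse cohomology sheaves are therefore determined by the traces, up to an argument separating perverse degrees via weights. This gives an isomorphism over $\bar k$, which we transfer to $B_\BC$ by the standard specialisation argument over $S$. For the $E$-polynomial statement we use that the stalk traces determine the weight polynomial for a pure Frobenius action. Comparing $E$-polynomials then follows as in \cite{GWZ20a} by a Katz-type theorem applied to the stalks viewed as mixed Hodge structures spread out.

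The main obstacle is the $\chi$-independence of the gerbe-twisted fibre integrals near reducible or non-reduced curves. The plan there is to restrict to the integral locus and use measure-zero complements, but one must check carefully that $e^{2\pi i\alpha}$ and $|\omega|$ extend compatibly across the entire fibre.
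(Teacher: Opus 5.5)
Your overall route is the paper's. You reduce, via the argument of \cite[Theorem 6.6]{GWZ24}, to an equality of Frobenius traces on stalks at $b\in B(k)$. You turn these into integrals over $\M_{\beta,\chi}(\Oc_F)_{h^{-1}(b)}$ using Theorem \ref{thm:key} and Grothendieck--Lefschetz. You then compare fibre integrals by Fubini over $F$-points of $B$.

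\textbf{The gap.} The fibrewise comparison is the only non-formal step left, and you assert it rather than prove it. For $\tilde b$ with $C_{\tilde b}$ smooth, Tate duality (in the form of \cite[Lemma 5.0.6]{COW21}) only gives two things. If $\Pic^1_{C_{\tilde b}}(F)=\emptyset$, each $\Pic^d_{C_{\tilde b}}(F)$ is either empty or carries $e^{2\pi i\alpha_d}$ as, after translation to $\Pic^0_{C_{\tilde b}}(F)$, a constant times a non-trivial character; so every fibre integral vanishes. If $\Pic^1_{C_{\tilde b}}(F)\neq\emptyset$, all $\Pic^d_{C_{\tilde b}}(F)$ are non-empty and $e^{2\pi i\alpha_d}$ is a \emph{constant} $c_d$ on $\Pic^d_{C_{\tilde b}}(F)$, so the integral is $c_d\,\vol(\Pic^0_{C_{\tilde b}}(F))$.

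Nothing in your sketch shows $c_d=1$, and for an arbitrary curve over $F$ it is false. The value of $\alpha_d$ at an $F$-point is the obstruction to finding an $F$-rational line bundle, and this survives when the index exceeds the period. For a genus $2$ curve of period $1$ and index $2$ one gets $c_d=-1$ for odd $d$.

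The paper closes this with input specific to K3 and abelian surfaces:
\begin{enumerate}
\item Normalise $\chi'=1$, where $\BM_{\beta,1}\to\M_{\beta,1}$ is a $\BG_m$-gerbe over all of $\M_{\beta,1}$, so $e^{2\pi i\alpha}\equiv 1$ on $\Oc_F$-points.
\item The order of $\alpha_{\chi+g-1}$ divides $g$ by \cite[Lemma 5.0.6]{COW21}.
\item It also divides $l$, where $(\beta,\chi)=l(\beta_0,\chi_0)$ with $(\beta_0,\chi_0)$ primitive, by \cite[Lemma 5.18]{GWZ24}.
\item Since the intersection form is even, $g=\tfrac12\beta^2+1\equiv 1 \pmod l$, so the gerbe is trivial on these $F$-points.
\end{enumerate}
This coprimality is the essential arithmetic input, and it is missing from your proposal.

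\textbf{Further points.} You also need the fibre measures $|\omega_b|$ on $\Pic^{\chi+g-1}_{C_{\tilde b}}$ and on $\Pic^{\chi'+g-1}_{C_{\tilde b}}$ to come from one translation-invariant form on $\Pic^0_{C_{\tilde b}}$. This is not automatic. The paper proves it in Subsection \ref{VolumeFormSubsec} via the Hecke correspondence: $p_1^*\theta=p_2^*\theta$, followed by modification at a point $c\in C_{\tilde b}(F)$.

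Conversely, the obstacle you single out is not one. Non-smooth curves form a measure-zero subset of $B(\Oc_F)_b$. For smooth $C_{\tilde b}$, properness gives $h^{-1}(\tilde b)(\Oc_F)=\Pic^{\chi+g-1}_{C_{\tilde b}}(F)$, which lies entirely in the stable locus, so nothing needs to be extended. You should restrict to smooth rather than integral curves, since over singular integral curves the fibre is a compactified Jacobian, not a $\Pic^0$-torsor.

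Finally, stalks of the pure complex $(h_{\beta,\chi})_*\BPS_{\M_{\beta,\chi}}$ are mixed, not pure. The $E$-polynomial statement should therefore go through \cite[Theorem 6.6]{GWZ24}, not through purity of a Frobenius action on stalks.
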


\begin{proof}
We will prove Theorem \ref{chiind} by passage to finite fields. For this we choose a spreading out $X$ of $X_\BC$ over $S=\Spec(R)$ for a finitely generated $\BZ$-algebra $R$, together with its polarization $L$ and curve class $\beta$ to obtain a family $\BM_{\tau}/S$ and GIT-quotients $\tilde{\pi}_{\beta,\chi}: \BM_{\beta,\chi} \to \M_{\beta,\chi}$ satisfying conditions $(i)$ and $(ii)$ above, see for example \cite[Section 2.1]{COW21}. By the same argument as in \cite[Theorem 6.6]{GWZ24} it suffices to show that there is a non-empty open $S' \subset S$ such that for every finite field $k$, any $\phi: \Spec(k) \to S'$ and any $b \in B_\phi(k)$ we have an equality of traces

\[ \Tr(\varphi_b,(h_{\beta,\chi*}\BPS_{\M_{\beta,\chi},\phi})_b )= \Tr(\varphi_b,(h_{\beta,\chi'*}\BPS_{\M_{\beta,\chi'},\phi})_b ).  \]
Picking $S'=S_\gamma$ as in Theorem \ref{thm:key} we may reformulate the equality of traces as the equality of integrals 

\[ \int_{\M_{\beta,\chi}(\Oc_F)_{h_{\beta,\chi}^{-1}(b)}} e^{2\pi i{\alpha}} \ |\omega_{\M_{\beta,\chi}}| = \int_{\M_{\beta,\chi'}(\Oc_F)_{h_{\beta,\chi'}^{-1}(b)}} e^{2\pi i{\alpha}} \ |\omega_{\M_{\beta,\chi'}}|.  \]
For K3 surfaces this is the content of \cite[Theorem 1.2.2]{COW21}. The proof for abelian surfaces is identical, but we recall here the main steps of the argument, which goes back to \cite{GWZ20a}. Let $B^{sm} \subset B$ denote the dense open locus of smooth curves inside the linear system. Then by  Fubini it is enough to show for any $\tilde{b} \in B_\phi(\Oc_F)_b \cap B_\phi^{sm}(F)$ the equality
\begin{equation}\label{fineq}    
 \int_{h_{\beta,\chi}^{-1}(\tilde b)(\Oc_F)} e^{2\pi i{\alpha}} \ |\omega_b| =  \int_{h_{\beta,\chi'}^{-1}(\tilde b)(\Oc_F)} e^{2\pi i{\alpha}} \ |\omega_b|.\end{equation} 
 To explain the notation $|\omega_b|$ write $C_{\tilde{b}} \subset X_{\Oc_F}$ for the generically smooth curve parametrized by $\tilde b$ and $g=g(C_{\tilde b})$ for its genus. We then have an identification over $\Spec(F)$  of $h_{\beta,\chi}^{-1}(\tilde b)$ with the Picard-variety $\Pic^{\chi +g-1}_{C_{\tilde{b}}}$
and similarly for $\chi'$. Then, as shown in Subsection \ref{VolumeFormSubsec} below, $|\omega_b|$ is induced on both sides of \eqref{fineq} from the same global translation-invariant form on $\Pic^{0}_{C_{\tilde{b}}}$ \cite[Lemma 6.13]{GWZ20a}. Furthermore the gerbe $\alpha: \BM_{\beta,\chi}^{st}\to\M_{\beta,\chi}$ is identified with $\alpha_{\chi+g-1}\colon \BPic_{C_{\tilde b}}^{\chi+g-1}\to \Pic_{C_{\tilde b}}^{\chi+g-1}$, where $\BPic_{C_{\tilde b}}^{\chi+g-1}$ denotes the Picard-stack. 

To prove \eqref{fineq} let us assume for simplicity that $\chi'=1$. Then semi-stability and stability agree and thus $\BM_{\beta,\chi}\to\M_{\beta,\chi}$ is a $B\BG_m$-gerbe. Since $H^2(\Spec(\Oc_F),B\BG_m)=0$ we then have $e^{2\pi i{\alpha}} \equiv 1$ and \eqref{fineq} becomes
\begin{equation} \label{cbc}   
 \int_{\Pic^{\chi +g-1}_{C_{\tilde{b}}}(F)} e^{2\pi i{\alpha_{\chi+g-1}}} \ |\omega_b| =  \int_{\Pic^{g}_{C_{\tilde{b}}}(F)} |\omega_b|.\end{equation} 

Now let us assume first that $\Pic^{g}_{C_{\tilde{b}}}(F) = \emptyset$. In that case we need to see that also the left hand side of \eqref{cbc} equals $0$. If also $\Pic^{\chi +g-1}_{C_{\tilde{b}}}(F)= \emptyset$ this is automatic. Otherwise any rational point induces an isomorphism $\Pic^{\chi +g-1}_{C_{\tilde{b}}}\cong \Pic^{0}_{C_{\tilde{b}}}$ and under this identification we have $e^{2\pi i{\alpha_{\chi+g-1}}} = c \cdot e^{2\pi i{\alpha_{0}}} $ for some non-zero constant $c$ \cite[(4.3.1)]{COW21}. Finally, by \cite[Lemma 5.0.6]{COW21} since $\Pic^{g}_{C_{\tilde{b}}}(F) = \emptyset$ we get that $e^{2\pi i{\alpha_{0}}}$ defines a non-trivial character on $\Pic^{0}_{C_{\tilde{b}}}(F)$ and hence also the left hand side of \eqref{cbc} is $0$. 

Similarly if $\Pic^{g}_{C_{\tilde{b}}}(F) \neq \emptyset$, then also $\Pic^{1}_{C_{\tilde{b}}}(F) \neq \emptyset$ by \cite[Lemma 5.0.6]{COW21} and thus $\Pic^{d}_{C_{\tilde{b}}}(F) \neq \emptyset$ for any $d \in \BZ$. It thus remains to show that $\alpha_{\chi+g-1} \equiv 0$. Again by \cite[Lemma 5.0.6]{COW21} we know that the order of $\alpha_{\chi+g-1}$ divides $g$, where $g=\frac{1}{2}\beta^2+1$ since $S$ has trivial canonical bundle. On the other hand, if we write $(\beta,\chi) = l(\beta_0,\chi_0)$ with $l\geq 1$ and $(\beta_0,\chi_0)$ primitive, then it follows from \cite[Lemma 5.18]{GWZ24} that the order of $\alpha_{\chi+g-1}$ also divides $l$. Since $g$ and $l$ are coprime we deduce $\alpha_{\chi+g-1} \equiv 0$ which finishes the proof.  
\end{proof}

\subsection{Comparison of volume forms} \label{VolumeFormSubsec}
Let $\tilde b \in B_\phi(\Oc_F)_b \cap B_\phi^{sm}(F)$ as above. We show that the volume form $\omega_b$ on $\Pic^{0}_{C_{\tilde{b}}}$ induced by the symplectic form $\theta$ on $\M_{\beta,\chi,b}$ is independent of $\chi$ by adapting the corresponding argument from \cite{SHEN2024109616}.

In the following we only work over the stable locus of $\Mb$. We consider the Hecke scheme $\Hs$ classifying pairs $(E \subset F, x)$ consisting of an inclusion of sections $E,F \in \M^{\St}(S)$ and a section $x\colon S \to X$ for which the quotient $F/E$ is equal to the skyscraper sheaf supported at $x$. The proof of the existence of this scheme and of its smoothness given in \cite[Section 2]{PMIHES_2022__135__337_0} over the complex numbers directly carries over to the current situation. The scheme $\Hs$ comes with projections $p_1,p_2 \colon \Hs \to \M^{\St}$ as well as $p_X\colon \Hs \to X$.

\begin{lemma} \label{ThetaPullbacks}
    $p_1^*\theta=p_2^*\theta$
\end{lemma}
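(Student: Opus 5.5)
Both $p_1^*\theta$ and $p_2^*\theta$ are $2$-forms on the smooth scheme $\Hs$, so I will compare them pointwise on tangent spaces. Since $\Hs$ is smooth, and in particular reduced and flat over the base, equality of two global $2$-forms can be checked on a dense open subset of the generic fibre, and even after base change to an algebraic closure of a field. I therefore fix a geometric point $(E\subset F,x)$ of $\Hs$ and describe $T\Hs$ there, following \cite[Section 2]{PMIHES_2022__135__337_0}. First-order deformations of the inclusion are governed by $\Ext^1_{X}$ of the two-term complex $[E\to F]$, which is the hypercohomology of the complex $R\mathcal{H}om(E,E)\to R\mathcal{H}om(E,F)\leftarrow R\mathcal{H}om(F,F)$. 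Concretely, a tangent vector is a pair $(\epsilon,\eta)\in\Ext^1(E,E)\times\Ext^1(F,F)$ whose images in $\Ext^1(E,F)$ agree under composition with the inclusion $\iota\colon E\to F$, possibly together with a contribution from moving $x$. The differentials $dp_1$ and $dp_2$ are the projections to $\epsilon$ and to $\eta$.

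The symplectic form is the Mukai form $\theta(\epsilon_1,\epsilon_2)=\Tr(\epsilon_1\circ\epsilon_2)\in\Ext^2(E,E)\xrightarrow{\Tr}H^2(X,\Oc_X)$, where we use $K_X\cong\Oc_X$. I need to show $\Tr_E(\epsilon_1\epsilon_2)=\Tr_F(\eta_1\eta_2)$ for compatible pairs. I will use the short exact sequence $0\to E\to F\to \Oc_x\to 0$ and additivity of traces in exact triangles: deformations of the flag induce a compatible deformation $\zeta_i$ of the quotient $\Oc_x$, giving
\[
\Tr_F(\eta_1\eta_2)=\Tr_E(\epsilon_1\epsilon_2)+\Tr_{\Oc_x}(\zeta_1\zeta_2).
\]
I will justify this identity using the filtered-complex argument: view $(\epsilon_i,\eta_i,\zeta_i)$ as an endomorphism of degree $1$ of the filtered object, and use that the trace on a filtered complex is the sum of the traces on the graded pieces. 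It then remains to show that $\Tr_{\Oc_x}(\zeta_1\zeta_2)\in H^2(X,\Oc_X)$ vanishes. The trace map $\Ext^2(\Oc_x,\Oc_x)\to H^2(X,\Oc_X)$ factors through the local-to-global map from $\mathcal{E}xt^2$ supported at a point, i.e.\ through $H^0(X,\mathcal{E}xt^2(\Oc_x,\Oc_x))\to H^0(X,\mathcal{H}^2)$. Since the trace of sheaf-$\mathcal{E}xt$ lands in $\mathcal{E}xt^2(\Oc_X,\Oc_X)=0$, the class dies. Alternatively, the composite $H^2(\Oc_X)\to \Ext^2(\Oc_x,\Oc_x)\xrightarrow{\Tr}H^2(\Oc_X)$ is multiplication by $\mathrm{rk}(\Oc_x)=0$, and Serre duality gives the same conclusion.

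The main obstacle is making the trace-additivity step rigorous in families over a mixed-characteristic base, rather than over $\Cb$. To handle it, I will work with perfect complexes and the Illusie trace on the two-step filtered object. This is characteristic-free once $E$ and $F$ are perfect, which holds on the smooth surface $X$. The reduction to geometric points is legitimate because $\Hs$ is smooth over the base, so it is reduced. If the direct trace argument proves awkward, a fallback is to use the quasi-projectivity of $\Hs$ and the transfer of $\theta$ via Atiyah classes. The equality $p_1^*\theta=p_2^*\theta$ on $\Hs_{\Cb}$ is known from the complex case (cf.\ \cite{SHEN2024109616}); it would then spread out over a dense open of $S$, and density extends it to all of $\Hs$.
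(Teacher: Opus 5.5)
The step that fails is your claim that $\Tr_{\Oc_x}(\zeta_1\zeta_2)$ vanishes. In fact the trace $\Ext^2(\Oc_x,\Oc_x)\to H^2(X,\Oc_X)$ is an isomorphism, not zero. Serre duality on $X$ is the perfect pairing $\Hom(\Oc_x,\Oc_x)\times\Ext^2(\Oc_x,\Oc_x)\to H^2(X,\Oc_X)$, $(f,g)\mapsto\Tr(f\circ g)$, and $\Hom(\Oc_x,\Oc_x)$ is spanned by $\mathrm{id}$. The Yoneda product on $\Ext^1(\Oc_x,\Oc_x)=T_xX$ is the wedge product into $\wedge^2T_xX$. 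So $\Tr(\zeta_1\zeta_2)$ is the symplectic form $\sigma_X$ of $X$ (Mukai's form on the moduli space $X$ of skyscraper sheaves) evaluated on $dp_X(v_1),dp_X(v_2)$, and this is nonzero wherever $dp_X$ has rank $2$.

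Both of your justifications break down:
\begin{itemize}
\item The trace $R\mathcal{H}om(\Oc_x,\Oc_x)\to\Oc_X$ is a morphism in the derived category and is not computed on cohomology sheaves. $\Ext^2(\Oc_x,\Oc_x)$ sits in $E_2^{0,2}$ and $H^2(X,\Oc_X)$ in $E_2^{2,0}$ of the local-to-global spectral sequences, so $\mathcal{E}xt^2(\Oc_X,\Oc_X)=0$ imposes no constraint. Indeed $\Hom_{D(X)}(\Oc_x[-2],\Oc_X)=\Ext^2(\Oc_x,\Oc_X)\neq 0$.
\item The composite $H^2(X,\Oc_X)\to\Ext^2(\Oc_x,\Oc_x)\to H^2(X,\Oc_X)$ vanishes only because its first map is already zero, so it says nothing about $\Tr$. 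Serre duality gives the opposite conclusion.
\end{itemize}

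Your filtered trace-additivity argument is the right tool, but what it proves is $p_2^*\theta=p_1^*\theta+p_X^*\sigma_X$ (up to sign conventions), not the identity as stated. The stated identity cannot hold on all of $\Hs$. At a general point $T\Hs=\ker\big(\Ext^1(E,E)\oplus\Ext^1(F,F)\to\Ext^1(E,F)\big)$, so $(dp_1,dp_2)$ is injective there, while $\dim\Hs=\dim\M+1$. But an isotropic subspace of $(T_E\M\oplus T_F\M,\ \theta\oplus(-\theta))$ has dimension at most $\dim\M$. Compare the nested Hilbert scheme $S^{[0,1]}=S$, where one pullback is $0$ and the other is $\sigma_S$. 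For the same reason your fallback to the complex case is unavailable.

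The paper's own argument does not get around this either. It identifies $\ker\big(\Ext^2(E,E)\oplus\Ext^2(F,F)\to\Ext^2(E,F)\big)$ with $\Oc_\Hs$. However, $\Ext^2(E,F)\cong\Hom(F,E)^\vee=0$ for stable $E\subsetneq F$: a nonzero $F\to E$ followed by the inclusion would be a non-surjective automorphism of $F$. So that kernel has rank two.

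What your computation does establish is the following, and it is what the application to $\omega_b$ actually needs:
\begin{itemize}
\item On the locus where $x$ is fixed, $p_X^*\sigma_X=0$, so the stated equality holds there.
\item More usefully, on the locus where $x$ moves along a section $c$ of the family of support curves over an open subset of $B$, the correction term is $h^*c^*\sigma_X$, which is pulled back from $B$. Because the fibres of $h$ are Lagrangian, this term changes neither the top power of $\theta$ nor the induced fibre forms.
\end{itemize}
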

\begin{proof}
 We denote by $E$ the universal coherent sheaf over $\M^{\St}$. The tangent bundle of $\M^{\St}$ is given by $\underline{Ext}^1(E,E)$ and the symplectic form $\theta$ is given by the pairing
 \begin{equation*}
     \underline{Ext}^1(E,E) \times \underline{Ext}^1(E,E) \to \underline{Ext}^2(E,E)\cong \underline{Hom}(E,E) \cong \CO_{\M}
 \end{equation*}
 given by cup product and Serre duality.

 Similarly, as in \cite[6.11]{PMIHES_2022__135__337_0}, the tangent bundle of $\Hs$ is given by the kernel
 \begin{equation*}
    \ker(\underline{Ext}^1(p_1^*E,p_1^*E) \oplus \underline{Ext}^1(p_2^*E,p_2^*E) \to \underline{Ext}^1(p_1^*E,p_2^*E))
 \end{equation*}
 given by the difference of the two morphisms induces by the universal inclusion $p_1^*E \into p_2^*E$ over $\Hs$. Here the differentials of the projections $p_i$ are given by the two projections. Taking factor-wise cup product defines a pairing on this kernel with target
 \begin{equation*}
     \ker(\underline{Ext}^2(p_1^*E,p_1^*E) \oplus \underline{Ext}^2(p_2^*E,p_2^*E) \to \underline{Ext}^2(p_1^*E,p_2^*E)) \cong \CO_{\Hs}.
 \end{equation*}

 One sees that this pairing agrees with both pullbacks of $\theta$ to $\Hs$.
\end{proof}

Consider $\tilde b$ as above and the resulting smooth curve  $C_{\tilde b} \subset X_F$. Since it suffices to prove the desired independence of $\omega_{\tilde b}$ from $\chi$ after replacing $F$ by a finite extension, we may assume that there exists a point $c \in C_{\tilde b}(F)$. We fix such a point. Since the two sheaves $p_i^*E$ on $\Hs$ differ only by a modification, the two compositions $\Hs \to \M \to B$ agree. We let $\Hs_b \subset \Hs$ be the fiber over $b$ of this map, and $\Hs_{b,c} \subset \Hs_b$ the fiber over $c$ under $p_X$. Then we can consider the diagram
\begin{equation*}
    \xymatrix{
& \Hs_{{\tilde b},c} \ar[d] \ar[ldd]_{\cong} \ar[rdd]^{\cong} & \\
& \Hs_{{\tilde b}}  \ar[ld]^{p_1} \ar[rd]_{p_2} & \\
\M_{\tilde b} & & \M_{\tilde b}    
    }
\end{equation*}
in which the two isomorphisms realize the isomorphism $\M_{\tilde b} \to \M_{\tilde b}$ of $\Pic^{0}_{C_{\tilde{b}}}$-torsors given by modification at $c$ which shifts the degree by one. Using Lemma \ref{ThetaPullbacks} this gives the desired independence of $\omega_b$ from the degree.
\bibliographystyle{amsalpha}
\bibliography{master}
\end{document}